\definecolor{red}{rgb}{1.0,0.0,0.0}
\definecolor{blu}{rgb}{0.0,0.0,1.0}
\definecolor{gre}{rgb}{0.03,0.50,0.03}
\definecolor{amethyst}{rgb}{0.6, 0.4, 0.8}
\definecolor{blue-violet}{rgb}{0.54, 0.17, 0.89}
\definecolor{darkviolet}{rgb}{0.58, 0.0, 0.83}
\def\dvio#1{{\textcolor{darkviolet}{#1}}}
\newtheorem{theorem}{Theorem}[section]
\newtheorem{remark}[theorem]{Remark}
\newtheorem{assumption}[theorem]{Assumption}
\newtheorem{lemma}[theorem]{Lemma}
\newtheorem{proposition}[theorem]{Proposition}
\newtheorem{corollary}[theorem]{Corollary}
\def \calo{{\mathcal O}}
\def \cA{{\mathcal A}}
\def \cF{{\mathcal F}}
\def \cL{{\mathcal L}}
\def \cC{{\mathcal C}}
\def \cS{{\mathcal S}}
\def \cI{{\mathcal I}}
\def \E{\mathsf{E}}
\def \P{\mathsf{P}}
\def \R{\mathbb{R}}
\def \F{\mathbb{F}}
\def \N{\mathbb{N}}
\def \eps{\varepsilon}
\title[Optimal dividend under stochastic discounting]{Optimal Dividend Payout \\ under Stochastic Discounting}
\author[Bandini, De Angelis, Ferrari, Gozzi]{Elena Bandini, Tiziano De Angelis, Giorgio Ferrari, Fausto Gozzi}
\keywords{Optimal dividend, stochastic interest rates, CIR model, singular control, optimal stopping, free boundary problems}
\subjclass[2010]{91G50, 93E20, 60G40, 35R35; JEL {\em Classification}. G11.}
\address{E.~Bandini: Universit\`a degli Studi di Milano-Bicocca, Via R. Cozzi 55, 20125 Milano, Italy.}
\email{\href{mailto:elena.bandini@unimib.it}{elena.bandini@unimib.it}}
\address{T.~De Angelis: School of Management and Economics (Dept.\ ESOMAS), University of Turin, C.so Unione Sovietica 218bis, 10134, Turin (Italy); Collegio Carlo Alberto, Piazza Arbarello 8, 10122, Turin (Italy); }
\email{\href{mailto:tiziano.deangelis@unito.it}{tiziano.deangelis@unito.it}}
\address{G.~Ferrari: Center for Mathematical Economics, Bielefeld University, Universit\"atsstrasse 25, 33615 Bielefeld, Germany.}
\email{\href{mailto:giorgio.ferrari@uni-bielefeld.de}{giorgio.ferrari@uni-bielefeld.de}}
\address{F.~Gozzi: LUISS Guido Carli, Viale Romania, 32, 00197 Roma, Italy.}
\email{\href{mailto:fgozzi@luiss.it}{fgozzi@luiss.it}}
\date{\today}
\numberwithin{equation}{section}
\begin{document}

\begin{abstract}
Adopting a probabilistic approach we determine the optimal dividend payout policy of a firm whose surplus process follows a controlled arithmetic Brownian motion and whose cash-flows are discounted at a stochastic dynamic rate. Dividends can be paid to shareholders at unrestricted rates so that the problem is cast as one of singular stochastic control. The stochastic interest rate is modelled by a Cox-Ingersoll-Ross (CIR) process and the firm's objective is to maximize the total expected flow of discounted dividends until a possible insolvency time.

We find an optimal dividend payout policy which is such that the surplus process is kept below an endogenously determined stochastic threshold expressed as a decreasing {\color{black}{continuous}} function $r\mapsto b(r)$ of the current interest rate value. We also prove that the value function of the singular control problem solves a variational inequality associated to a second-order, non-degenerate elliptic operator, with a gradient constraint.
\end{abstract}

\maketitle


\section{Introduction}

\subsection{The Problem}

{\color{black}{In this paper we solve an optimal dividend problem with stochastic discounting. In our model, the company pays dividends to shareholders at unrestricted rates and any dividend payment instantaneously reduces the company's surplus. The aim is to maximize the total expected discounted return of dividend payments, up to a possible insolvency time. We assume that dividends are discounted exponentially at a stochastic rate given by a deterministic nondecreasing and nonnegative function $\rho$ of the short interest rate $R$. As we also discuss in Remark \ref{rem:discount}, when $\rho(R)=R$ such a discounting force might be justified, e.g., by thinking that the company discounts at the cost of equity which, in a risk-neutral world, coincides with the risk-free interest rate according to the capital asset pricing model. Alternatively, looking at the company as a dividend paying security in a complete financial market, the stochastic discount factor can be then interpreted as a classical deflator process. Accordingly, the company's value is given by the total expected discounted flow of dividends.
In classical optimal dividend models the discount rate is often deterministic (and constant), so that shareholders are only exposed to risks arising from the random profitability of the firm (see also Section \ref{se:relLit} below).}} On the contrary, in our setting shareholders are also exposed to uncertainty from the wider macro-economic activity via random fluctuations in the interest rate.
\bigskip

From a mathematical point of view, we model the previous problem as a \emph{two-dimensional singular stochastic control problem}. The two coordinates of the state process are the surplus process and the short interest rate. The surplus process evolves as a Brownian motion $(Z^D_t)_{t\geq0}$ with drift $\mu$ and volatility $\sigma$, which is linearly controlled {via} a nondecreasing stochastic process $(D_t)_{t\geq0}$ representing the cumulative amount of distributed dividends. The uncontrolled short interest rate $(R_t)_{t\geq0}$ {enters into} the exponential discount factor appearing in the expected return of {dividend} payments. The process $(R_t)_{t\geq0}$ is assumed to be independent of the surplus' process, and to follow a mean-reverting dynamics specified by the Cox-Ingersoll-Ross (CIR) model. We require that the coefficients of the CIR process fulfill the so-called Feller condition (see \eqref{eq:hpCIR} below), so that the short interest rate is strictly positive at any time with probability one. The discount rate at time $t$ is of the form $\rho(R_t)$ (hence, total discounting up to time $t$ is $e^{-\int_0^t \rho(R_s) ds}$), for some nonnegative and nondecreasing function $\rho$ satisfying suitable growth conditions (see Assumption \ref{ass:rho} below). Notice that our requirements on $\rho$ are such that the cases of constant and linear discounting forces (i.e., like $\rho(r)=\rho_0>0$ or $\rho(r)=r$ for all $r \in \mathbb{R}_+$) are included in our setting. The aim is to maximize the total expected discounted value of dividends, up to the random time $\tau^D:=\inf\{t\geq 0: Z^D_t \leq \alpha\}$, for a given and fixed solvency level $\alpha\ge 0$. If $\alpha = 0$ we find the classical bankruptcy condition for this kind of models.

\subsection{Methodology and Results}
\label{sec:intromethod}

The key challenge in our work arises from the two-dimensional (non-degenerate) diffusive nature of the set-up. Indeed, dynamic programming ideas link the stochastic control problem to a variational problem involving an elliptic partial differential equation (PDE) with gradient constraint that is not amenable to an explicit solution. This stands in contrast with some of the more classical versions of the same problem where the state process is purely one-dimensional (see \cite{JS} for an early formulation and, for example, \cite{LOKKA} and \cite{SethiTaksar} among more recent contributions). Indeed, the dynamic programming equation arising in one-dimensional problems involves an ordinary differential equation (ODE) so that a so-called guess-and-verify approach can be implemented. The latter consists of an educated guess on the structure of the problem's solution, leading to an ODE for the value function with suitable boundary conditions (usually involving {\em smooth-fit}). The ODE can be solved explicitly and a verification theorem allows to prove that such solution is indeed the value function of the problem. That approach fails in our set-up since explicit solutions are not available.

\smallskip

In order to solve our two-dimensional optimal dividend problem, here we follow ideas developed in \cite{DeAE17} and later extended in \cite{DeA}. We link the optimal dividend problem to an auxiliary problem of optimal stopping whose underlying process is a two-dimensional reflecting diffusion $(R,K)$ and whose payoff increases upon each new reflection of $(R,K)$, but it is discounted with the same stochastic dynamic rate as in the original dividend problem.
In both \cite{DeAE17} and \cite{DeA} the interest rate is constant although the state-space is two-dimensional. In \cite{DeAE17} the problem is set on a finite-time horizon but the diffusive dynamics only affects one state variable. In \cite{DeA} the time-horizon is infinite but there is partial information that leads to the same Brownian motion driving a two-dimensional SDE (hence degenerate). On the contrary, here we have a fully two-dimensional diffusive set-up so that the construction of the auxiliary optimal stopping problem is different to those in \cite{DeAE17} and \cite{DeA} (e.g., here it preserves the stochastic discounting) and the subsequent analysis of the optimal dividend policy must follow a different line of argument. In particular, the use of a stochastic discount rate with CIR dynamics leads to numerous technical complications. These arise, e.g., in the proof of a preliminary verification theorem for the dividend problem (Theorem \ref{thm:verif}), as well as in showing boundedness and regularity of the value in the optimal stopping problem (Propositions \ref{prop:U-bound} and \ref{prop:C1}). Also it is worth noticing that the dynamic programming equation in \cite{DeAE17} and \cite{DeA} involves a one-dimensional parabolic PDE, while in our problem we have a two-dimensional elliptic PDE.

\smallskip

In the auxiliary optimal stopping problem that we consider (see the beginning of Section \ref{sec:OS}), the state variable consists of the original short interest rate $R$ appearing in the discount factor, and of a Brownian motion $K$ with drift {$\mu$} and volatility $\sigma$, which is reflected at the solvency level $\alpha$. By making use of almost exclusively probabilistic arguments, we show that the optimal stopping time is expressed in terms of the hitting time of the process $t\mapsto K_t$ to a (stochastic) moving boundary $t\mapsto b(R_t)$, where $b$ is a nonincreasing and {\color{black}{continuous function on $[0,\infty)$ whose properties are collected in Lemma \ref{lem:bnew}, Theorem \ref{thm:bC} and Proposition \ref{prop:bC}.}} Moreover, using that the underlying process $(R,K)$ is {a strong Feller process} and that the boundary points are regular (in the probabilistic sense) for the stopping region, we can show (Proposition \ref{prop:C1}) that the value function $U$ of the stopping problem is everywhere continuously differentiable (see also \cite{DeAPe17} for general results in this direction).

The smoothness of the function $U$ allows to construct the value function $V$ of the dividend problem by a simple integration
(formula \eqref{eq:v} in Section \ref{sec:solution}) and provides nice regularity properties for $V$. Indeed, as a function of the state variables $(r,z)$ associated to the process $(R,Z^D)$, the mapping $(r,z)\mapsto V(r,z)$ is globally $C^1$, with second order derivatives $\partial_{zz}V$ and $\partial_{rz}V$ that are continuous everywhere. Furthermore, the second order derivative $\partial_{rr}V$ is locally bounded in the whole space and continuous away from the boundary $z=b(r)$ with well-defined limits up to the boundary (Propositions \ref{prop:vrr} and \ref{prop:C2}). A direct approach to the variational problem with gradient constraint for the function $V$ is involved, especially because of an additional boundary condition along the solvency level, i.e.\ $V(r,\alpha)=0$ (see, e.g., \cite{G13,G14,G15}). In this respect, our probabilistic approach overcomes the difficulties arising in the PDE arguments.

The main result of the paper is Theorem \ref{thm:verifico} which, thanks to the verification Theorem \ref{thm:verif} and to the regularity results mentioned above, {links} the value functions $U$ and $V$ and provides an optimal dividend strategy as a Skorokhod reflection of the process $t\mapsto Z^D_t$ below the stochastic boundary $t\mapsto b(R_t)$. {\color{black}{The structure of the optimal dividend policy is discussed in Section \ref{rem:secfinal}, and numerical illustrations of the free boundary and of the {value function $U$ for the} optimal stopping problem are presented in Section \ref{sec:num}. }}

\subsection{Related Literature}
\label{se:relLit}

The first version of an optimal dividend problem was formulated by Bruno de Finetti in 1957 in \cite{deFinetti}. De Finetti proposed to measure the value of an insurance company in terms of the discounted value of its future dividend payments. Since then the optimal dividend problem has been studied extensively and it has become a cornerstone of the modern Mathematical Finance/Actuarial Mathematics literature. Early contributions addressing the dividend problem via control-theoretic techniques include, e.g., \cite{JS}, where the authors consider several problem formulations, including controls with bounded-velocity and singular controls (see also \cite{RS}, which appeared in the same years). A broad class of infinite-time horizon singular control problems for one-dimensional diffusions, inspired by the optimal dividend problem, were analysed in \cite{Shreveetal} who obtained general formulae. Numerous extensions and refinements of those early models have appeared in the literature; here we only mention a few of them and our review is certainly not exhaustive. For example, in \cite{Cad} the cash reserve has a mean-reverting dynamics and lump sum dividend payments are made at optimally chosen discrete dates (i.e., impulsive controls are considered); \cite{Reppeneatal} studies a model with stochastic drift in the dynamics of the company's surplus process; in \cite{Belhaj} the surplus process evolves as a jump-diffusion so that the company faces two types of liquidity risk: a Brownian risk and a Poisson risk. On an infinite-time horizon, \cite{LOKKA} allows capital injections in order to avoid company's bankruptcy, whereas \cite {Ferrari19} considers a general diffusive model with ``forced'' capital injections (see also \cite{FerrariSchuhmann} for the finite-time horizon version). In the series of papers \cite{G13,G14,G15} the author solves the optimal dividend problem with finite-time horizon by means of purely PDE methods, whereas \cite{DeAE17} addresses the problem probabilistically. Additional references can be also found in the review \cite{Avanzi} and in the book \cite{Schmidli}.

More closely related to our work are the papers considering stochastic discounting, many of which have appeared in recent years. In a discrete-time setting, the analysis is typically considered in the context of risk models for insurance companies (see, e.g., \cite{XieZou10} and the more recent \cite{TanEtAl15}). In continuous-time we find, e.g., \cite{Soneretal} and \cite{Pistorius} where the wealth process is a drifted Brownian motion and the interest rate is modulated by a continuous-time Markov chain (more recently \cite{Jiang} extends \cite{Pistorius} to the case of a jump-diffusive surplus process). Fixed-point methods are adopted in \cite{Pistorius} and \cite{Jiang}, whereas dynamic programming ideas appear in \cite{Soneretal}.

The papers \cite{Eisenberg1} and \cite{EisenbergMishura} consider discounting factors of the form $e^{-U_t}$. In \cite{Eisenberg1} the process $(U_t)_{t\geq0}$ is either a drifted Brownian motion or an integrated Ornstein-Uhlenbeck process, while it is a CIR process in \cite{EisenbergMishura}. It is worth noticing that the CIR process in \cite{EisenbergMishura} does not mean-revert to a finite value but explodes as $t$ diverges to infinity, in order to guarantee a finite value of the problem. With such specifications of the discount factor, the nature of the optimal dividend problems considered in \cite{Eisenberg1} and \cite{EisenbergMishura} is very different from ours. In our paper indeed it is the {\em discount rate} - and not the cumulative discounting force - that takes a mean-reverting CIR dynamics. At the technical level, when $(U_t)_{t\geq0}$ in \cite{Eisenberg1} is a Brownian motion with drift, a change of measure allows a reduction to a one-dimensional diffusive set-up. When $(U_t)_{t\geq0}$ is an integrated Ornstein-Uhlenbeck process a viscosity characterization of the value function is provided but without an optimal dividend policy. In \cite{EisenbergMishura}, explicit solutions are obtained when the surplus process is deterministic; the case of a stochastic surplus is instead investigated only in a regime of small volatility. Extensions of \cite{Eisenberg1} to the case in which $(U_t)_{t\geq0}$ is a L\'evy process can be found in \cite{Cheng17}, \cite{EisenbergKrunher}, and \cite{Jiang18}.

Compared to the existing literature we provide a detailed analysis of the value function and of the optimal dividend policy in a two-dimensional diffusive setting, under very mild assumptions on the discount rate (cf.\ Assumption \ref{ass:rho} below), and under the Feller condition \eqref{eq:hpCIR} that guarantees strictly positive interest rates.


\subsection{Plan of the paper}

The rest of the paper is organized as follows. In Section \ref{sec:setting} we set up the problem and prove a preliminary verification theorem. The auxiliary optimal stopping problem is studied in Section \ref{sec:OS}, while in Section \ref{sec:solution} we construct the value function of the optimal dividend problem together with its optimal dividend strategy. {\color{black}{Finally, Section \ref{rem:secfinal} contains a financial discussion on the optimal dividend policy {which is accompanied by} numerical illustrations presented in Section \ref{sec:num}. Section \ref{sec:corrBM} {discusses possible extensions to a model including correlation between the interest rate and the company's surplus processes}.}}


\section{Problem Setting and Verification Theorem}
\label{sec:setting}

\subsection{Problem Formulation and Assumptions}
We consider a probability space $(\Omega,\cF,\P)$ that carries two \emph{independent} Brownian motions $(B_t)_{t\ge0}$ and $(W_t)_{t\ge0}$. We denote by $\F:= (\cF_t)_{t \geq 0}$ the filtration generated by $(B,W)$ and augmented with $\P$-null sets.
We fix $\alpha\ge 0$, representing a minimum capital requirement, and we assume that the cash reserve (or surplus) of a company follows the controlled dynamics
\begin{align}
\label{eq:Z}
Z^D_t=z+\mu\,t+\sigma B_t-D_t,\qquad t\ge 0,
\end{align}
where $\mu \in \R$, $\sigma >0$, $z\ge \alpha$, and $(D_t)_{t\ge0}$ is right-continuous and nondecreasing. Indeed, $D_t$ denotes the total amount of dividends paid to the shareholders up to time $t$. The set of admissible cumulative dividend payments is given by
\begin{align}
\label{cA}
\cA:=\{D\,:&\,\text{$D$ is $\mathbb{F}$-adapted, nondecreasing, right-continuous and such that,}\notag\\
&\,\text{setting $D_{0-}=0$, we have $D_t-D_{t-}\le Z^D_{t-}-\alpha$, $\forall t\ge 0$, $\P$-a.s.}\}.
\end{align}
In the rest of the paper we denote by $Z^0$ the {dynamics of $Z^D$} with $D \equiv 0$.

The interest rate follows a CIR dynamics and, in particular, we have, for all $t\ge0$,
\begin{align}\label{eq:R}
d R_t = k(\theta- R_t) \,dt +\gamma\,\sqrt{R_t} \,d W_t,\quad	R_0=r\ge 0,
\end{align}
where $k$, $\theta$ and $\gamma$ are fixed constants. We assume the so-called Feller condition
\begin{equation}
\label{eq:hpCIR}
2k\theta\ge \gamma^2
\end{equation}
so that $R_t>0$, $\P$-a.s.~for all $t>0$ (see, e.g., \cite[p.\ 357 and Section 6.1.3]{JYC}).
In what follows we find sometimes convenient to use the notation $R_t^r$ for the interest rate process that starts at time zero from $R_0=r$. Similarly, we denote by $Z^{z,D}_t$ the surplus process started at time $0-$ (i.e., before any dividend payment) from the level $z\ge\alpha$, and by $Z^{z,0}_t$ the process $z+\mu t+\sigma B_t$. Accordingly, we will denote by $\P_{r,z}$ the probability measure on $(\Omega, \mathcal{F})$ such that $\P_{r,z}(\,\cdot\,)=\P(\,\cdot\,|R_0\!=\!r,\,Z^D_{0-}\!=\!z)$, and we define $\E_{r,z}$ the corresponding expected value. Also, $\E_{r}$ will denote the expectation under $\P_{r}(\,\cdot\,)=\P(\,\cdot\,|R_0\!=\!r)$ and $\E_{z}$ the expectation under $\P_{z}(\,\cdot\,)=\P(\,\cdot\,|Z^D_{0-}\!=\!z)$.

We assume that the firm's manager discounts dividends at a rate $\rho$ that depends on the current level of the interest rate.
The manager aims at maximizing the total expected discounted flow of dividends up to a possible insolvency time of the firm. Then the value function of the problem reads
\begin{align}
\label{eq:Vbis}
V(r,z):=\sup_{D\in\cA}\E_{r,z}\left[\int_{0-}^{\tau_\alpha^D}e^{-\int_0^t\rho(R_t)dt}dD_t\right],
\end{align}
where, for any $D \in \cA$, the random time horizon
\begin{equation}
\label{tausolv}
\tau^D_\alpha:=\inf\{t\ge0\,:\,Z^D_t\le \alpha\}
\end{equation}
enforces the solvency requirement $Z^D_t\ge \alpha$ for all $t\ge 0$. The notation $0-$ in the integral means that we include a possible jump $D_0-D_{0-}\le z-\alpha$ at time zero. If $\alpha=0$ we recover the classical bankruptcy condition for this kind of models (see, e.g., \cite[Chapter 2, Section 2.5]{Schmidli}).

The following assumptions on the discount rate will be standing.
\begin{assumption}\label{ass:rho}
The discount rate $\rho:\R_+\to \R_+$ is a continuous function. Moreover
\begin{itemize}
\item[(i)] it is nondecreasing;
\item[(ii)] there exist two non-negative constants $c_1$ and $c_2$ such that $c_1+c_2>0$ and $\rho(r)\ge c_1+c_2\, r$ for $r\ge0$;
\item[(iii)]
there exists $c_3>0$ and $q \in \N$ such that, for $r_1>r_2\ge 0$,
\begin{equation}\label{eq:rholip}
\rho(r_1)-\rho(r_2)\le c_3(1+r_1^q)(\sqrt{r_1}-\sqrt{r_2}).
\end{equation}
\end{itemize}
\end{assumption}

\smallskip
\begin{remark}
\label{rem:rho}
We observe that (i) and (ii) of Assumption \ref{ass:rho} above will be used to prove all the results below.
\begin{itemize}
  \item
Condition (i) enables to obtain monotonicity properties of the value function.
  \item
Condition $(ii)$ is a mild requirement which allows us to deal with the (possibly) infinite horizon in Problem \eqref{eq:Vbis}.
\end{itemize}
Assumption \ref{ass:rho}-(iii) above is only needed in order to prove the $C^1$ property of Proposition \ref{prop:C1}; hence all the results obtained  before Proposition \ref{prop:C1} do actually hold without Assumption \ref{ass:rho}-(iii). {\color{black}{Furthermore, notice that Assumption \ref{ass:rho}-(iii) is satisfied if $\rho$ is such that $ 0\leq \rho(r_1)-\rho(r_2) \le \bar{c}_3(1+r_1^{\bar{q}})(r_1-r_2)$, for some $\bar{c}_3 >0$, $\bar{q} \in \N$ and for any $r_1 \geq r_2 \geq 0$.}}

Observe also that condition \eqref{eq:rholip} is verified, e.g., when
$\rho \in C^1(\mathbb{R}^+)$ and there exist $C>0$ and $q\in\mathbb{N}$ such that $\rho'(r) \leq C\big(1 + r^q\big)$ for any $r\ge0$.
\\
Finally, notice that $(i)+(ii)+(iii)$ is consistent with reasonable models for the discount rate, including $\rho(r)=r$ and $\rho(r)=\text{const.}$, which are canonical.
\end{remark}

{\color{black}{
\begin{remark}
\label{rem:discount}
As already discussed in the {\em Introduction}, the canonical case $\rho(r)=r$ {has various economic/financial interpretations}. For example, we might think that the company evaluates the risk-adjusted present value of each future dividend by discounting it at the cost of equity. In a risk-neutral world, the latter {cost} coincides with the risk-free interest rate, according to the capital asset pricing model \cite{Sharpe}. Alternatively, the discount factor can be thought of as a classical deflator process, if we interpret the company's value as the fair price of a {dividend paying} security in a complete financial market (see, e.g., Sections 6L and 6M in \cite{Duffie}).

In this paper, for the sake of mathematical generality, we take a generic $\rho$ satisfying Assumption \ref{ass:rho}. {That allows an interpretation of the model in which discounting is understood as an ``opportunity cost''. In this interpretation the personal time-preferences of a representative shareholder are linked to the financial market's evolution and, in particular, to the interest paid by an alternative form of investment in a `safe' asset, like a  bond}. {Determining} the structural form of agents' time preferences is a fundamental problem in experimental economics related to utility theory. {A} definitive answer has not been obtained yet {and} we refer to the reviews \cite{Frederik,Harrison} for experimental methods and findings.
\end{remark}
}}

For frequent future use we recall that for any $\beta>0$ one has (see, e.g., \cite{JYC}, Corollary 6.3.4.3, p.\ 362)
\begin{equation}
\label{Laplace}
\E_{r}\Big[e^{-\beta\int_0^{t}R_u du}\Big] = e^{-A_{\beta}(t) - r G_{\beta}(t)},
\end{equation}
with
\begin{equation}\label{GA}
\begin{split}
&G_{\beta}(t)\!:=\! \frac{2\beta\left(e^{\eta_{\beta}\,t}-1\right)}{\eta_{\beta}\left(e^{\eta_{\beta}\,t}+1\right) + k \left(e^{\eta_{\beta}\,t}-1\right)},\\
&A_{\beta}(t)\!:=\! -\frac{2k\theta}{\gamma^2}\ln\left[\frac{2\eta_{\beta} e^{(\eta_{\beta} + k)\frac{t}{2}}}{(\eta_{\beta}+ k)\left(e^{\eta_{\beta}\,t}-1\right) + 2\eta_{\beta}}\right],
\end{split}
\end{equation}
and $\eta_{\beta} := \sqrt{k^2 + 2\,\gamma^2\beta}$.

\subsection{Verification Theorem}
\label{sec:verifThm}

The infinitesimal generator $\cL$ of the pair $(Z^0,R)$ is defined by its action on twice-continuously differentiable functions $f$ as
\begin{align}\label{eq:L}
(\cL f)(r,z) :=\frac{1}{2} \,\sigma^2\, f_{zz}(r,z) + \mu\, f_z(r,z) + \frac{1}{2} \,\gamma^2\,r\, f_{rr}(r,z) + k(\theta- r)\, f_r(r,z),
\end{align}
where we adopt the notation $f_r:=\tfrac{\partial}{\partial r}f$, $f_z:=\tfrac{\partial}{\partial z}f$, $f_{rr}:=\tfrac{\partial^2}{\partial r^2}f$, $f_{rz}:=\tfrac{\partial^2}{\partial r\partial z}f$ and $f_{zz}:=\tfrac{\partial^2}{\partial z^2}f$.

The financial intuition suggests that the firm's manager is more likely to pay dividends when the firm performs well. We thus expect that for each value $r$ of the interest rate, there should be a critical value of the surplus process, such that dividends are paid if $z$ is larger than such a value. Motivated by this intuition and by the idea that a dynamic programming principle should also hold, we formulate the following verification theorem.

For the ease of notation we introduce the sets
\[
\mathcal{O}:=(0,\infty)\times(\alpha,\infty)\:\:\text{and}\:\:\overline{\mathcal{O}}:=[0,\infty)\times[\alpha,\infty).
\]
{\color{black}{Moreover, for an interval $(x_1,x_2)$ of the real line, we adopt the convention that $(x_1,x_2)=\varnothing$ whenever $x_2 \leq x_1$.}}

\begin{theorem}
\label{thm:verif}
{\color{black}{Let Assumption \ref{ass:rho}
and condition \eqref{eq:hpCIR} hold.}}
Assume that there exists functions $a:(0,+\infty)\to [\alpha,+\infty)$ and $v:\overline{\mathcal{O}}\to\R_+$ with the following properties.
\begin{itemize}
	\item [(i)]The mapping $r\mapsto a(r)$ is right-continuous and non-increasing.
\item [(ii)]
The function $v$ is such that $v\in C^1(\mathcal{O})\cap C(\overline{\mathcal{O}})$ with $v_{zz},v_{rz}\in C(\mathcal{O})$
and $v_{rr}\in L^{\infty}_{loc}(\mathcal{O})
\cap C(\bar{\cI} \cap {\mathcal{O}} )$,
where
\begin{align}
\label{eq:I}
\cI:= \{(r,z)\in \mathcal{O}: v_z(r,z) >1\}.
\end{align}
\item[(iii)]The couple $(v,a)$ solves the free-boundary problem
\begin{align} \label{HJB1}
	\left\{
	\begin{array}{ll}
	\cL v(r,z) - \rho(r)\, v(r,z)\leq 0,\quad & {\text{a.e.}\:(r,z) \in \mathcal{O}}\\[+3pt]
	\cL v(r,z) - \rho(r)\, v(r,z)=0,\quad & \alpha < z < a(r),
 \,r > 0\\[+3pt]
	v_z(r,z)  > 1,\quad &\alpha < z < a(r), \,r > 0\\[+3pt]
	v_z(r,z)  = 1,\quad &  z \geq a(r), \,r > 0\\[+3pt]
	v(r, \alpha) =0, \quad  &r \geq 0.
	\end{array}
	\right.
	\end{align}
\end{itemize}
Then, $v\ge V$ on $\overline{\mathcal{O}}$.	
	
In addition, if $v(r,z)\le c(z-\alpha)$ 	
for all $(r,z)\in \overline{\mathcal{O}}$ and some $c>0$, then for every $(r,z)\in \overline{\mathcal{O}}$ we have $v(r,z)=V(r,z)$
and the process
\begin{align}\label{eq:Oc}
D^a_t:=\sup_{0\le s\le t}\left[Z^{z,0}_s-a(R^r_s)\right]^+\,,\qquad t\geq 0,
\end{align}
{\color{black}{with $D^a_{0-}=0$,}} is optimal at $(r,z)$; i.e.,
\begin{equation}
\label{eq:v=Vnew}
v(r,z)=V(r,z)=\E_{r,z}\left[\int_{0-}^{\tau_\alpha^{D^a}}
e^{-\int_0^t\rho(R_t)dt}dD^a_t\right].
\end{equation}
\end{theorem}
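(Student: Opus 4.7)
The plan is a classical verification argument based on a generalized It\^o--Meyer formula applied to the candidate $v$, combined with the HJB inequalities in \eqref{HJB1}. I fix an admissible $D\in\cA$ and an initial point $(r,z)\in\overline{\mathcal{O}}$, and denote $\tau^*_n:=\tau^D_\alpha\wedge \tau_n\wedge n$, where $\tau_n$ is a localizing sequence that keeps $(R^r,Z^{z,D})$ inside a compact subset of $\mathcal{O}$ (in particular bounded away from $r=0$, which is possible $\P$-a.s.\ by the Feller condition \eqref{eq:hpCIR}) and on which $v_{rr}$ is bounded. Because $v_{zz}$ and $v_{rz}$ are continuous and $v_{rr}\in L^\infty_{loc}$, I would approximate $v$ by mollifications in $r$, apply the classical It\^o formula to the smoothed versions, and pass to the limit via dominated convergence to obtain an It\^o--Krylov-type expansion of $t\mapsto e^{-\int_0^t\rho(R^r_s)ds}v(R^r_t,Z^{z,D}_t)$.

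The resulting decomposition contains: (a) a drift term driven by $\cL v-\rho v$, non-positive a.e.\ by the first line of \eqref{HJB1}; (b) a continuous dividend term $-\int_0^\cdot e^{-\int_0^s\rho du}v_z(R^r_s,Z^{z,D}_{s-})\,dD^c_s$, bounded above by $-\int_0^\cdot e^{-\int_0^s\rho du}\,dD^c_s$ since $v_z\ge 1$ everywhere; (c) jump contributions which by the fundamental theorem of calculus satisfy $v(R^r_t,Z^{z,D}_t)-v(R^r_t,Z^{z,D}_{t-})=-\int_{Z^{z,D}_t}^{Z^{z,D}_{t-}}v_z(R^r_t,y)\,dy\le -\Delta D_t$; and (d) local martingale pieces killed by localization upon taking expectation. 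Using $v(r,\alpha)=0$ and $v\ge 0$, rearranging yields
\[
v(r,z)\ge \E_{r,z}\!\left[\int_{[0,\tau^*_n]}\!e^{-\int_0^t\rho(R_s)ds}dD_t + e^{-\int_0^{\tau^*_n}\rho(R_s)ds}v(R^r_{\tau^*_n},Z^{z,D}_{\tau^*_n})\mathbf{1}_{\{\tau^*_n<\tau^D_\alpha\}}\right].
\]
Monotone convergence handles the dividend integral; for the remainder term one uses Assumption~\ref{ass:rho}-(ii) together with \eqref{Laplace} to ensure that $\E_r[e^{-\int_0^t\rho(R_s)ds}]$ decays fast enough to absorb the growth of $v$ on the unbounded compacts. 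This gives $v\ge V$ (the linear bound on $v$ is not needed at this stage, since Fatou suffices for the non-negative remainder).

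To obtain equality for $D=D^a$, I first verify admissibility of \eqref{eq:Oc}: right-continuity and monotonicity come from the running supremum, and since $a\ge\alpha$ the constraint $Z^{z,D^a}\ge\alpha$ with the required size bound on jumps is automatic. By construction $D^a$ is the Skorokhod reflection of $Z^{z,0}$ beneath the right-continuous non-increasing barrier $a(R^r)$, so $Z^{z,D^a}_t\le a(R^r_t)$ for all $t$, and $D^a$ increases only when $Z^{z,D^a}_t=a(R^r_t)$. On the event $\{Z^{z,D^a}_t<a(R^r_t)\}$ we have $\cL v-\rho v=0$ by the second line of \eqref{HJB1}, so term (a) vanishes along the trajectory; on the support of $dD^a$ we have $v_z=1$ by the fourth line, so terms (b) and (c) contribute exactly $-dD^a_t$. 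Repeating the It\^o argument with $D=D^a$ turns all inequalities into equalities, and the hypothesis $v(r,z)\le c(z-\alpha)$ forces the remainder term to vanish as $n\to\infty$: indeed $\E[Z^{z,D^a}_t]\le z+\mu t$, while \eqref{Laplace} combined with $\rho(r)\ge c_1+c_2 r$ provides an exponential decay of the discount factor that beats such polynomial growth. This yields \eqref{eq:v=Vnew}.

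I expect the main obstacle to be the rigorous application of It\^o's formula under the weak regularity $v_{rr}\in L^\infty_{loc}$ coupled with the degeneracy of the CIR diffusion coefficient $\gamma^2 r$ at $r=0$: the mollification scheme must be chosen so that the approximations retain uniform bounds after multiplication by $r$, and the localizing stopping times $\tau_n$ must keep $R^r$ away from $0$ without losing probability mass -- the Feller condition \eqref{eq:hpCIR} is exactly what makes this feasible. A secondary but delicate point is the transversality estimate as $n\to\infty$, which requires carefully combining the linear bound on $v$, the growth of $Z^{z,D}$, and the exponential decay provided by \eqref{Laplace}; this is standard but must be checked uniformly, in particular to justify Fatou in the first half of the argument and dominated convergence in the second.
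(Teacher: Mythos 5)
Your proposal takes essentially the same verification route as the paper: mollify $v$ to handle $v_{rr}\in L^\infty_{loc}$, localize to a compact subset of $\mathcal{O}$ bounded away from $\{r=0\}$ (justified by the Feller condition \eqref{eq:hpCIR}), apply Dynkin's formula to the mollified functions and pass to the limit, exploit $v_z\ge1$ together with the HJB inequalities, and conclude by monotone convergence, with the linear bound $v(r,z)\le c(z-\alpha)$ and the CIR Laplace formula \eqref{Laplace} entering only in the transversality estimate for the candidate $D^a$. The key structural points — admissibility of $D^a$ via the Skorokhod reflection beneath $a(R^r)$, vanishing drift in the continuation region, $v_z=1$ on the support of $dD^a$, and the observation that the one-sided inequality $v\ge V$ needs no linear bound since the remainder is nonnegative and can simply be dropped — all match the paper's proof.
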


\begin{proof}
\textbf{Part 1: Proof that $v\ge V$ on $\overline{\mathcal{O}}$.}

We start arguing as in \cite{FleSo}, Chapter VIII, Theorem 4.1. More precisely, for each $k \geq 1$, we introduce the standard mollifier {\color{black}{$\phi_k(r,z)= k^{-2}\phi(kr,kz)$
with $\phi \in C_c^\infty(B_1(0))$, $\phi \geq 0$, $\int_{\R^2} \phi(r,z) drdz=1$ (where $B_1(0)$ is the ball in $\R^2$ centered in zero with radius one), so that  $\phi_k(r,z) \in C_c^\infty(B_{1/k}(0))$.}} Then we define $(v^k)_{k \geq 1}\subset C^{\infty}(\overline{\mathcal{O}})$ by convolution as  $v^k := v \ast \phi_k$. Thanks to the regularity assumptions on $v$,  for any compact set $K \subset \mathcal{O}$
 we have
\begin{align}
	&\lim_{k \rightarrow \infty}||v^k -v||_{L^\infty(K)}=0, \label{conv_vk}\\
	&\lim_{k \rightarrow \infty}||v_z^k -v_z||_{L^\infty(K)}=0,
	\qquad\!\! 	\lim_{k \rightarrow \infty}||v_r^k -v_r||_{L^\infty(K)}=0\label{conv_vkzvkr},\\
&\lim_{k \rightarrow\infty}||v_{zz}^k -v_{zz}||_{L^\infty(K)}=0, \quad  \lim_{k \rightarrow\infty}||v_{rz}^k -v_{rz}||_{L^\infty(K)}=0. \label{conv_vkzz}
\end{align}
In general $v_{rr}^k$ will not converge to $v_{rr}$ uniformly on every compact subset of $\mathcal O$, since $v_{rr}$ is not continuous. Therefore we cannot expect that $\mathcal L v^k$ converges to
$\mathcal L v$ uniformly on compact sets.
However, by the definition of weak derivative and since $v_{rr} \in L^{\infty}_{loc}(\mathcal{O})$, we have $(v^k)_{rr}=(v_{rr} \ast \phi_k)$. Then, thanks to the continuity of the coefficients in $\mathcal L$ we have
\begin{equation}\label{convLvk}
\lim_{k \rightarrow \infty} ||(\mathcal L v^k)-[(\mathcal L v)\ast \phi_k]||_{L^{\infty}(K)} =0,
\end{equation}
for every compact $K\subset \calo$, using that the minimal distance from $K$ to $\mathcal O$ is strictly positive.
Recalling that $\cL v - \rho(\cdot)\, v\leq 0$ a.e.\ in $\mathcal O$, then it also holds that $(\cL v - \rho(\cdot)\, v)\ast \phi_k\leq 0$ everywhere in $\mathcal O$. Hence \eqref{convLvk} yields
\begin{equation}\label{est_genrho}
\limsup_{k \rightarrow \infty}	\,\sup_{(r,z)\in K}\, (\cL v^k - \rho(r)  v^k)(r,z)\leq 0.
\end{equation}

Let now $(r,z)\in \mathcal{O}$ be given and fixed, and consider an arbitrary admissible dividend strategy $D \in \mathcal{A}$.
 For $0<\varepsilon < z-\alpha$,  set
\[
\eta^{Z^D}_\eps:=\inf\{t\geq 0: \alpha \leq Z^{z,D}_t \leq \alpha \!+\! \varepsilon\}.
\]
Notice that when $\tau^D_\alpha(\omega)=0$ (recall that $\tau^D_\alpha$ is defined in \eqref{tausolv}) also $\eta^{Z^D}_\eps(\omega)=0$ for every $\eps \in(0,z-\alpha)$. Moreover, if $\tau^D_\alpha(\omega)>0$, for every $\delta >0$ sufficiently small we have
$$
\inf_{0\le t\le \tau^D_\alpha(\omega)-\delta} Z^{z,D}_t(\omega)>\alpha,
$$
hence for every $\delta>0$ we find $\eps>0$
such that
$$
\inf_{0\le t\le \tau^D_\alpha(\omega)-\delta} Z^{z,D}_t(\omega)>\alpha+\eps
\; \Longrightarrow \;
\tau^D_\alpha(\omega)-\delta\le \eta^{Z^D}_{\varepsilon}(\omega)
\le \tau^D_\alpha(\omega).
$$
Since $\eta^{Z^D}_{\varepsilon}(\omega)$ is increasing in $\eps$
we conclude that $\eta^{Z^D}_{\varepsilon}(\omega) \uparrow \tau_{\alpha}^D(\omega)$,  $\P_{r,z}$ a.s., as
$\varepsilon \downarrow 0$.

Let us also define
\begin{align*}
\tau^{Z^D}_\eps:=\inf\left\{t \geq 0: \, Z_t^{z,D} \geq  \frac{1}{\eps}\right\}, \quad \eta^R_\eps:=\inf\left\{t\ge 0: R^r_t \notin\left(\eps, \frac{1}{\eps}\right)\right\},
\end{align*}
and
\begin{align*}
\vartheta^D_{\eps}:=\eta^{Z^D}_\eps\wedge \eta_\eps^R \wedge \tau^{Z^D}_\eps.
\end{align*}
We have  $\vartheta^D_{\eps}=\inf\{t \geq 0: \, (R_t^r,Z_t^{z,D}) \notin K_\eps\}$, where  $K_{\eps}= (\eps,\frac{1}{\eps})\times  (\alpha + \varepsilon, \frac{1}{\eps})$.
Since $+\infty$ is unattainable for the processes $R$ and $Z^{D}$ and $0$ is unattainable for $R$, we also have
$\vartheta^D_{\varepsilon} \uparrow \tau_{\alpha}^D$ $\P_{r,z}$ a.s., as $\varepsilon \downarrow 0$.

Let us now  fix $t>0$.
The Dynkin formula applied to the process $e^{-\int_0^{s}\rho(R_u)du}v^k(R_{s},Z^D_{s})$ on the (random) time interval $[0,\vartheta^D_{\varepsilon} \wedge t]$ gives

\begin{align}
\label{Ito2}
 v^k(r,z) =&\, \E_{r,z}\Big[e^{-\int_0^{\vartheta^D_{\varepsilon}\wedge t}\rho(R_u)du}v^k(R_{\vartheta^D_{\varepsilon}\wedge t},Z^D_{\vartheta^D_{\varepsilon}\wedge t})\Big] \\
& - \E_{r,z}\bigg[\int_{0}^{\vartheta^D_{\varepsilon}\wedge t}e^{-\int_0^{s}\rho(R_u)du}\big(\mathcal{L}-\rho(R_s)\big)v^k(R_s,Z^D_s) ds\bigg]    \nonumber\\
& + \E_{r,z}\bigg[\int_{0}^{\vartheta^D_{\varepsilon}\wedge t}e^{-\int_0^{s}\rho(R_u)du}v_z^k(R_s,Z^D_s) dD^c_s\bigg]  \nonumber \\
& - \E_{r,z}\Big[
\sum_{0\leq s \leq \vartheta^D_{\varepsilon}\wedge t}
e^{-\int_0^{s}\rho(R_u)du}\big(v^k(R_s,Z^D_s) - v^k(R_s,Z^D_{s-})\big)\Big], \nonumber
\end{align}
where $D^c$ denotes the continuous part of $D$ and
the final sum is non-zero only for (at most countably many) times $s$ such that
$\Delta D_s:=D_s - D_{s-}>0$.
Notice  that
\begin{align*}
& \sum_{0\leq s \leq \vartheta^D_{\varepsilon}\wedge t}
e^{-\int_0^{s}\rho(R_u)du}\big(v^k(R_s,Z^D_s) - v^k(R_s,Z^D_{s-})\big) \nonumber \\
& = - \sum_{0\leq s \leq \vartheta^D_{\varepsilon}\wedge t}
e^{-\int_0^{s}\rho(R_u)du}\,\int_0^{\Delta D_s} v^k_z(R_s,Z^D_{s-}-y)dy.
\end{align*}
Since $(Z_s^{z,D}, R_s^r)_{0 \leq s\leq \vartheta^D_{\varepsilon}\wedge t} \in {\overline K_\eps}$, using \eqref{conv_vk}-\eqref{conv_vkzvkr}-\eqref{conv_vkzz} and \eqref{est_genrho}, \eqref{Ito2} we obtain, sending $k\to +\infty$,
 \begin{align}
\label{Ito3}
 v(r,z) \geq&\, \E_{r,z}\Big[e^{-\int_0^{\vartheta^D_{\varepsilon}\wedge t}\rho(R_u)du}v(R_{\vartheta^D_{\varepsilon}\wedge t},Z^D_{\vartheta^D_{\varepsilon}\wedge t})\Big]  \\
& + \E_{r,z}\bigg[\int_{0}^{\vartheta^D_{\varepsilon}\wedge t}e^{-\int_0^{s}\rho(R_u)du}v_z(R_s,Z^D_s) dD^c_s\bigg]  \nonumber \\
& + \E_{r,z}\Big[
\sum_{0\leq s \leq \vartheta^D_{\varepsilon}\wedge t}
e^{-\int_0^{s}\rho(R_u)du}\,\int_0^{\Delta D_s} v_z(R_s,Z^D_{s-}-y)dy\Big]. \nonumber
\end{align}
Recalling that $v_z\ge 1$ on $\mathcal{O}$ by \eqref{HJB1} (hence $v\ge 0$ too, since $v(r,\alpha)=0$ for any $r\ge 0$) we obtain from \eqref{Ito3} that
\begin{align}
\label{Ito4First}
 v(r,z) &\geq  \E_{r,z}\Big[e^{-\int_0^{\vartheta^D_{\varepsilon}\wedge t}\rho(R_u)du}v(R_{\vartheta^D_{\varepsilon}\wedge t},Z^D_{\vartheta^D_{\varepsilon}\wedge t})\Big] + \E_{r,z}\bigg[\int_{0}^{\vartheta^D_{\varepsilon}\wedge t}e^{-\int_0^{s}\rho(R_u)du}\, dD_s\bigg] \\
& \geq
\E_{r,z}\bigg[\int_{0}^{\vartheta^D_{\varepsilon}\wedge t}e^{-\int_0^{s}\rho(R_u)du}\, dD_s\bigg]. \nonumber
\end{align}
Then, we can take limits first as $t \uparrow \infty$, and then as $\varepsilon \downarrow 0$, and employ monotone convergence to obtain
\begin{equation}
\label{Ito4}
v(r,z) \geq \E_{r,z}\bigg[\int_{0}^{\tau_{\alpha}^D}e^{-\int_0^{s}\rho(R_u)du}\, dD_s\bigg].
\end{equation}
Since $v\in C(\overline{\mathcal O})$ and $r\mapsto \rho(R^r_t)$ is {$\P$-a.s.}\ continuous and nondecreasing, an application of Fatou's lemma also gives
\begin{align*}
v(0,z)=\lim_{r\downarrow 0}v(r,z)\ge&\, \E_{z}\bigg[\int_{0}^{\tau_{\alpha}^D}\liminf_{r\downarrow 0}e^{-\int_0^{s}\rho(R^r_u)du}\, dD_s\bigg]\\
= &\E_{z}\bigg[\int_{0}^{\tau_{\alpha}^D}e^{-\int_0^{s}\rho(R^0_u)du}\, dD_s\bigg],
\end{align*}
upon noticing that $\tau_\alpha^D$ is independent of $r$. {Finally, we also have $v(r,\alpha) = 0 = V(r,\alpha)$, where the second equality is by definition of $V$.}

{Thus} \eqref{Ito4} is true for any $D\in \mathcal{A}$ and for any $(r,z)\in [0,\infty)\times{[\alpha,\infty)}$ and we conclude that $v \geq V$ on $\overline{\mathcal{O}}$.
\vspace{0.25cm}

\textbf{Part 2: Proof of $v=V$ and \eqref{eq:v=Vnew}}. We divide this part of the proof into three steps.
\vspace{+3pt}

\textbf{Step 1.}
Fix $(r,z)\in [0,+\infty)\times (\alpha,+\infty)$. We are going to prove that the process $D^a$
in \eqref{eq:Oc} belongs to $\cA$ and, $\P_{r,z}$-a.s.,
\begin{equation}
\label{Sk}
Z^{D^a}_t \leq a(R_t)\quad \text{for all}\quad 0\le t\le \tau^{D^a}_\alpha.
\end{equation}
Moreover, we show the Skorokhod minimality condition:
\begin{align}\label{eq:Sk2}
\int_{0}^{\tau^{D^a}_\alpha} \mathds{1}_{\{Z^{D^a}_{t-} < a(R_t)\}}\,dD^a_t = \sum_{0\le t\le \tau^{D^a}_\alpha}\int^{\Delta D^a_t}_0\mathds{1}_{\{Z^{D^a}_{t-}-\zeta < a(R_t)\}}d\zeta=0.
\end{align}

To prove these facts observe first that $D^a$ is by construction  $\F$-adapted and nondecreasing.
Moreover, by definition
of $D^a$ we easily get, for $0\le t\le \tau^{D^a}_\alpha$,
$$
D^a_t-D^a_{t-}=\max\{0, (Z^0_t-a(R_t))^+ -D^a_{t-}\}
=\max\{0,Z^{D^a}_{t-}-a(R_t)\}
\le Z^{D^a}_{t-}-\alpha,
$$
where in the last inequality we used that $a \ge \alpha$. The second equality above also implies
\[
Z^{D^a}_{t-}-\Delta D^a_t=\min\{Z^{D^a}_{t-},a(R_t)\},
\]
which guarantees that the second integral in \eqref{eq:Sk2} equals zero.
Condition \eqref{Sk} follows by definition of $D^a$, upon noticing that
$$
Z^{D^a}_t= Z^0_t-D^a_t\le a(R_t)\quad  \textup{for} \,\,0\le t\le \tau^{D^a}_\alpha, \quad \P_{r,z}\textup{-a.s.}
$$

It remains to show that $D^a$ is right-continuous and that the first integral in \eqref{eq:Sk2} is also zero. Fix $\omega \in \Omega$ (outside of a null set so that $t\mapsto (Z^0_t(\omega),R_t(\omega))$ are continuous) {and let $t\in(0,\tau^{D^a}_\alpha(\omega)]$ be} {\color{black}{such that $Z^{D^a}_{t-}(\omega) = Z^{0}_t(\omega) - D^a_{t-}(\omega) < a(R_t(\omega))$. {Since} $D^a$ is nondecreasing, we have $Z^{D^a}_{t}(\omega) =Z^{0}_t(\omega) - D^a_{t}(\omega) < a(R_t(\omega))$}}, i.e.\ $Z^{0}_t(\omega) - a(R_t(\omega)) < D^a_t(\omega)$. Recalling that $r\mapsto a(r)$ is right-continuous and non-increasing, then it is also lower semi-continuous. Hence $t\mapsto Z^0_t(\omega) - a(R_t(\omega))$ is upper semi-continuous. Then there exists some $\varepsilon:=\varepsilon(\omega,t)>0$ such that
$$\sup_{s\in[t,t+\varepsilon]}\big[Z^{0}_{s}(\omega) - a(R_{s}(\omega))\big]^+ \leq D^a_t(\omega).$$
It thus follows that for all $s\in [t,t+\varepsilon]$ we have
\begin{equation}\label{Daineq}
D^a_s(\omega) = D^a_t(\omega) \vee \sup_{u\in(t,s]}\big[Z^{0}_{u}(\omega) - a(R_{u}(\omega))\big]^+ = D^a_t(\omega).
\end{equation}
In particular, this proves the right-continuity of $D^a$, so that the process $D^a$ belongs to $\cA$.
Moreover, since \eqref{Daineq} holds for any $0<t \leq \tau^{D^a}_\alpha(\omega)$ such that $Z^{D^a}_{t-}(\omega)<a(R_t(\omega))$, the first integral in \eqref{eq:Sk2} is zero.

The above implies that the {triple} $(Z^{D^a},R,D^a)$ solves the Skorokhod reflection problem for the process $(Z^0,R)$ (with reflecting direction $(-1,0)$) in the set $\{\alpha\le z<a(r), \, r\ge 0\}$,
seen as a relatively open\footnote{Note that this set is open since $a$ is right-continuous.} subset of the orthant $\overline{\mathcal{O}}$.
By construction, the process cannot jump into the set $\{\alpha\le z<a(r), \, r\ge 0\}$. Indeed jumps are allowed only at points of left discontinuity of $a$ (hence
when the boundary $\{z=a(r)\}$ contains a vertical segment) and cannot go out of this boundary.
\vspace{+3pt}

\textbf{Step 2.} Here we show that $v=V$.
Fix $(r,z)\in \mathcal{O}$.
We know that \eqref{Ito2} holds for the special choice of control $D^a$.
The process $(Z^{D^a},R)$ is constrained to evolve in the set $\{\alpha\le z\le a(r), \, r\ge 0\}=\overline{\mathcal{I}}$ (cf.~\eqref{eq:I}), and $v_{rr}$ is assumed to be continuous therein.

{\color{black}{It follows that $(Z_s^{z,D^a}, R_s^r)_{0 \leq s\leq \vartheta^{D^a_{\varepsilon}}} \in {\overline{K_\eps \cap \mathcal I}}$}}
and, consequently, that $\mathcal L v^k\to \mathcal L v$ on ${\overline {K_\eps \cap \mathcal I}}$.
Exploiting the second {equation} in \eqref{HJB1} and the continuity of $\mathcal L, \rho, v$, this implies that the second term of the right hand side of \eqref{Ito2} converges to $0$ as $k\to \infty$.
The limit for the first, the third and the fourth term of \eqref{Ito2}
can be instead obtained as in Part 1, thus yielding \eqref{Ito3} with equality for the control $D^a$.
Now, recalling \eqref{Sk}, we see that the random measure $t\mapsto d D^a_t$ is supported on the (random) set of times $t\in[0,\tau^{D^a}_\alpha]$ for which $Z^{D^a}_{t-}\ge a(R_t)$; hence, using the fourth of \eqref{HJB1}, also the inequality of the first line of \eqref{Ito4First} becomes equality when $D=D^a$.

Hence, for $r>0$ we have
\begin{align}
\label{Ito3-bis}
v(r,z) = \E_{r,z}\bigg[e^{-\int_0^{\vartheta^{D^a}_{\varepsilon}\wedge t}\rho(R_u)du}v(R_{\vartheta^{D^a}_{\varepsilon}\wedge t},Z^{D^a}_{\vartheta^{D^a}_{\varepsilon}\wedge t})+\int_{0}^{\vartheta^{D^a}_{\varepsilon}\wedge t}e^{-\int_0^{s}\rho(R_u)du}\, dD^a_s\bigg]
\end{align}
and it remains to take limits as $t \uparrow \infty$ and $\varepsilon \downarrow 0$. Assume for a moment that
\begin{align}\label{eq:trans}
\lim_{\eps\downarrow 0}\lim_{t\uparrow \infty}\E_{r,z}\Big[e^{-\int_0^{\vartheta^{D^a}_{\varepsilon}\wedge t}\rho(R_u)du}v(R_{\vartheta^{D^a}_{\varepsilon}\wedge t},Z^{D^a}_{\vartheta^{D^a}_{\varepsilon}\wedge t})\Big]=0,
\end{align}
then the second term in \eqref{Ito3-bis} also converges by monotone convergence as in \eqref{Ito4} and we have
\[
v(r,z)=\E_{r,z}\bigg[\int_{0}^{\tau^{D^a}_\alpha}e^{-\int_0^{s}\rho(R_u)du}\, dD^a_s\bigg]\le V(r,z)
\]
for all $(r,z)\in\mathcal{O}$. By the result in Part 1 of the proof we conclude that $v=V$ on $\mathcal O$ and $v(r,\alpha)=V(r,\alpha)=0$ for all $r\ge 0$. The result extends to $r=0$ by recalling that $r\mapsto\rho(r)$ is nondecreasing (hence $\rho(R^r_t)\ge \rho(R^0_t)$ for all $t\ge 0$, $\P$-a.s.) and $v\in C(\overline{O})$. Indeed we have
\begin{align*}
V(0,z)\le v(0,z)=&\,\lim_{r\downarrow 0}v(r,z)=\lim_{r\downarrow 0}\sup_{D\in\cA}\E_{r,z}\bigg[\int_{0}^{\tau^{D}_\alpha}e^{-\int_0^{s}\rho(R_u)du}\, dD_s\bigg]\\
\le&\, \sup_{D\in\cA}\E_{0,z}\bigg[\int_{0}^{\tau^{D}_\alpha}e^{-\int_0^{s}\rho(R_u)du}\, dD_s\bigg]=V(0,z),
\end{align*}
where the first inequality was proven in Part 1 above and the second inequality also uses that the set $\cA$ {and the stopping time $\tau_\alpha^D$} do not depend on $r\ge 0$.
\vspace{0.25cm}

\textbf{Step 3.} In this step it only remains to prove \eqref{eq:trans}.
By using that, by assumption, $v(r,z) \leq c(z-\alpha)$ for some $c>0$, we have
\begin{align}
\label{stima1}
& \E_{r,z}\Big[e^{-\int_0^{\vartheta^{D^a}_{\varepsilon}\wedge t}\rho(R_u)du}v(R_{\vartheta^{D^a}_{\varepsilon}\wedge t},Z^{D^a}_{\vartheta^{D^a}_{\varepsilon}\wedge t})\Big]  \\
& \leq c\,\E_{r,z}\Big[e^{-\int_0^{\vartheta^{D^a}_{\varepsilon}}\rho(R_u)du}\big(Z^{D^a}_{\vartheta^{D^a}_{\varepsilon}}-\alpha\big)\mathds{1}_{\{\vartheta^{D^a}_{\varepsilon} < t\}}
\mathds{1}_{\{\vartheta^{D^a}_{\varepsilon} = \eta_\eps^{Z^{D^a}}\}}
\Big]\notag\\
&\quad+ c\,\E_{r,z}\Big[e^{-\int_0^{\vartheta^{D^a}_{\varepsilon}}\rho(R_u)du}\big(Z^{D^a}_{\vartheta^{D^a}_{\varepsilon}}-\alpha\big)\mathds{1}_{\{\vartheta^{D^a}_{\varepsilon} < t\}}
\mathds{1}_{\{\vartheta^{D^a}_{\varepsilon} \neq \eta_\eps^{Z^{D^a}}\}}
\Big]\notag\\
& \quad +  c\,\E_{r,z}\Big[e^{-\int_0^{t}\rho(R_u)du}\big(Z^{D^a}_{t}-\alpha\big)\mathds{1}_{\{\vartheta^{D^a}_{\varepsilon} \geq t\}}\Big] \notag\\
& \leq
c\varepsilon \P_{r,z}[\vartheta^{D^a}_{\varepsilon} = \eta_\eps^{Z^{D^a}}]
\nonumber\\
&\quad+ c\,\E_{r,z}\Big[e^{-\int_0^{\vartheta^{D^a}_{\varepsilon}}
\rho(R_u)du}
\big(z - \alpha + \mu \vartheta^{D^a}_{\varepsilon} + B_{\vartheta^{D^a}_{\varepsilon}}\big)
\mathds{1}_{\{\vartheta^{D^a}_{\varepsilon} < t\}}\mathds{1}_{\{\vartheta^{D^a}_{\varepsilon} \neq \eta_\eps^{Z^{D^a}}\}}\Big]\nonumber\\
&\quad+ c\,
\E_{r,z}\Big[e^{-\int_0^{t}\rho(R_u)du}\big(z-\alpha + \mu t + \sigma B_t\big)\mathds{1}_{\{\vartheta^{D^a}_{\varepsilon} \geq t\}
}\Big]\notag
\end{align}
where we have used that $Z^{D^a}_{\vartheta^{D^a}_{\varepsilon}} \leq \alpha + \varepsilon$ on the event $\{\vartheta^{D^a}_{\varepsilon} = \eta_\eps^{Z^{D^a}}\}$, as well as that $Z^{D^a}_{t} \leq Z^0_t=z + \mu t + \sigma B_t$ for all $t \geq 0$, by \eqref{eq:Z}.
We now estimate the last two terms of \eqref{stima1}.
For the third one, the independence of $B$ and $W$ and standard inequalities give
\begin{align}
\label{stima1bis}
&\E_{r,z}\Big[e^{-\int_0^{t}\rho(R_u)du}\big(z-\alpha + \mu t + \sigma B_t\big)\mathds{1}_{\{\vartheta^{D^a}_{\varepsilon} \geq t\}
}\Big]\\
&\le\big(z - \alpha + {\color{black}{|\mu|}} t + \E[|B_t|]\big)\E_{r}\Big[e^{-\int_0^{t}\rho(R_u)du}\Big]\nonumber\\
&\le(z - \alpha + {\color{black}{|\mu|}} t + \sqrt{t}) \E_{r}\Big[e^{-\int_0^{t}\rho(R_u)du}\Big].\notag
\end{align}

Now we look at the second term. Since $\vartheta^{D^a}_{\varepsilon}<t$ and $B$ and $W$ are independent, we have
\begin{align}
\label{stima1ter}
& \E_{r,z}\left[
e^{-\int_0^{t}\rho(R_u)du}
\big(z - \alpha + \mu \vartheta^{D^a}_{\varepsilon} + B_{\vartheta^{D^a}_{\varepsilon}}\big)
\mathds{1}_{\{\vartheta^{D^a}_{\varepsilon} < t\}}\mathds{1}_{\{\vartheta^{D^a}_{\varepsilon} \neq \eta_\eps^{Z^{D^a}}\}}\right]\\
& \le(z - \alpha + {\color{black}{|\mu|}} t) \E_{r}\Big[e^{-\int_0^{t}\rho(R_u)du}\Big] + \E_{r}\left[e^{-\int_0^{t}\rho(R_u)du}\right] \E\big[\sup_{0\le s \le t}B_{s}\big]
\nonumber\\
&
\le \E_{r}\Big[e^{-\int_0^{t}\rho(R_u)du}\Big] \Big( z - \alpha + {\color{black}{|\mu|}} t + 2 \sqrt{t}\Big),\nonumber
\end{align}
where the final inequality follows by Jensen's and Doob's inequalities for $B$.
Feeding \eqref{stima1bis} and \eqref{stima1ter} back into \eqref{stima1} we obtain, for a suitable constant $C>0$,
\begin{align}
\label{stima2}
\E_{r,z}&
\Big[e^{-\int_0^{\vartheta^{D^a}_{\varepsilon}\wedge t}\rho(R_u)du}v(R_{\vartheta^{D^a}_{\varepsilon}\wedge t},Z^{D^a}_{\vartheta^{D^a}_{\varepsilon}\wedge t})\Big] \\
\leq & c\varepsilon
+ C(z - \alpha + {\color{black}{|\mu|}}t + \sqrt{t}) \E_{r}\Big[e^{-\int_0^{t}\rho(R_u)du}\Big].\notag
\end{align}

We now distinguish two cases coming from Assumption \ref{ass:rho}-(ii). If $\rho(r)\geq c_1$ for any $r\geq 0$ and for some $c_1>0$ then \eqref{eq:trans} is immediately deduced from \eqref{stima2}.
If $\rho(r) \geq c_2 r$ for some $c_2>0$, then
$$
\E\Big[e^{-\int_0^{t}\rho(R^r_u)du}\Big] \leq
\E\Big[e^{-c_2 \int_0^{t} R^r_u du}\Big] =  e^{-A_{c_2}(t) - r G_{c_2}(t)},$$
where we used \eqref{Laplace} and \eqref{GA} for the equality.

Plugging the latter back into \eqref{stima2} we get
\begin{align*}
\E_{r,z}
\Big[e^{-\int_0^{\vartheta^{D^a}_{\varepsilon}\wedge t}\rho(R_u)du}v(R_{\vartheta^{D^a}_{\varepsilon}\wedge t},Z^{D^a}_{\vartheta^{D^a}_{\varepsilon}\wedge t})\Big] \leq  c\varepsilon
+ C(z - \alpha +{\color{black}{|\mu|}} t + \sqrt{t}) e^{-A_{c_2}(t) - r G_{c_2}(t)}
\end{align*}
and \eqref{eq:trans} holds since (cf.\ \eqref{GA}) $G_{c_2}(t)\ge 0$ and $A_{c_2}(t) \approx \frac{k\theta}{\gamma^2}(\eta_{c_2} - k)t$ for $t$ sufficiently large, with $\eta_{c_2}>k$.
\end{proof}

{\color{black}{
In the case $\mu\le 0$ it is intuitively clear that the firm's manager wants to liquidate the fund immediately, by paying dividends in a single transaction, i.e.~$D_0=z-\alpha$. {It is indeed immediate to check that for $\mu\le 0$ the couple $v(r,z)=z-\alpha$ and $a(r)\equiv \alpha$ satisfies (i)--(iii) in Theorem \ref{thm:verif}. Thus the next corollary holds as a simple application of the theorem.}

{
\begin{corollary}
Suppose that $\mu\le 0$. Then $V(r,z)=z-\alpha$ for any $(r,z) \in \overline{\mathcal{O}}$ and {the optimal dividend policy is given by} $(D^{\alpha}_t)_{t\geq0}$ such that $D^{\alpha}_{0-}=0$ and $D^{\alpha}_t=z-\alpha$ for $t\geq 0$.
\end{corollary}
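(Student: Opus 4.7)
The plan is to apply the verification Theorem \ref{thm:verif} directly to the candidate pair $v(r,z):=z-\alpha$ and $a(r)\equiv\alpha$, as suggested in the remark preceding the corollary. The bulk of the work is simply to check conditions (i)--(iii) of the theorem; the growth bound $v(r,z)\le c(z-\alpha)$ then trivially holds with $c=1$, so the theorem's stronger conclusion $v=V$ (together with optimality of $D^a$ defined by \eqref{eq:Oc}) applies.

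First, I would verify (i) and (ii) of Theorem \ref{thm:verif}: the map $r\mapsto a(r)\equiv\alpha$ is constant on $(0,\infty)$ with values in $[\alpha,\infty)$, hence right-continuous and non-increasing; and $v(r,z)=z-\alpha$ is smooth on $\overline{\mathcal{O}}$, so the regularity requirements on $v,v_{z},v_{r},v_{zz},v_{rz},v_{rr}$ are all trivially met (indeed $v_{rr}\equiv v_{rz}\equiv v_{zz}\equiv 0$). Note also that the set $\cI=\{(r,z)\in\mathcal{O}:v_z(r,z)>1\}$ defined in \eqref{eq:I} is empty because $v_z\equiv 1$, so the continuity requirement for $v_{rr}$ on $\bar{\cI}\cap\mathcal{O}$ is vacuous.

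Next I would verify the five conditions in \eqref{HJB1}. A direct computation gives $(\cL v)(r,z)=\mu$ since only $v_z=1$ is nonzero among the relevant derivatives. Therefore
\begin{equation*}
(\cL v)(r,z)-\rho(r)\,v(r,z)=\mu-\rho(r)(z-\alpha)\le 0\qquad\text{for every }(r,z)\in\mathcal{O},
\end{equation*}
using $\mu\le 0$, $\rho\ge 0$ by Assumption \ref{ass:rho}, and $z\ge\alpha$. This establishes the first inequality. The second equation in \eqref{HJB1} is required only on the interval $\alpha<z<a(r)=\alpha$, which is empty by the convention stated in the excerpt; the same applies to the third condition $v_z>1$. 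The fourth condition $v_z=1$ on $\{z\ge a(r)=\alpha\}$ holds by direct inspection, and the boundary condition $v(r,\alpha)=0$ is obvious.

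Having verified all hypotheses, Theorem \ref{thm:verif} yields $V(r,z)=v(r,z)=z-\alpha$ on $\overline{\mathcal{O}}$, and the optimal control is $D^a$ in \eqref{eq:Oc}. With $a\equiv\alpha$ we have $D^a_{0-}=0$ and $D^a_0=(Z^{z,0}_0-\alpha)^+=z-\alpha$, so that $Z^{D^a}_0=z-D^a_0=\alpha$ and hence $\tau^{D^a}_\alpha=0$. Consequently $D^a_t=z-\alpha$ for all $t\ge 0$, which is exactly the lump-sum liquidation policy $D^\alpha$ in the statement. I do not foresee any real obstacle: the only non-cosmetic step is the sign check for $\cL v-\rho v$, which relies crucially on the assumption $\mu\le 0$.
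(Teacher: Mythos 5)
Your proposal takes exactly the same route as the paper: the remark preceding the corollary explicitly says that one should check that $v(r,z)=z-\alpha$ and $a(r)\equiv\alpha$ satisfy (i)--(iii) of Theorem \ref{thm:verif}, and your verification of each condition (in particular the key sign check $\cL v-\rho v=\mu-\rho(r)(z-\alpha)\le0$ using $\mu\le0$) is the intended argument. One small slip at the end: from \eqref{eq:Oc} with $a\equiv\alpha$ we get $D^a_t=\sup_{0\le s\le t}[z+\mu s+\sigma B_s-\alpha]^+$, which equals $z-\alpha$ at $t=0$ but is a.s.\ strictly larger for $t>0$ (the running supremum of $\mu s+\sigma B_s$ is a.s.\ positive for any $t>0$ even when $\mu\le0$), so $D^a$ is \emph{not} identical to $D^\alpha$; the correct conclusion is that since $Z^{D^a}_0=\alpha$ forces $\tau^{D^a}_\alpha=0$, the two controls $D^a$ and $D^\alpha$ coincide on $[0,\tau^{D^a}_\alpha]$, and hence $D^\alpha$ (which is easily seen to lie in $\cA$) achieves the same value $z-\alpha=V(r,z)$ and is therefore also optimal.
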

}

As a consequence of the {corollary}, from now on we {require}:
\begin{assumption}
\label{ass:mu}
We have $\mu>0$.
\end{assumption}
{In the rest of the paper we shall always} assume that \eqref{eq:hpCIR} and Assumptions \ref{ass:rho} and \ref{ass:mu} hold without {further mention}.
}}


\section{An Auxiliary Two-Dimensional Optimal Stopping Problem}
\label{sec:OS}

{\color{black}{As we discussed in the Introduction (Subsection \ref{sec:intromethod}), in order to tackle our singular control problem we follow the approach taken in \cite{DeAE17}: (i) we guess a link between the dividend problem and an optimal stopping problem with value function $U$; (ii) we solve the latter by characterizing its optimal stopping boundary $b$; (iii) we go back to the original problem by showing that (cf.\ Theorem \ref{thm:verifico} in Section \ref{sec:solution})
\begin{align}\label{VintU}
V(r,z)=\int_\alpha^z U(r,y)dy,
\end{align}
and that the optimal stopping boundary $b$ of $U$ also triggers the optimal dividend policy (i.e.\ it plays the role of $a$ in \eqref{eq:Oc}).

The present section is devoted to introducing and studying the optimal stopping problem ``associated'' to our original optimal dividend problem. In the optimal stopping problem the underlying process consists of the interest rate process $R$ and of a reflecting diffusion $K$. Moreover, the stopping payoff increases upon each new reflection of $(R,K)$, but it is discounted with the same stochastic dynamic rate as in the original dividend problem. {The heuristic derivation of the connection between the dividend problem and the stopping problem is provided in Section \ref{sec:corrBM} following arguments originally developed in \cite[Section 3]{DeAE17} and later expanded in \cite{DeA}.

}}}

After {formulating the} optimal stopping problem, we divide this section into two parts. First, in Section \ref{sec:basicU} we provide basic properties of the stopping value function $U$ (monotonicity, boundedness and continuity, respectively in Lemma \ref{lem:U-monot}, Proposition \ref{prop:U-bound}, Proposition \ref{prop:Ucont}), which in turn allow us to show that $U$ solves a suitable free boundary problem (Corollary \ref{cor:fbp}).
Second, in Section \ref{sec:U-C1} we prove the global regularity of $U$ (i.e.\ even across the free boundary; cf.\ Proposition \ref{prop:C1}), and {three} additional results on a required boundary condition (Corollary \ref{cor:elb}) and on the {regularity of the} optimal stopping boundary ({\color{black}{Theorem \ref{thm:bC} and}} Proposition \ref{prop:bC}).

We denote {$\cF^B_\infty:=\sigma(B_t,\,t\ge0)$}.
For $t\geq 0$, let
\begin{align}
Y_t:=-\mu t + \sigma B_t, \quad
S_t:=\sup_{0\leq u \leq t}Y_u,\quad\text{and}\quad
K^z_t:=(z-\alpha) \vee S_t - Y_t{+\alpha}.\label{K}
\end{align}
When clear from the context, we will simply write $K_t$ instead $K_t^z$. Notice that, {the process $K$ is an arithmetic Brownian motion reflecting at $\alpha$ and,} according to the discussion at p.\ 2 of \cite{P06}, it is a Markov process. Then, setting
\begin{equation}\label{eq:deflambda}
\lambda=\frac{2\mu}{\sigma^2},
\end{equation}
we introduce the optimal stopping problem
\begin{align}
\label{eq:U2}
U(r,z) = \sup_{\tau \geq 0} \E\left[ e^{\lambda\,\left((z-\alpha) \vee S_\tau-(z-\alpha)\right) - \int_0^\tau \rho(R^r_s) \,ds}\right],\quad (r,z)\in\overline{\mathcal{O}},
\end{align} 	
where the optimization is taken over all the $\mathbb{F}^{K, W}$-stopping times,  where  $\F^{K, W} := (\mathcal F_t^{K, W})_{t \geq 0}$ is the filtration  generated by $K$ and $W$, augmented by the $\P$-null sets.
{Problem \eqref{eq:U2} is the one that we expect to be associated to the original optimal dividend problem via the formula \eqref{VintU} (see Section \ref{sec:corrBM} for details).}

\begin{remark}
\label{rem:OS2}
Due to the presence of the processes $S_t$ and $\int_0^t \rho(R_s^r) ds$ in the exponential of the gain process, the optimal stopping problem \eqref{eq:U2} may appear non-standard in our Markovian set-up. Indeed, the standard form of a Markovian problem involves the expectation of a function of a Markov process, stopped at a stopping time, while the process $S_t$ and $\int_0^t \rho(R_s^r) ds$ are not Markovian. We now show that \eqref{eq:U2} can be rewritten easily as a standard optimal stopping problem.

Denote $I^{i,r}_t:= i + \int_0^t \rho(R_s^r) ds$,
$Y^y_t:= y - \mu t + \sigma B_t$ and notice that $K^z_t + \dvio{Y_t{-\alpha}} = (z-\alpha) \vee S_t$ by \eqref{K} and that the process $(K,Y)$ is Markovian. Then, it is easy to see that for $U$ as in \eqref{eq:U2} we have
\begin{equation}
\label{eq:reprU2}
U(r,z) = e^{i-\lambda y} \widehat{U}(r,z,y,i),
\end{equation}
where $\widehat{U}$ is the value function of the standard optimal stopping problem
\begin{equation}
\label{eq:U2gen}
\widehat{U}(r,z,y,i) = \sup_{\tau \geq 0} \E\left[ e^{\lambda\,\left(K^z_{\tau} + {Y^{y-\alpha}_{\tau}} - (z-\alpha)\right) - I^{i,r}_{\tau}}\right],\quad (r,z,y,i)\in \overline{\mathcal{O}} \times \R \times \R_+,
\end{equation}
for the four-dimensional Markov process $(R_t,K_t,Y_t,I_t)_{t \geq 0}$. However, due to \eqref{eq:reprU2}, we can abandon the general standard formulation \eqref{eq:U2gen} and just consider a problem of optimal stopping for the process $(R_t,K_t)_{t \geq 0}$ rather than for the process $(R_t,K_t,Y_t,I_t)_{t \geq 0}$.
\end{remark}

\begin{remark}
\label{rem:S}
It is worth noticing that, for $r\ge0$,
\[
L_r:=\lim_{t \rightarrow \infty} \left(\lambda\, S_t - \int_0^t \rho(R^r_s) \,ds\right)\le \lambda\, S_\infty
\]
and by \cite[Sec. 3.5.C, Eq.~(5.13)]{KS}
\[
\P(S_\infty > x) = e^{-\lambda x}.
\]
Hence $\P(L_r = + \infty)\le \P(S_\infty=+\infty)=0$ for all $r\ge 0$, since $\mu>0$ (Assumption \ref{ass:mu}).
\end{remark}
From now on we focus on the study of problem \eqref{eq:U2}. We will then prove in Section \ref{sec:solution} how such an optimal stopping problem is related to the original optimal dividend problem.


\subsection{Basic Properties of $U$ and a Free Boundary Problem}
\label{sec:basicU}

It is not hard to verify that, $\P$-almost surely, the map
\begin{align}\label{eq:map}
(r,z)\mapsto \lambda\,[(z-\alpha) \vee S_\tau-(z-\alpha)] - \int_0^\tau \rho(R^r_s) \,ds
\end{align}
is nonincreasing in $z$. Moreover, using comparison theorems for \eqref{eq:R}, we also have that the map in \eqref{eq:map} is nonincreasing in $r$ since $\rho(\,\cdot\,)$  is nondecreasing. These facts imply the next simple result, whose proof is omitted for brevity.
\begin{lemma}\label{lem:U-monot}
The map $z\mapsto U(r,z)$ is nonincreasing for each $r\in\R_+$. Moreover, the map $r\mapsto U(r,z)$ is nonincreasing for each $z\in[\alpha,+\infty)$.
\end{lemma}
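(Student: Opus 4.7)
The plan is to establish pathwise monotonicity of the integrand appearing in \eqref{eq:U2} (jointly in $r$ and $z$), take expectations to preserve it, and then pass to the supremum over stopping times.

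For monotonicity in $z$, I would fix $r\ge 0$ and two values $\alpha\le z_1\le z_2$, and observe that on every trajectory
\[
(z_i-\alpha)\vee S_\tau-(z_i-\alpha)=\max\bigl(0,\,S_\tau-(z_i-\alpha)\bigr),\qquad i=1,2.
\]
The right-hand side is clearly nonincreasing in $z_i$ on each path, hence so is the integrand in \eqref{eq:map} (the discount term does not depend on $z$ at all). Since the exponential is increasing, the gain process
$e^{\lambda[(z-\alpha)\vee S_\tau-(z-\alpha)]-\int_0^\tau\rho(R^r_s)ds}$
is pointwise nonincreasing in $z$.

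For monotonicity in $r$, I would fix $z\ge\alpha$ and $0\le r_1\le r_2$, and appeal to the pathwise comparison theorem for one-dimensional SDEs with Hölder-continuous diffusion coefficient (e.g.\ Yamada--Watanabe) applied to \eqref{eq:R}: since the drift and diffusion coefficients are identical for the two initial conditions, one has $R^{r_1}_s\le R^{r_2}_s$ for all $s\ge 0$, $\P$-a.s. Because $\rho$ is nondecreasing by Assumption \ref{ass:rho}(i), this yields $\int_0^\tau\rho(R^{r_1}_s)\,ds\le\int_0^\tau\rho(R^{r_2}_s)\,ds$ $\P$-a.s.\ for every stopping time $\tau$, so the gain process is pointwise nonincreasing in $r$ as well (the first term inside the exponential does not depend on $r$).

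Finally, taking expectations preserves pathwise inequalities and the class of admissible stopping times (the $\mathbb{F}^{K,W}$-stopping times) is the same for the two starting points, up to noting that $\mathbb{F}^{K^z,W}\subseteq\mathbb{F}^{B,W}$ and that, by the Markovianity of $(R,K)$ (see Remark \ref{rem:OS2}), enlarging the filtration to $\mathbb{F}^{B,W}$ does not alter the value of the optimal stopping problem. Passing to the supremum therefore preserves both monotonicities and delivers the claim. There is essentially no serious obstacle here: the only mild subtlety is the filtration dependence on $z$, which is resolved by the Markovian reformulation already spelled out in Remark \ref{rem:OS2}.
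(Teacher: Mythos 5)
Your proposal is correct and follows essentially the same route the paper indicates: pathwise monotonicity of the integrand in $z$ (immediate from $(z-\alpha)\vee S_\tau-(z-\alpha)=\max(0,S_\tau-(z-\alpha))$) and in $r$ (via the SDE comparison theorem for \eqref{eq:R} combined with Assumption \ref{ass:rho}(i)), then passing to expectations and the supremum. The extra observation about the $z$-dependence of the filtration and the Markovian reformulation of Remark \ref{rem:OS2} is a careful and valid addition, but not a departure from the paper's argument.
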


The next proposition gives us an important bound on $U$, and estimates obtained in its proof will be used several times in the rest of the paper. It is useful to introduce here the random variables
\begin{align}\label{eq:H}
H^r :=  1 +
\int_0^\infty e^{- c_2\int_0^t R^r_s \,ds} \,\lambda\,e^{\lambda\,S_t}\,d S_t
\end{align}
and
\begin{align}
\label{eq:Sp}
S^p:=\sup_{0\le t<\infty}\left(B_t -p t\right),
\end{align}
where $p:= \mu/\sigma+c_1\sigma/2\mu$ and the constants $c_1,c_2\ge0$ are as in $(ii)$ of Assumption \ref{ass:rho}.
\begin{proposition}\label{prop:U-bound}
Recall $c_1$ and $c_2$ from $(ii)$ in Assumption \ref{ass:rho}. We have
\begin{align}\label{eq:U-bound}
0\le U(r,z)\le h_0,\qquad\text{for all $(r,z)\in\overline{\mathcal{O}}$,}
\end{align}
where
\begin{align*}
\text{$h_0:=\E\big[e^{\lambda\sigma S^p}\big]<+\infty$ if $c_1>0$ \qquad  and \qquad  $h_0:=\sup_{r\in\R_+}\E[ H^r ]<+\infty$ if $c_2>0$.}
\end{align*}
\end{proposition}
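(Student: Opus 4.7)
The plan is to prove the two assertions $0 \leq U \leq h_0$ separately; the lower bound is immediate since the integrand in \eqref{eq:U2} is positive. For the upper bound I would first use $S_0 = 0 \leq S_\tau$ to get $(z-\alpha)\vee S_\tau - (z-\alpha) \leq S_\tau$, which yields
\[
U(r,z) \leq \sup_\tau \E\!\left[e^{\lambda S_\tau - \int_0^\tau \rho(R_s^r)\,ds}\right],
\]
and then split according to whether $c_1 > 0$ or $c_2 > 0$ in Assumption \ref{ass:rho}(ii).

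For $c_1 > 0$: since $\rho \geq c_1$ it suffices to bound $\sup_\tau \E[e^{\lambda S_\tau - c_1 \tau}]$. A pathwise argument does it: letting $u^* \in [0,\tau]$ attain $S_\tau = Y_{u^*}$, and using $c_1 \geq 0$ and $u^* \leq \tau$,
\[
\lambda S_\tau - c_1\tau \leq \lambda Y_{u^*} - c_1 u^* \leq \sup_{u\geq 0}(\lambda Y_u - c_1 u) = \lambda\sigma\,S^p,
\]
where the final equality rewrites $\lambda Y_u - c_1 u = \lambda\sigma(B_u - p u)$ with $p = \mu/\sigma + c_1\sigma/(2\mu)$ as in \eqref{eq:Sp}. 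Since $S^p$ is exponentially distributed with parameter $2p$ and $2p > \lambda\sigma$ is equivalent to $c_1 > 0$, the moment generating function $\E[e^{\lambda\sigma S^p}] = 2p/(2p - \lambda\sigma)$ is finite, giving the claimed bound.

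For $c_2 > 0$: using $\rho(R_s^r) \geq c_2 R_s^r$ and setting $\Lambda_t := c_2\int_0^t R_s^r\,ds$ and $M_t := e^{\lambda S_t - \Lambda_t}$, the product rule (valid since $S$ and $\Lambda$ are continuous and of bounded variation) gives $dM_t = \lambda M_t\,dS_t - c_2 R_t^r M_t\,dt$. Since $M_s = e^{\lambda S_s}e^{-\Lambda_s}$ and the Lebesgue integral $\int_0^t c_2 R_s^r M_s\,ds$ is nonnegative, for every stopping time $\tau$ one obtains pathwise
\[
M_\tau \leq 1 + \int_0^\tau \lambda e^{\lambda S_s} e^{-\Lambda_s}\,dS_s \leq 1 + \int_0^\infty \lambda e^{\lambda S_s}e^{-\Lambda_s}\,dS_s = H^r\quad\text{a.s.},
\]
so $U(r,z) \leq \E[H^r]$. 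By the comparison theorem for the CIR SDE \eqref{eq:R}, $r\mapsto R_t^r$ is a.s.\ nondecreasing, hence $r\mapsto H^r$ is a.s.\ nonincreasing and $\sup_{r\geq 0}\E[H^r] = \E[H^0]$.

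The main technical obstacle is therefore showing $\E[H^0] < \infty$. The strategy is: integrate by parts pathwise in the definition of $H^0$ (the boundary term at infinity vanishes since $S_\infty < \infty$ a.s.\ by Remark \ref{rem:S} while $\Lambda_t \to \infty$ a.s.\ by ergodicity of CIR under \eqref{eq:hpCIR}), then apply Tonelli together with independence of $R$ and $B$ and the Laplace formula \eqref{Laplace} evaluated at $r=0$, to reach
\[
\E[H^0] = \int_0^\infty A_{c_2}'(t)\,e^{-A_{c_2}(t)}\,\E[e^{\lambda S_t}]\,dt.
\]
By \eqref{GA} and $\eta_{c_2} > k$, the factor $e^{-A_{c_2}(t)}$ decays exponentially in $t$, so it is enough to bound $\E[e^{\lambda S_t}]$ by an exponential with rate strictly smaller than $(k\theta/\gamma^2)(\eta_{c_2}-k)$. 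Since $\lambda^2\sigma^2/2 = \lambda\mu$, the process $(e^{\lambda Y_t})_{t\geq 0}$ is a positive martingale with $\sup_{u\leq t}e^{\lambda Y_u} = e^{\lambda S_t}$; applying Doob's $L^p$ inequality for any $p>1$ and Jensen's inequality gives $\E[e^{\lambda S_t}] \leq \tfrac{p}{p-1}\,e^{\lambda\mu(p-1)t}$, and choosing $p$ close enough to $1$ makes the resulting exponent strictly smaller than the decay rate of $A_{c_2}$, yielding convergence of the integral.
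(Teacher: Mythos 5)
Your proof is correct, and for the lower bound and the $c_1>0$ case it coincides with the paper's argument (the pathwise bound $\lambda S_\tau-c_1\tau\le\lambda\sigma S^p$ and the exponential law of $S^p$). For the $c_2>0$ case, however, you take a genuinely different route in two places. First, you observe via pathwise comparison for the CIR SDE that $r\mapsto H^r$ is a.s.\ nonincreasing, so $\sup_{r\ge 0}\E[H^r]=\E[H^0]$; the paper instead keeps $r$ inside (the factor $e^{-rG_{c_2}(t)}$) and merely discards it by $G_{c_2}\ge 0$. Your reduction is somewhat cleaner. Second, and more substantively, the paper estimates $f(t):=\E[e^{\lambda S_t}]$ by computing $f'(t)$ explicitly from the density of the first-passage time of drifted Brownian motion, which yields the sharp polynomial bound $f'(t)\le\frac{2\mu^2}{\sigma^2}+\frac{3\mu}{\sigma\sqrt{t}}$. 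You instead use that $(e^{\lambda Y_t})_{t\ge 0}$ is a positive martingale (precisely because $\lambda=2\mu/\sigma^2$), then Jensen ($L^1\hookrightarrow L^p$) and Doob's $L^p$ inequality give
\begin{equation*}
\E\big[e^{\lambda S_t}\big]\le\E\Big[\big(\sup_{s\le t}e^{\lambda Y_s}\big)^p\Big]^{1/p}\le\tfrac{p}{p-1}\,\E\big[e^{p\lambda Y_t}\big]^{1/p}=\tfrac{p}{p-1}\,e^{\lambda\mu(p-1)t},\qquad p>1.
\end{equation*}
This is a weaker (exponential-with-small-rate) growth bound than the paper's polynomial one, but it is exactly enough here: since $A'_{c_2}$ is bounded on $\R_+$ (it vanishes at $t=0$ and tends to $\frac{k\theta}{\gamma^2}(\eta_{c_2}-k)$ as $t\to\infty$) and $A_{c_2}(t)\sim\frac{k\theta}{\gamma^2}(\eta_{c_2}-k)t$, picking $p-1<\frac{k\theta(\eta_{c_2}-k)}{\gamma^2\lambda\mu}$ makes $\int_0^\infty A'_{c_2}(t)e^{-A_{c_2}(t)}\E[e^{\lambda S_t}]\,dt<\infty$. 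Your pathwise integration by parts (justified by $S_\infty<\infty$ a.s.\ and $\int_0^\infty R_s\,ds=\infty$ a.s.\ by positive recurrence of CIR under the Feller condition) gives the same identity as the paper after the Laplace-formula step, up to the harmless re-integration by parts that trades $\int e^{-A}f'\,dt$ for $\int A'e^{-A}f\,dt$. The only thing you lose relative to the paper is the sharper polynomial control of $\E[e^{\lambda S_t}]$, which the authors re-use in later estimates (e.g.\ in the proof of Lemma \ref{lem:bound}); for the present proposition the Doob bound is entirely sufficient.
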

\begin{proof}
The lower bound in \eqref{eq:U-bound} is trivial. For the upper bound instead we use Assumption \ref{ass:rho} to write
\begin{align*}
\E&\left[ e^{\lambda\,  ((z-\alpha)\vee S_\tau-(z-\alpha)) - \int_0^\tau \rho(R^r_s) \,ds}\right]\le \E\left[ e^{\lambda\,S_\tau - c_1\tau - c_2 \int_0^\tau R^r_s \,ds}\right].
\end{align*}
Now, if $c_1>0$ we have, by using \eqref{eq:deflambda},
\begin{align}
\label{eq:new}
U(r,z)&\le \sup_\tau \E\left[ e^{\lambda\,S_\tau - c_1\tau}\right]\le \E\big[e^{\lambda\sigma S^p}\big]\\
&=2p\!\int_0^\infty e^{\frac{2\mu}{\sigma}y}e^{-2p y}dy=2p\!\int_0^\infty e^{-\frac{c_1\sigma}{\mu} y}dy<+\infty,\notag
\end{align}
where we used that $\P(S^p> x)=\exp (-2p x)$ (see \cite[Sec. 3.5.C, Eq.~(5.13)]{KS}).

If instead $c_2>0$ (and in particular when $c_1=0$) calculations are a bit more involved. Noticing that the process $S$ is of finite variation, we first use an integration by parts to obtain
\begin{align}\label{eq:Ub1}
U(r,z)\le&\sup_\tau\E\left[ e^{\lambda\,S_\tau - c_2\int_0^\tau R^r_s \,ds}\right]\\
=& 1+\sup_\tau\E\left[\int_0^\tau e^{- c_2\int_0^t R^r_s \,ds} \,\lambda\,e^{\lambda\,S_t}\,d S_t-c_2\!\int_0^\tau e^{- c_2\int_0^t R^r_s \,ds} R^r_t\,e^{\lambda\,S_t}\,dt\right]\notag\\
\le& \E[ H^r ] \le \sup_{r\in\R_+}\E[ H^r ],\notag
\end{align}
where in the last inequality we used that $R^r_t\ge0$ for all $t\ge0$. It remains to prove that $h_0=\sup_{r\in\R_+}\E[ H^r ]<+\infty$.
Letting
\begin{equation}\label{HTr}
H^r_T:=\int_0^T e^{- c_2\int_0^t R^r_s \,ds} \,\lambda\,e^{\lambda\,S_t}\,d S_t
\end{equation}
we have {\color{black}$\E [H^r]=1+ \lim_{T\to\infty}\E [H^r_T]$} by monotone convergence. It is therefore sufficient to find a bound for $\E [H^r_T]$ which is independent of $T$ and $r$.
Using independence of $B$ and $W$, Fubini's theorem and explicit formulae for CIR model (see, e.g., \cite{JYC}, p.~361), we obtain
\begin{align}\label{eq:Ub2}
\E[ H^r_T ]=&\E\left[\E\left(\int_0^T  e^{- c_2\int_0^t R^r_s \,ds}\,\lambda\,e^{\lambda\,S_t}\,d S_t\,\Big|{\cF^B_\infty}\right)\right]\\
=&\E\left[\int_0^T \E\left( e^{- c_2\int_0^t R^r_s \,ds}\,\Big|{\cF^B_\infty}\right)\,\lambda\,e^{\lambda\,S_t}\,d S_t\right]\notag\\[+3pt]
=&\E\left[\int_0^T \E\left( e^{- c_2\int_0^t R^r_s \,ds}\right)\,\lambda\,e^{\lambda\,S_t}\,d S_t\right]\notag\\
=&\E\left[\int_0^T e^{-A_{c_2}(t)-rG_{c_2}(t)}\,\lambda\,e^{\lambda\,S_t}\,d S_t\right]\notag
\end{align}
where $G_{c_2}$ and $A_{c_2}$ are as in \eqref{GA} with $\beta = c_2$, and where $\eta_{c_2} := \sqrt{k^2 + 2\,\gamma^2 \,c_2}$. Setting $f(t):=\E[e^{\lambda S_t}]$, integrating by parts in \eqref{eq:Ub2}, using Fubini and undoing the integration by parts we get
\begin{align}
\nonumber\E[ H^r_T ]=&\,e^{-A_{c_2}(T)-rG_{c_2}(T)}f(T)-e^{-A_{c_2}(0)-rG_{c_2}(0)}- \int_0^T \!\!\E\big[e^{\lambda\,S_t}\big]d\big(e^{-A_{c_2}(t)-rG_{c_2}(t)}\big)\\
=&\int_0^Te^{-A_{c_2}(t)-rG_{c_2}(t)}f'(t)dt,
\label{eq:Ub3}\end{align}
where by Sec.~3.5.C in \cite{KS} (upon using equations (5.11) and (5.12) therein, and noticing that our $\P(S_t>b)$ is equal to $\P^{(-\mu)}(T_b\le t)$ in the notation of \cite{KS}) we have
\begin{align}
\label{eq:f}&f(t)=\int_0^\infty e^{\lambda\,z}\left(\int_0^t\frac{1}{\sqrt{2\pi\sigma^2s^3}}\Big(\frac{z+\mu s}{\sigma^2 s}z-1\Big)e^{-\frac{(z+\mu s)^2}{2\sigma^2 s}}ds\right)dz,\\[+3pt]
\label{eq:f'}&f'(t)=\frac{1}{\sqrt{2\pi\sigma^2t^3}}\int_0^\infty e^{\lambda\,z-\frac{(z+\mu t)^2}{2\sigma^2 t}}\Big(\frac{z+\mu t}{\sigma^2 t}z-1\Big)dz.
\end{align}
Recalling that $\lambda=2\mu/\sigma^2$, straightforward algebra gives
\[
\lambda z-\frac{(z+\mu t)^2}{2\sigma^2 t}=-\frac{(z-\mu t)^2}{2\sigma^2 t}.
\]
Changing variable in the integral \eqref{eq:f'} we obtain
\begin{align}\label{tonum}
f'(t)=&\frac{1}{\sqrt{2\pi\sigma^2t^3}}\int_0^\infty e^{-\frac{(z-\mu t)^2}{2\sigma^2 t}}\Big(\frac{z+\mu t}{\sigma^2 t}z-1\Big)dz\\[+3pt]
=&\frac{1}{t}\int_{-\mu t}^\infty\Big(\frac{y+2\mu t}{\sigma^2 t}(y+\mu t)-1\Big)\frac{1}{\sqrt{2\pi\sigma^2t}}e^{-\frac{y^2}{2\sigma^2 t}}dy\notag\\[+3pt]
=&\frac{1}{t}\E\left[\mathds{1}_{\{\sigma B_t\ge -\mu t\}}\Big(\frac{\sigma B_t+2\mu t}{\sigma^2 t}(\sigma B_t+\mu t)-1\Big)\right]\notag \\[+3pt]
=&\frac{1}{t}\E\left[\mathds{1}_{\{\sigma B_t\ge -\mu t\}}\Big(\frac{B^2_t}{t}-1+\frac{3\mu}{\sigma}B_t+\frac{2\mu^2}{\sigma^2}t\Big)\right]\notag\\[+3pt]
\le& \frac{2\mu^2}{\sigma^2}+\frac{3\mu}{\sigma\sqrt{t}}+\frac{1}{t}\E\left[\mathds{1}_{\{\sigma B_t\ge -\mu t\}}\Big(\frac{B^2_t}{t}-1\Big)\right]. \notag
\end{align}
The last term above may be evaluated as follows:
\begin{align}
\label{estimateBM}
\frac{1}{t}&\E\left[\mathds{1}_{\{\sigma B_t\ge -\mu t\}}\Big(\frac{B^2_t}{t}-1\Big)\right]=\frac{1}{t}\int^\infty_{-\frac{\mu t}{\sigma}}\frac{1}{\sqrt{2\pi t}}\Big(\frac{y^2}{t}-1\Big)e^{-\frac{y^2}{2t}}dy\\[+3pt]
=&\frac{1}{t}\left(\int_{-\frac{\mu t}{\sigma}}^\infty \frac{1}{\sqrt{2\pi t}}y\Big(-e^{-\frac{y^2}{2t}}\Big)'dy-\int_{-\frac{\mu t}{\sigma}}^\infty\frac{1}{\sqrt{2\pi t}} e^{-\frac{y^2}{2t}}dy\right)=
-\frac{\mu}{\sigma\sqrt{2\pi}}\frac{1}{\sqrt{t}}  e^{-\frac{\mu^2 t}{2\sigma^2}}<0,\nonumber
\end{align}
where, in the last equality, we have used the integration by parts.
Using \eqref{tonum}-\eqref{estimateBM} above in \eqref{eq:Ub3} we then conclude
\begin{align}\label{EHTr}
\E [H^r_T]\le& \int_0^Te^{-A_{c_2}(t)-rG_{c_2}(t)}\Big(\frac{2\mu^2}{\sigma^2}+\frac{3\mu}{\sigma\sqrt{t}}\Big)dt\\[+3pt]
\le& \int_0^\infty e^{-A_{c_2}(t)}\Big(\frac{2\mu^2}{\sigma^2}+\frac{3\mu}{\sigma\sqrt{t}}\Big)dt<+\infty,\notag
\end{align}
where the last integral is finite because $A_{c_2}(t)\approx \frac{k\theta}{\gamma^2}(\eta_{c_2}-k)t$ as $t\to\infty$, $\eta_{c_2}>k$, and $A_{c_2}(0)=0$.
\end{proof}

An important consequence of the proof of Proposition \ref{prop:U-bound} is that
\begin{align}\label{eq:sup}
\E\left[\sup_{0\le t<\infty}e^{\lambda [(z-\alpha) \vee S_t] -\int_0^t\rho(R^r_s)ds}\right]<+\infty,\quad\text{for all $(r,z)\in\overline{\mathcal{O}}$}.
\end{align}
Moreover, it is not hard to verify that the Markov process $(K_t, S_t, Y_t, R_t, \int^t_0\rho(R_s)ds)_{t\ge0}$ is also of Feller type. Then, \cite[Lemma 3, Sec.~3.2.3 and Lemma 4, Sec.~3.2.4]{S} guarantee that there exists a lower semi-continuous function $u$ which is the smallest superharmonic function larger than one (see Remark \ref{rem:OS2} for a detailed comparison with \cite{S}). Here, superharmonic refers to the property
\[
u(r,z)\ge \E\left[ e^{\lambda [(z-\alpha) \vee S_\tau{-(z-\alpha)}] -\int_0^\tau\rho(R^r_s)ds}u\big(R^r_\tau,K^z_\tau\big)\right]
\]
for any stopping time $\tau$ and any $(r,z)\in \overline{\mathcal {O}}$. Now, let us introduce the sets
\begin{align}
\label{eq:cC} \cC&:=\{(r,z)\in\overline{\mathcal{O}}\,:\,U(r,z)>1\},\\
\label{eq:cS} \cS &:=\{(r,z)\in\overline{\mathcal{O}}\,:\,U(r,z)=1\},
\end{align}
known in the literature as {\em continuation} and {\em stopping} sets, respectively. Thanks to \cite[Thm. 1, Sec. 3.3.1 and Thm. 3, Sec. 3.3.3]{S}, and the fact that $U$ is lower semi-continuous, we have that $U=u$ and that
\begin{align}\label{eq:tau*}
\tau_*:=\inf\{t\ge0\,:\, (R_t,K_t)\in \cS\}
\end{align}
is the smallest optimal stopping time for \eqref{eq:U2}, provided that $\P_{r,z}(\tau_*<+\infty)=1$, otherwise it is an optimal Markov time.  In some instances below we will stress the dependence on the data $(r,z)$ of the optimal stopping time, i.e.,
\begin{align}\label{eq:tau*bis}
\tau_*(r,z):=\inf\{t\ge0\,:\, (R^r_t,K^z_t)\in \cS\}.
\end{align}

Moreover, recalling again that $U$ is lower semi-continuous and given the process
\begin{align*}
\Lambda_t:=e^{\lambda\left((z-\alpha) \vee S_t-(z-\alpha)\right) -\int_0^t\rho(R_s)ds}U(R_t,K_t),\qquad t\ge0,
\end{align*}
then
\begin{align}\label{eq:supm}
&\text{$(\Lambda_t)_{t\ge0}$ is a $\P_{r,z}$-supermartingale}
\end{align}
and
\begin{align}\label{eq:m}
&\text{$(\Lambda_{t\wedge\tau_*})_{t\ge0}$ is a $\P_{r,z}$-martingale}
\end{align}
for all $(r,z)\in\overline{\mathcal{O}}$ (see \cite[Thm. 2.4, Sec. 2, Chapter I]{PS} or \cite[Sec.~3.4]{S}).
\vspace{+5pt}

Next we provide a technical lemma which is useful to prove continuity of $U$ later on.
\begin{lemma}\label{lem:UT}
For $n>0$, let us denote
\[
U^n(r,z)=\sup_{0\le \tau\le n}\E\left[ e^{\lambda\,\left((z-\alpha) \vee S_\tau-(z-\alpha)\right) - \int_0^\tau \rho(R^r_s) \,ds}\right],\quad (r,z)\in\overline{\mathcal{O}}.
\]
Then for all $(r,z)\in\overline{\mathcal{O}}$ we have
\[
\lim_{n\to\infty}U^n(r,z)=U(r,z).
\]
\end{lemma}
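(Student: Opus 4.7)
The plan is to show the two inequalities $\lim_{n\to\infty}U^n(r,z)\le U(r,z)$ and $\lim_{n\to\infty}U^n(r,z)\ge U(r,z)$ separately. The first one is immediate since every bounded stopping time $\tau\le n$ is admissible for $U$, so $U^n\le U$ for every $n$; moreover $n\mapsto U^n(r,z)$ is nondecreasing because the class of stopping times over which one optimises is increasing in $n$. Hence the monotone limit $\lim_{n\to\infty}U^n(r,z)$ exists and is bounded above by $U(r,z)$.

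For the reverse inequality, the strategy is a standard truncation combined with dominated convergence. Fix $(r,z)\in\overline{\mathcal{O}}$ and $\varepsilon>0$, and pick an $\varepsilon$-optimal stopping time $\tau$ for $U(r,z)$, possibly attaining the value $+\infty$; define the gain process
\[
\xi_t:=e^{\lambda\,((z-\alpha)\vee S_t-(z-\alpha))-\int_0^t\rho(R^r_s)\,ds},\qquad t\ge 0.
\]
Since $\mu>0$ (Assumption \ref{ass:mu}) we have $S_\infty<+\infty$ $\P$-a.s., and Remark \ref{rem:S} ensures that the limit $\xi_\infty:=\lim_{t\to\infty}\xi_t$ exists in $[0,+\infty)$ $\P$-a.s.\ (with the convention $e^{-\infty}=0$). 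Setting $\xi_\tau:=\xi_\infty$ on $\{\tau=+\infty\}$, the sample path continuity of $S$ and of $t\mapsto\int_0^t\rho(R_s)\,ds$ yields
\[
\xi_{\tau\wedge n}\xrightarrow[n\to\infty]{}\xi_\tau \qquad \text{$\P$-a.s.}
\]

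The key technical ingredient is then the uniform domination
\[
\xi_{\tau\wedge n}\le \sup_{0\le t<\infty}e^{\lambda\,[(z-\alpha)\vee S_t]-\int_0^t\rho(R^r_s)\,ds},
\]
whose expectation is finite by \eqref{eq:sup}. Applying dominated convergence one obtains $\E[\xi_{\tau\wedge n}]\to\E[\xi_\tau]$. Since $\tau\wedge n$ is a stopping time bounded by $n$, $\E[\xi_{\tau\wedge n}]\le U^n(r,z)$, and so
\[
U(r,z)-\varepsilon\;\le\;\E[\xi_\tau]\;=\;\lim_{n\to\infty}\E[\xi_{\tau\wedge n}]\;\le\;\lim_{n\to\infty}U^n(r,z).
\]
Letting $\varepsilon\downarrow 0$ completes the proof.

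The only non-routine point is the well-posedness and passage to the limit when $\tau=+\infty$: one must justify that $\xi_{\tau\wedge n}$ converges $\P$-a.s.\ to a meaningful limit and that this limit is an admissible value of the reward for $\tau$. This is handled via Remark \ref{rem:S} (finiteness of $S_\infty$ and $L_r$) and the integrable majorant from \eqref{eq:sup}.
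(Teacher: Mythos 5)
Your proof is correct and follows essentially the same route as the paper's: both truncate a (nearly) optimal stopping time at level $n$ and pass to the limit to establish $\lim_n U^n \ge U$. The paper uses the optimal stopping time $\tau_*$ together with Fatou's lemma (exploiting nonnegativity of the gain), while you use an $\varepsilon$-optimal stopping time and dominated convergence via the majorant from \eqref{eq:sup}; these are minor stylistic differences rather than a genuinely different argument.
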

\begin{proof}
Clearly $(U^n)_{n> 0}$ is an increasing sequence and $U^n\le U$ for all $n>0$. Therefore we denote its limit $U^\infty:=\lim_{n\to\infty} U^n\le U$. Let us now fix $(r,z)\in\R_+\times[\alpha,+\infty)$ and let $\tau_*=\tau_*(r,z)$ be optimal for $U(r,z)$. Then
{\color{black}{
\begin{align*}
U^n(r,z)\ge \E_{r,z}\left[e^{\lambda\,\left((z-\alpha) \vee S_{\tau_*\wedge n}-(z-\alpha)\right) -\int_0^{\tau_*\wedge n}\rho(R_t)dt}\right]
\end{align*}
}}
and using Fatou's lemma we conclude
{\color{black}{
\begin{align*}
U^{\infty}(r,z)=\liminf_{n\to\infty}U^n(r,z)\ge &\E_{r,z}\left[\liminf_{n\to\infty}e^{\lambda\,\left((z-\alpha) \vee S_{\tau_*\wedge n}-(z-\alpha)\right)-\int_0^{\tau_*\wedge n}\rho(R_t)dt}\right]\\[+3pt]
=&\E_{r,z}\left[e^{\lambda\,\left((z-\alpha) \vee S_{\tau_*}-(z-\alpha)\right)-\int_0^{\tau_*}\rho(R_t)dt}\right]=U(r,z).
\end{align*}
}}
\end{proof}

We close this section by proving that $U$ is indeed continuous. It is worth remarking that all our results hold without any restriction on $\mu$, $\sigma$, and the only requirement is $2k\theta\ge \gamma^2$ to guarantee strictly positive rates.
\begin{proposition}\label{prop:Ucont}
{\bf (Continuity of $U$)}
The function $U$ is continuous on $\overline{\mathcal{O}}$ and $z\mapsto U(r,z)$ is convex for each $r\in\R_+$.
\end{proposition}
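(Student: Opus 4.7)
\emph{Convexity in $z$.} For each stopping time $\tau$, each $r \geq 0$, and each $\omega \in \Omega$, the quantity $(z-\alpha)\vee S_\tau(\omega) - (z-\alpha) = (S_\tau(\omega) - (z-\alpha))^+$ is a convex, nonincreasing, $1$-Lipschitz function of $z$. Composing with the convex nondecreasing map $x \mapsto e^{\lambda x}$ preserves convexity, and so do the expectation and the supremum over stopping times. Hence $z \mapsto U(r,z)$ is convex, and therefore continuous on $(\alpha,+\infty)$.

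\emph{Continuity via a maximal inequality.} The plan for joint continuity is to reduce it to an $L^1$-control of a pathwise supremum. Set $G^r_t(z) := \exp\bigl(\lambda(S_t - (z-\alpha))^+ - \int_0^t \rho(R^r_s)\,ds\bigr)$, so that $U(r,z) = \sup_\tau \E[G^r_\tau(z)]$. Since the pathwise supremum of $|G^{r_n}_\tau(z_n) - G^r_\tau(z)|$ over stopping times $\tau$ coincides with the pathwise supremum over deterministic $t \geq 0$, one has
$$|U(r_n, z_n) - U(r,z)| \leq \sup_\tau \E\bigl|G^{r_n}_\tau(z_n) - G^r_\tau(z)\bigr| \leq \E\Bigl[\sup_{t\geq 0}|G^{r_n}_t(z_n) - G^r_t(z)|\Bigr],$$
so it suffices to show the right-hand side tends to $0$ as $(r_n, z_n) \to (r,z)$. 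A triangle inequality splits this into a $z$-piece and an $r$-piece. For the $z$-piece, $|e^a - e^b| \leq |a-b|e^{\max(a,b)}$ and $1$-Lipschitzness of $y \mapsto y^+$ give the pathwise bound
$$|G^{r_n}_t(z_n) - G^{r_n}_t(z)| \leq \lambda|z_n - z| \cdot e^{\lambda S_t - \int_0^t \rho(R^{r_n}_s)\,ds},$$
and the $\sup_t$ of the right-hand side has expectation bounded by the constant $h_0$ from Proposition \ref{prop:U-bound} uniformly in $r_n$ (via the estimates in its proof), yielding an $O(|z_n - z|)$ bound that vanishes.

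\emph{The $r$-piece and the main obstacle.} For the $r$-piece, with $z$ fixed, pathwise continuity of the CIR flow in the initial condition together with continuity of $\rho$ yields $\sup_{t\in[0,T]} |G^{r_n}_t(z) - G^r_t(z)| \to 0$ a.s.\ for each $T > 0$, with $L^1$-dominator $2\sup_{t\leq T} G^{r''}_t(z)$ for any $r''$ below $r$ and below all but finitely many $r_n$ (by the comparison principle for CIR and the monotonicity of $\rho$; note that only continuity, and not Assumption \ref{ass:rho}(iii), of $\rho$ is invoked). The principal obstacle is then controlling the tail $\sup_{t \geq T}$ uniformly in $n$, since the almost-sure convergence $G^{r_n}_t \to G^r_t$ is not uniform in $t$. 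I plan to resolve this by exploiting the strict discounting: Assumption \ref{ass:rho}(ii) together with Assumption \ref{ass:mu} forces $\int_0^\infty \rho(R^r_s)\,ds = +\infty$ a.s., while $S_\infty < +\infty$ a.s.\ by Remark \ref{rem:S}; hence $G^r_t(z) \to 0$ a.s.\ as $t \to \infty$, so $\sup_{t \geq T} G^{r''}_t(z) \to 0$ a.s.\ as $T \to \infty$, and dominated convergence (with $L^1$-dominator from \eqref{eq:sup}) makes its expectation small, uniformly controlling the tails of both $G^{r_n}_t(z)$ and $G^r_t(z)$. The cases $r_n \downarrow r$ and $r = 0$ use $G^r_t(z)$ itself as the dominator. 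Combining the $z$- and $r$-pieces completes the proof of continuity.
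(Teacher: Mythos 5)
Your proof is correct, and the convexity argument coincides with the paper's own. For continuity you take a genuinely different, more direct route. The paper exploits the lower semi-continuity of $U$ inherited from the general optimal stopping theory in \cite{S}, which together with the monotonicity of $r\mapsto U(r,z)$ yields right-continuity in $r$ for free; left-continuity is then established by a delicate exchange of limits involving the finite-horizon approximants $U^n\uparrow U$ from Lemma \ref{lem:UT} and the decreasing limit of the optimal stopping times $\tau_*(r-h,z)$ as $h\downarrow 0$. You bypass all of this optimal stopping machinery by bounding $|U(r_n,z_n)-U(r,z)|\le\E\bigl[\sup_{t\ge0}|G^{r_n}_t(z_n)-G^r_t(z)|\bigr]$ and controlling the right-hand side with (i) the uniform-in-$r$ bound $\E[\sup_t e^{\lambda S_t-\int_0^t\rho(R^r_s)ds}]\le h_0$ implicit in the proof of Proposition \ref{prop:U-bound} for the $z$-increment, (ii) pathwise continuity of the CIR flow in the initial condition (available, e.g., from the estimate \eqref{DeltaS} proved later for Proposition \ref{prop:C1}, which needs only the Feller condition), and (iii) the a.s.\ decay $G^r_t\to 0$ as $t\to\infty$, a consequence of $\int_0^\infty\rho(R^r_s)\,ds=\infty$ a.s.\ (by the ergodicity of the CIR process and Assumption \ref{ass:rho}-(ii)) together with Remark \ref{rem:S}. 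Your approach is more elementary — it dispenses with Lemma \ref{lem:UT}, with the LSC input from \cite{S}, and with any information about the stopping region or the optimal stopping times — and it produces joint continuity in a single estimate rather than separating the $z$- and $r$-directions. What the paper's route buys instead is the explicit modulus of continuity \eqref{lip-z} in $z$, which is reused in the proof of Corollary \ref{cor:elb}; a comparable Lipschitz-type bound $|U(r,z_1)-U(r,z_2)|\le\lambda h_0|z_1-z_2|$ does, however, fall out of your $z$-piece.
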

\begin{proof}
First we show convexity. Since
\[
z\mapsto e^{\lambda\,[(z-\alpha) \vee S_\tau-(z-\alpha)] - \int_0^\tau \rho(R^r_s) \,ds}
\]
is convex and $\sup(f+g)\le \sup(f)+\sup(g)$, we easily obtain
\begin{align*}
U&(r,\beta z_1+(1-\beta)z_2)\\
\le& \sup_{\tau\ge0}\E\left[\Big(\beta e^{\lambda\,[(z_1-\alpha) \vee S_\tau-(z_1-\alpha)]}+(1-\beta)e^{\lambda\,[(z_2-\alpha) \vee S_\tau-(z_2-\alpha)]}\Big)e^{-\int_0^\tau \rho(R^r_s) \,ds}\right]\\
\le&\beta U(r,z_1)+(1-\beta)U(r,z_2)
\end{align*}
for all $\beta\in(0,1)$.

Now we show that $z\mapsto U(r,z)$ is continuous uniformly with respect to $r\in\R_+$. Recall that $U(r,\,\cdot\,)$ is decreasing (Lemma \ref{lem:U-monot}), let $z_2>z_1$ and denote by $\tau_1:=\tau_*(r,z_1)$ the optimal stopping time for $U(r,z_1)$. Since $\tau_1$ is suboptimal in $U(r,z_2)$
we get
\begin{align}\label{lip-z}
0\le&\, U(r,z_1)-U(r,z_2)\\[+3pt]
\le&\,\E\left[e^{-\int_0^{\tau_1}\rho(R_t)dt}\Big(e^{\lambda( (z_1-\alpha)\vee S_{\tau_1}-(z_1-\alpha)}-e^{\lambda( (z_2-\alpha)\vee S_{\tau_1}-(z_2-\alpha)}\Big)\right]\notag\\[+3pt]
\le &\,\E\left[\mathds{1}_{\{S_{\tau_1}>z_1-\alpha\}}e^{\lambda S_{\tau_1}-\int_0^{\tau_1}\rho(R_t)dt}\Big(e^{-\lambda (z_1-\alpha)}-e^{-\lambda(z_2-\alpha)}\Big)\right]\notag\\[+3pt]
\le &\,h_0 \Big(e^{-\lambda (z_1-\alpha)}-e^{-\lambda(z_2-\alpha)}\Big)\notag
\end{align}
where $h_0$ is as in Proposition \ref{prop:U-bound} and we have also used that
\[
e^{\lambda ( S_{\tau_1}-(z_2-\alpha))}\le e^{\lambda((z_2-\alpha)\vee S_{\tau_1}-(z_2-\alpha))}.
\]

It only remains to prove that $r\mapsto U(r,z)$ is continuous for each $z\in[\alpha,+\infty)$ given and fixed.

Since $\rho$ is nondecreasing (cf.\ $(i)$ in Assumption \ref{ass:rho}), then $r\mapsto U(r,z)$ is nonincreasing (Lemma \ref{lem:U-monot}) and lower semi-continuous (see the discussion above Lemma \ref{lem:UT}). Hence $r\mapsto U(r,z)$ is right-continuous for each $z\in[\alpha,+\infty)$. Recalling $U^n$ from Lemma \ref{lem:UT}, and noticing that $U(r,z)-U(r-h,z)\le 0$ is {\color{black}{nondecreasing}} as $h\downarrow 0$, we have
\begin{align*}
0\ge& \lim_{h\to 0}\big[U(r,z)-U(r-h,z)\big]=\lim_{h\to 0}\lim_{n\to\infty}\Big[U^n(r,z)-U(r-h,z)\Big]\\[+3pt]
=&\lim_{n\to\infty}\lim_{h\to 0}\Big[U^n(r,z)-U(r-h,z)\Big],
\end{align*}
where we are allowed to swap the limits as both sequences are {\color{black}{nondecreasing}} (as $n\to\infty$ and $h\to 0$). Now we set $\tau_h:=\tau_*(r-h,z)$, which is optimal for $U(r-h,z)$, and consider the suboptimal stopping time $\tau_h\wedge n$ inside $U^n$. With no loss of generality we assume $r-h\ge r_0$ for some $r_0>0$. Then, using that $\rho(R^{r-h}_\cdot)\ge \rho(R^{r_0}_\cdot)$ (in the last term of the expression below), we obtain
\begin{align}\label{eq:cont0}
U^n&(r,z)-U(r-h,z)\\[+3pt]
\ge& \E\left[\mathds{1}_{\{\tau_h\le n\}}e^{\lambda((z-\alpha)\vee S_{\tau_h}-(z-\alpha))-\int_0^{\tau_h}\rho(R^r_t)dt}\left(1-e^{-\int_0^{\tau_h}[\rho(R^{r-h}_t)-\rho(R^r_t)]dt}\right)\right]\notag\\[+3pt]
&+ \E\Big[\mathds{1}_{\{\tau_h> n\}}e^{\lambda((z-\alpha)\vee S_{n}-(z-\alpha))-\int_0^{n}\rho(R^r_t)dt}\cdot\notag\\
&\hspace{+20pt}\cdot\left(1-e^{\lambda((z-\alpha)\vee S_{\tau_h}-(z-\alpha)\vee S_n)-\int_0^{n}[\rho(R^{r-h}_t)-\rho(R^r_t)]dt}e^{-\int_n^{\tau_h}\rho(R^{r_0}_t)dt}\right)\Big].\notag
\end{align}

We make a number of observations: (i) since
{$\tau_h=\inf\{t\ge0\,:\,U(K^z_t,R^{r-h}_t)= 1 \}$}, and $U(z,\,\cdot\,)$ is nonincreasing, we have {\color{black}{$\tau_h\downarrow \eta$, $\P$-a.s.~as $h\to 0$ with $\eta$ a stopping time; (ii) the latter implies that $\P$-a.s.~we have
\begin{align*}
&\lim_{h\to 0} S_{\tau_h}=S_\eta \quad \text{and}\quad\lim_{h\to 0} \int_n^{\tau_h}\rho(R^{r}_t)dt= \int_n^{\eta}\rho(R^{r}_t)dt\quad\text{for all $r>0$};
\end{align*}
}}
(iii) by dominated convergence and continuity of $\rho$ we have, $\P$-a.s.
\[
\lim_{h\to 0}\int_0^{n}\Big|\rho(R^{r-h}_t)-\rho(R^r_t)\Big|dt=0,
\]
which also implies
\[
\lim_{h\to 0}\left(\mathds{1}_{\{\tau_h\le n\}}\int_0^{\tau_h}[\rho(R^{r-h}_t)-\rho(R^r_t)]dt\right)=0.
\]
{\color{black}{Recalling \eqref{eq:sup} we can use dominated convergence in \eqref{eq:cont0} to obtain
\begin{align}\label{eq:cont1}
0\ge& \lim_{n\to\infty}\lim_{h\to 0}\Big[U^n(r,z)-U(r-h,z)\Big]\\[+3pt]
\ge&\lim_{n\to\infty}\E\Big[\mathds{1}_{\{\eta\ge n\}}e^{\lambda((z-\alpha)\vee S_{\eta\wedge n}-(z-\alpha))-\int_0^{\eta\wedge n}\rho(R^r_t)dt}\cdot\notag\\
&\hspace{+40pt}\cdot\left(1-e^{\lambda((z-\alpha)\vee S_{\eta}-(z-\alpha)\vee S_{\eta\wedge n})-\int_{\eta\wedge n}^{\eta}\rho(R^{r_0}_t)dt}\right)\Big].\notag
\end{align}
}}
It is now easy to check that, $\P$-a.s.
{\color{black}{
\[
\lim_{n\to\infty} \Big[\lambda((z-\alpha)\vee S_{\eta}-(z-\alpha)\vee S_{\eta\wedge n})-\int_{\eta\wedge n}^{\eta}\rho(R^{r_0}_t)dt\Big]=0.
\]
}}
Hence, using dominated convergence once again in \eqref{eq:cont1}, gives
\begin{align*}
0\ge&\lim_{h\to 0}\Big[U(r,z)-U(r-h,z)\Big]= \lim_{n\to\infty}\lim_{h\to 0}\Big[U^n(r,z)-U(r-h,z)\Big]\ge 0
\end{align*}
as claimed.
\end{proof}
Continuity of $U$ immediately implies that $\cS$ is closed and that $\cC$ is relatively open in $\overline{\mathcal{O}}$: indeed, by its definition, $\cC$ may not be open in $\R^2$ since it may include a portion of the lines $\{r=0\}$ and $\{z=\alpha\}$. For this reason we will use the notation $\partial \cC$ for the boundary of $\cC$ in $\R^2$ and $\partial_{\overline{\mathcal{O}}} \cC$ for the relative boundary in $\overline{\mathcal{O}}$. Moreover $\rm Int\,\cC$ will denote the interior of $\cC$ in $\R^2$.

Observe now that the (super)martingale property of the process $\Lambda$ (see \eqref{eq:supm} and \eqref{eq:m}), along with standard arguments (see, e.g., \cite[Theorem~2.7.7]{KS2}) give the following corollary.
\begin{corollary}{\bf (Free boundary problem)}\label{cor:fbp}
The function $U$ belongs to $C^{2}$ separately in the interior of $\cC$ and in the interior of $\cS$ (so away from $\partial\cC$), and it satisfies
\begin{align}
&\label{eq:fbp1}\cL U(r,z)-\rho(r)U(r,z)=0, & \text{for $(r,z)\in \rm Int\,\cC$}\\[+3pt]
\label{eq:fbp2}&\cL U(r,z)-\rho(r)U(r,z)=-\rho(r), & \text{for $(r,z)\in \rm Int\,\cS$}\\[+3pt]
&U(r,z)=1, & \text{for $(r,z)\in\partial_{\overline{\mathcal{O}}}\cC$}.
\end{align}
\end{corollary}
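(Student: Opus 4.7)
The plan is to establish three separate facts: the boundary value $U=1$ on $\partial_{\overline{\mathcal{O}}}\cC$, the PDE in $\rm Int\,\cS$, and the $C^2$ regularity together with the PDE in $\rm Int\,\cC$. The first two are immediate from Proposition \ref{prop:Ucont}. Indeed, continuity of $U$ makes $\cC=\{U>1\}$ relatively open in $\overline{\mathcal{O}}$, so $\cC\cap\partial_{\overline{\mathcal{O}}}\cC=\varnothing$ forces $\partial_{\overline{\mathcal{O}}}\cC\subset\cS$ and hence $U=1$ there. Moreover, $U\equiv 1$ on $\cS$ is $C^\infty$ on $\rm Int\,\cS$ with $\cL U=0$, yielding $(\cL-\rho)U=-\rho$.

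The substantive content is the assertion on $\rm Int\,\cC$, which I would handle by the standard PDE/optimal-stopping identification. Fix $(r_0,z_0)\in\rm Int\,\cC$; since the interior is taken in $\R^2$ and $\cC\subset\overline{\mathcal{O}}$, necessarily $r_0>0$ and $z_0>\alpha$, so I choose an open ball $D$ around $(r_0,z_0)$ with $\overline{D}\subset\rm Int\,\cC\cap\{r>0,\,z>\alpha\}$. On $\overline{D}$ the operator $\cL-\rho(\cdot)$ is uniformly elliptic with smooth coefficients and $U|_{\partial D}$ is continuous, so classical Schauder theory yields a unique $u\in C^{2,\alpha}(D)\cap C(\overline{D})$ solving
\begin{equation*}
(\cL-\rho)u=0\text{ in }D,\qquad u=U\text{ on }\partial D.
\end{equation*}
The goal is then to show $u\equiv U$ on $D$, which will give $U\in C^2(D)$ and the desired PDE at $(r_0,z_0)$; the conclusion for $\rm Int\,\cC$ follows from the arbitrariness of $(r_0,z_0)$.

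To identify $u$ and $U$ on $D$, I plan to write both as expectations against the exit time $\tau_D:=\inf\{t\ge 0:(R_t,K_t)\notin D\}$. The key observation I exploit is that along any path starting at $(r_0,z_0)$ with $S_0=0$, as long as the process remains in $D$ one has $K_t>\alpha$ strictly; but the nondecreasing process $\ell_t^*:=(z_0-\alpha)\vee S_t-(z_0-\alpha)=(S_t-(z_0-\alpha))^+$ can only grow at instants where $Y_t=S_t=z_0-\alpha$, which in turn force $K_t=\alpha$. Hence $\ell_t^*\equiv 0$ on $[0,\tau_D]$ and the factor $M_t:=e^{\lambda\ell_t^*}$ equals $1$ throughout. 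An It\^o expansion of $N_tu(R_t,K_t)$ with $N_t:=e^{-\int_0^t\rho(R_s)ds}$, combined with $(\cL-\rho)u=0$ on $D$, then shows that $(N_{t\wedge\tau_D}u(R_{t\wedge\tau_D},K_{t\wedge\tau_D}))_{t\ge 0}$ is a bounded martingale; letting $t\to\infty$ (with $\tau_D<+\infty$ $\P$-a.s.\ by nondegeneracy of $(R,K)$ on $\overline{D}$) gives $u(r_0,z_0)=\E[N_{\tau_D}U(R_{\tau_D},K_{\tau_D})]$. On the other hand, $D\subset\cC$ entails $\tau_D\le\tau_*$, so the martingale identity \eqref{eq:m} with optional sampling at $\tau_D\wedge n$ and the fact that $M_{\tau_D}=1$ produce the identical expression for $U(r_0,z_0)$; the conclusion $U=u$ on $D$ follows.

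The main obstacle I anticipate is precisely the bookkeeping around $\ell^*$ and $M$ in the preceding paragraph: the process $\Lambda$ features the running supremum $S_t$ of $Y$, and a naive It\^o comparison between $u$ and $U$ risks being spoiled by the reflection local time. It is exactly the choice of $\overline{D}$ strictly bounded away from $\{z=\alpha\}$ that forces $M\equiv 1$ on $[0,\tau_D]$ and reduces the identification to a purely two-dimensional elliptic PDE argument of the type summarized in \cite[Theorem~2.7.7]{KS2}.
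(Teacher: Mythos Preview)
Your proposal is correct and follows exactly the route the paper indicates: it invokes the (super)martingale properties \eqref{eq:supm}--\eqref{eq:m} of $\Lambda$ and the standard identification with the Dirichlet problem on small balls, as in \cite[Theorem~2.7.7]{KS2}. Your explicit observation that $\ell^*_t\equiv 0$ on $[0,\tau_D]$ (because $\overline{D}$ is bounded away from $\{z=\alpha\}$) is precisely the point that reduces the argument to the classical elliptic setting; the only minor slip is calling the coefficients ``smooth'' when $\rho$ is merely continuous, but this does not affect the substance of the argument.
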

\noindent Refined regularity of $U$ and its behaviour at $\R_+\times\{\alpha\}$ will be provided in the next section.


\subsection{Differentiability of $U$}
\label{sec:U-C1}

In order to obtain higher regularity properties for $U$ we need some information on the shape of the stopping region $\cS$.  Recalling Lemma \ref{lem:U-monot} (in particular the fact that $U$ is nonincreasing in $z$) and defining, for $r\ge 0$,
\begin{align}\label{eq:b}
b(r):=\sup\{z\in[\alpha,+\infty)\,:\, U(r,z)>1\}
\end{align}
with the convention that $\sup\varnothing=\alpha$, we immediately find, for $r\in\R_+$,
\begin{align}\label{eq:Sr}
\cS_r:=\{z\in[\alpha,+\infty)\,:\,(r,z)\in\cS\}=[b(r),+\infty).
\end{align}
This means that the $r$-section of the stopping set is connected and the graph of the map $r\mapsto b(r)$ describes the boundary
that separates $\cS$ from $\cC$ {(denoted by $\partial_{\overline{\mathcal{O}}}\cC$ above)}. Next we state few important properties of the optimal boundary.
\begin{lemma}
\label{lem:bnew}
{Consider the map $b:\R_+ \to [\alpha,+\infty]$ defined in \eqref{eq:b}}. Then
\begin{align}
\label{eq:b-rc2}
\text{$r\mapsto b(r)$ is nonincreasing and right-continuous.}
\end{align}
Moreover, $b(r)>\alpha$ for all $r\ge 0$.
\end{lemma}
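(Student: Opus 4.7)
My plan is to handle the three assertions separately. Monotonicity and right-continuity of $b$ are short structural consequences of what is already established. For monotonicity, Lemma \ref{lem:U-monot} makes $U$ nonincreasing in $r$, so for $r_1<r_2$ the $r$-sections satisfy $\mathcal{C}_{r_2}\subseteq\mathcal{C}_{r_1}$, hence $b(r_2)\le b(r_1)$. For right-continuity I would take $r_n\downarrow r$; monotonicity gives $\bar b:=\lim_n b(r_n)\le b(r)$, and the converse follows from joint continuity of $U$ (Proposition \ref{prop:Ucont}): for any $z>\bar b$ one has $U(r_n,z)=1$ for every $n$, so $U(r,z)=1$ in the limit, hence $z\ge b(r)$, and letting $z\downarrow \bar b$ gives $\bar b\ge b(r)$.

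The main work is the claim $b(r)>\alpha$ for every $r\ge 0$. My plan here is to establish the stronger estimate $U(r,\alpha)>1$; continuity of $U(r,\cdot)$ at $\alpha$ then produces $\delta>0$ with $U(r,z)>1$ on $[\alpha,\alpha+\delta)$, whence $b(r)\ge\alpha+\delta$. To lower-bound $U(r,\alpha)$ I would plug in the deterministic stopping time $\tau\equiv T>0$ and exploit independence of $B$ (which drives $S$) and $W$ (which drives $R$):
\[
U(r,\alpha)\ \ge\ \E\!\left[e^{\lambda S_T-\int_0^T\rho(R^r_s)\,ds}\right]\ =\ \E\!\left[e^{\lambda S_T}\right]\cdot\E\!\left[e^{-\int_0^T\rho(R^r_s)\,ds}\right].
\]
The first factor is of order $1+c_1\sqrt T+o(\sqrt T)$ as $T\downarrow0$: from $e^x\ge 1+x$ and the elementary bound $S_T\ge\sigma\max_{s\in[0,T]}B_s-\mu T$, one obtains $\E[e^{\lambda S_T}]\ge 1+\lambda(\sigma\sqrt{2T/\pi}-\mu T)$. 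For the second factor I would fix $r_0>r$ and localise on $A_T:=\{\sup_{s\in[0,T]}R^r_s\le r_0\}$: monotonicity of $\rho$ (Assumption \ref{ass:rho}(i)) yields $\rho(R^r_s)\le\rho(r_0)$ on $A_T$, so
\[
\E\!\left[e^{-\int_0^T\rho(R^r_s)\,ds}\right]\ \ge\ e^{-T\rho(r_0)}\,\P(A_T)\ =\ 1-O(T),
\]
using $\P(A_T^c)=O(T)$ from routine moment estimates on the CIR dynamics. Multiplying, $U(r,\alpha)\ge(1+c_1\sqrt T)(1-O(T))>1$ for all sufficiently small $T$.

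The delicate point, and what I expect to be the main obstacle, is the control of the second factor: at this stage of the paper Assumption \ref{ass:rho}(iii) is deliberately not being used (cf.\ Remark \ref{rem:rho}), so direct $L^1$ control on $\rho(R^r_s)$ is unavailable and $\rho$ is in principle allowed to grow arbitrarily fast. Localising on $A_T$ and invoking the monotonicity of $\rho$ is precisely what sidesteps this, replacing $\rho(R^r_s)$ by the finite constant $\rho(r_0)$. The rate balance then works because the $O(\sqrt T)$ contribution from $\E[S_T]$ strictly dominates the $O(T)$ losses coming from the discount factor and from $\P(A_T^c)$ as $T\downarrow 0$.
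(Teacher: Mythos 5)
Your proof is correct and reaches the right conclusions, but the argument for $b(r)>\alpha$ takes a genuinely different route from the paper's. For monotonicity you and the paper do the same thing (use that $U$ is nonincreasing in $r$). For right-continuity you argue directly from joint continuity of $U$ on a sequence $r_n\downarrow r$, whereas the paper derives lower semicontinuity of $b$ from closedness of $\cS$ and combines it with monotonicity; the two arguments are equivalent in content and length.

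The main divergence is in proving $U(r,\alpha)>1$. The paper localises the \emph{stopping time} (using $\tau_\eps\wedge t$ with $\tau_\eps=\inf\{t:R_t^{r_0}\ge r_0+\eps\}$), applies Jensen's inequality $\E[e^X]\ge e^{\E[X]}$, and then needs only the qualitative fact $\P(\tau_\eps\le t)\to 0$ as $t\downarrow 0$ — no rate on the CIR process is required, because $\P(\tau_\eps\le t)^{1/2}$ appears multiplicatively against $\sqrt t$. You instead take a deterministic time $T$, factor the expectation using independence of $B$ and $W$, localise on the \emph{event} $A_T=\{\sup_{s\le T}R^r_s\le r_0\}$, and expand $e^{\lambda S_T}\ge 1+\lambda S_T$. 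Both routes hinge on the same rate balance (the $\sqrt T$ gain from the running supremum of $B$ dominates $O(T)$ losses), but your version demands the additional quantitative input $\P(A_T^c)=o(\sqrt T)$. Your claim $\P(A_T^c)=O(T)$ is correct and does follow from routine arguments — Doob's $L^2$ maximal inequality applied to the martingale part of $R^r_\cdot-r$, the deterministic bound on the drift part, and Chebyshev — but since this is the load-bearing estimate in your approach it should be spelled out rather than described as ``routine moment estimates''. On balance the paper's Jensen argument is slightly cleaner because it avoids needing any rate on the escape probability; your factorisation argument is arguably more transparent about where independence of $B$ and $W$ is being used.
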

\begin{proof}
 The fact that $\cS$ is closed and \eqref{eq:Sr} imply that $r\mapsto b(r)$ is lower semi-continuous. Indeed take any sequence $(r_n)_{n\ge 1}$ converging to some $r_0\ge 0$. Then
\[
(r_n,b(r_n))\in\cS\implies \cS\ni\liminf_{n\to\infty}(r_n,b(r_n))=(r_0,\liminf_{n\to\infty} b(r_n))
\]
and by \eqref{eq:b} we have $\liminf_{n\to\infty} b(r_n)\ge b(r_0)$.
Using again Lemma \ref{lem:U-monot} (in particular the fact that $U$ is nonincreasing in $r$) we have
\begin{align}
\label{eq:b-monot2} &
(r,z)\in\cS\implies [r,+\infty)\times\{z\}\in\cS,
\end{align}
i.e.,
$r\mapsto b(r)$ is nonincreasing. Since $b(\cdot)$ is also lower semi-continuous, then \eqref{eq:b-rc2} holds.

It only remains to prove the final statement. Take any $r_0\ge 0$, fix $\eps>0$ and denote $\tau_\eps=\inf\{t\ge 0: R^{r_0}_t\ge r_0+\eps\}$. For any $t>0$ the stopping time $\tau_\eps\wedge t$ is admissible and suboptimal for $U(r_0,\alpha)$ so that
\begin{align}\label{eq:ba0}
U(r_0,\alpha)\ge \E\left[e^{\lambda S_{\tau_\eps\wedge t}-\int_0^{\tau_\eps\wedge t}\rho(R^{r_0}_s)ds}\right]\ge \exp\left(\E\left[\lambda S_{\tau_\eps\wedge t}-\int_0^{\tau_\eps\wedge t}\rho(R^{r_0}_s)ds\right]\right),
\end{align}
where the final inequality is due to Jensen's inequality. Recalling that $\rho$ is nondecreasing (Assumption \ref{ass:rho}) we have
\[
\int_0^{\tau_\eps\wedge t}\rho(R^{r_0}_s)ds\le \bar \rho_\eps(\tau_\eps\wedge t),
\]
with $\bar \rho_\eps:=\rho(r_0+\eps)=\sup_{0\le r\le r_0+\eps}\rho(r)$. Now we use estimates as in \cite[Lemma 15]{P19}. In particular, we have
\begin{align}\label{eq:ba1}
&\E\left[\lambda S_{\tau_\eps\wedge t}-\int_0^{\tau_\eps\wedge t}\rho(R^{r_0}_s)ds\right]\ge\E\left[\lambda\sigma \sup_{0\le s\le \tau_\eps\wedge t}B_s-(\mu+\bar\rho_\eps)(\tau_\eps\wedge t)\right] \\
&\ge \lambda\sigma\E\left[\sup_{0\le s\le t}B_s-\mathds{1}_{\{\tau_\eps\le t\}}\sup_{0\le s\le t}B_s \right]-(\mu+\bar\rho_\eps)t\notag\\
&\ge \lambda\sigma\E\left[\sup_{0\le s\le t}B_s\right]-\lambda\sigma\P(\tau_\eps\le t)^\frac{1}{2}\E\left[\big(\sup_{0\le s\le t}B_s\big)^2\right]^{\frac{1}{2}}-(\mu+\bar\rho_\eps)t\notag\\
&=\lambda\sigma\sqrt{t}\left(1-\P(\tau_\eps\le t)^\frac{1}{2}\right)-(\mu+\bar\rho_\eps)t\notag
\end{align}
where in the final inequality we used that $\sup_{0\le s\le t}B_s=|B_t|$ in law. Since $\P(\tau_\eps> 0)=1$ and, consequently, $\P(\tau_\eps \le t)\to 0$ as we let $t\to 0$, we have that the term involving $\sqrt{t}$ dominates. Hence, plugging \eqref{eq:ba1} in \eqref{eq:ba0} and choosing $t$ sufficiently small we reach
$U(r_0,\alpha)>1$ which implies $b(r_0)>\alpha$. Since $r_0\ge 0$ was arbitrary, the proof is complete.
\end{proof}

The simple properties that we have obtained above are crucial to guarantee global $C^1$ regularity of $U$.
We start by noticing that $K$ and $R$ are independent and have transition densities $p^K(t,z;z')$ and $p^R(t,r;r')$, respectively, which are continuous  with respect to the initial point, i.e.~$z\mapsto p^K(t,z;z')$ and $r\mapsto p^R(t,r;r')$ are continuous for all $t>0$, $z' \in [\alpha, + \infty)$, $r' \in [0,\,+ \infty)$. Then it is not hard to verify that the process $(R_t,K_t)_{t\ge 0}$ is strong Feller, i.e.~for any Borel measurable and bounded function $f:\R_+\times \R_+$ and any $t>0$, it holds that $(r,z)\mapsto\E_{r,z}[f(R_t,K_t)]$ is continuous. We then have the following important result.

\begin{lemma}
\label{lem:cont-taustar}
For any $(r_0,z_0)\in\partial_{\overline{\mathcal O}}\cC$ and any sequence $(r_n,z_n)_{n\ge 1}\subset \cC$ such that $(r_n,z_n) \rightarrow (r_0,z_0)$ as $n\to\infty$, we have
\begin{equation}\label{taustar_cont}
\lim_{n\to\infty}	\tau_*(r_n,z_n)= 0,\quad  \P\textup{-a.s.}
\end{equation}
\end{lemma}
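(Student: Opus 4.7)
The plan is as follows. First I observe that $(r_0,z_0)\in\partial_{\overline{\mathcal{O}}}\cC$ forces $(r_0,z_0)\in\cS$ (because $\cC$ is relatively open, by continuity of $U$) and, in view of \eqref{eq:b}--\eqref{eq:Sr}, that $z_0=b(r_0)$. Monotonicity of $b$ from Lemma \ref{lem:bnew} then implies that the open set
\begin{align*}
W:=\{(r,z)\in\overline{\mathcal{O}}\,:\,r>r_0,\ z>z_0\}
\end{align*}
is contained in $\cS$, since $b(r)\le b(r_0)=z_0<z$ for any $(r,z)\in W$. Defining $\sigma_n:=\inf\{t\ge 0:(R^{r_n}_t,K^{z_n}_t)\in W\}$ we thus get $\tau_*(r_n,z_n)\le\sigma_n$, so it suffices to prove $\sigma_n\to 0$ $\P$-a.s.

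The heart of the argument is a Blumenthal $0$--$1$ law applied to $\sigma_0$, the hitting time of $W$ started from $(r_0,z_0)$. Note that $z_0>\alpha$ by Lemma \ref{lem:bnew}, so reflection of $K$ at $\alpha$ plays no role on short time scales and $K^{z_0}$ is locally a drifted Brownian motion around $z_0$. If $r_0>0$, standard CIR scaling together with independence of $B$ and $W$ gives $\P(R^{r_0}_t>r_0,\,K^{z_0}_t>z_0)\to\tfrac14$ as $t\downarrow 0$; if instead $r_0=0$, the Feller condition \eqref{eq:hpCIR} forces $R^0_t>0$ for every $t>0$ and the joint probability tends to $\tfrac12$. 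Either way $\P(\sigma_0\le t)\ge \P(R^{r_0}_t>r_0,\,K^{z_0}_t>z_0)$ stays bounded below by a positive constant, so $\P(\sigma_0=0)>0$; since $\{\sigma_0=0\}\in\mathcal{F}_{0+}$, Blumenthal's $0$--$1$ law yields $\sigma_0=0$ $\P$-a.s.

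The final step upgrades $\sigma_0=0$ to $\sigma_n\to 0$. Fix $\eps>0$. For a.e.\ $\omega$ the previous step provides $t^*(\omega,\eps)\in(0,\eps]$ and $\delta(\omega,\eps)>0$ with $R^{r_0}_{t^*}(\omega)>r_0+\delta$ and $K^{z_0}_{t^*}(\omega)>z_0+\delta$. Since $K^{z_n}_t=(z_n-\alpha)\vee S_t-Y_t+\alpha$ is an explicit, Lipschitz-in-$z$ pathwise functional of $(B,z_n)$, and since the CIR flow $r\mapsto R^r_\cdot$ depends pathwise continuously (and indeed monotonically, by the one-dimensional comparison principle) on the initial datum, for $n$ large enough we still have $(R^{r_n}_{t^*}(\omega),K^{z_n}_{t^*}(\omega))\in W$. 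Thus $\sigma_n(\omega)\le t^*(\omega,\eps)\le\eps$ eventually, and letting $\eps\downarrow 0$ along a countable sequence gives $\sigma_n\to 0$ $\P$-a.s.

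The main technical subtlety I foresee is in this last step: the Blumenthal argument produces a \emph{random} time $t^*(\omega)$ whose validity has to persist when the initial datum is perturbed from $(r_0,z_0)$ to $(r_n,z_n)$. This requires pointwise-in-$\omega$ convergence of the flows (uniformly on $[0,t^*]$), rather than mere convergence in probability; this is available for $K$ through its explicit representation and for $R$ through Yamada--Watanabe pathwise uniqueness for CIR, but must be invoked carefully to avoid a subsequence argument that would only yield a.s.\ convergence along a subsequence.
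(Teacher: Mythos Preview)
Your argument is correct, and it shares with the paper the two essential ingredients: the quadrant $W=\{r>r_0,\,z>z_0\}$ lies in $\cS$ by monotonicity of $b$ (the paper calls this set $\mathcal R$), and a Blumenthal $0$--$1$ law establishes instantaneous entry into $W$ from $(r_0,z_0)$. The implementation, however, differs in two places. For the Blumenthal step the paper does not compute $\P(R^{r_0}_t>r_0,\,K^{z_0}_t>z_0)$ directly; instead it localises the dynamics to a compact neighbourhood where the generator is uniformly elliptic, builds an auxiliary diffusion $(\widetilde R,\widetilde K)$ with Aronson-type Gaussian bounds on its density, and uses those bounds together with a cone contained in the interior of $\mathcal R$ to get the required positive lower bound. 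Your route is more elementary and exploits the explicit local structure of $(R,K)$ near $(r_0,z_0)$, which the paper's Aronson argument does not need. For the passage from regularity at $(r_0,z_0)$ to $\tau_*(r_n,z_n)\to 0$, the paper invokes the abstract fact (Dynkin) that for a strong Feller process regularity of boundary points implies the desired continuity, whereas you bypass this by a direct pathwise-flow argument; note that the joint continuity $(t,r,z)\mapsto(R^r_t,K^z_t)$ you rely on is stated and used elsewhere in the paper (proof of Corollary~\ref{cor:elb}, via Kolmogorov--Chentsov), so it is available. One small remark: the paper asserts $r_0>0$ whenever $(r_0,z_0)\in\partial_{\overline{\mathcal O}}\cC$ and therefore never treats the case $r_0=0$, while you cover it explicitly; this makes your argument slightly more self-contained.
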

\begin{proof}
Let us denote by $\sigma_*$ the first hitting time of $(K,R)$ to $\cS$:
$$
\sigma_*(r,z):=\inf\{t>0\,:\,(R^r_t, K^z_t)\in \cS\}.
$$
It is well known (see \cite[Chapter 13.1-2, Vol. II]{DYN}) that since $(R_t, K_t)_{t\ge 0}$ is a strong Feller process, \eqref{taustar_cont} holds if and only if all the boundary points are regular for $\cS$, namely
\begin{equation}
\label{lem:reg-b}
\P_{r,z}(\sigma_*=0) =1  \qquad   \forall(r,z)\in\partial_{\overline{\mathcal O}}\cC.
\end{equation}
(For further details on the above statement the reader may consult, e.g., \cite[Theorem 2.12, Ch. 4.2]{KS} and \cite[pp.\ 4-5 and Corollary 2]{DeAPe17}.)

Denoting by
$$\widehat{\sigma}_*(r,z):=\inf\{t\geq 0\,:\,(R^r_t, K^z_t)\in \text{Int}\,\cS\}$$
the first entry time of $(R^r_t, K^z_t)_{t\geq0}$ to the interior of $\cS$, and noticing that $\sigma_* \leq \widehat{\sigma}_*$, we now prove \eqref{lem:reg-b} by showing that
\begin{equation}
\label{lem:reg-b-2}
\P_{r,z}(\widehat{\sigma}_*=0) =1  \qquad   \forall(r,z)\in\partial_{\overline{\mathcal O}}\cC.
\end{equation}

Let $(r_0, z_0)\in \partial_{\overline{\mathcal O}} \mathcal{C}$. Define $\mathcal{R}:=[r_0,\infty) \times [z_0,\infty)$, and denote by  $\text{Int}\,\mathcal R$ and $\partial \mathcal R$ respectively its interior and its boundary in $\R^2$. Since $r \mapsto b(r)$ is nonincreasing, we have $\mathcal{R} \subseteq \mathcal{S}$. Also, let $\mathcal{K}$ be a compact neighbourhood of {\color{black}{$(r_0,z_0)$}} and let $\text{Int}\,\mathcal K$ and $\partial \mathcal K$ denote respectively its interior and its boundary in $\R^2$. Since $(r_0,z_0)\in \partial_{\overline{\mathcal O}} \mathcal{C}$ then $r_0>0$ and we assume that $\mathcal K\cap\{r=0\}=\varnothing$.
Then there exists some $\eta_{\mathcal{K}}>0$ such that
\begin{equation}
\label{eq:uiK}
\eta^{-1}_{\mathcal{K}} \geq \gamma^2 r \geq \eta_{\mathcal{K}}\quad\text{on $\mathcal K$}
\end{equation}
so that the diffusion coefficient of the process $(R_t)_{t\geq0}$ is uniformly non degenerate over $\mathcal{K}$. Let us define an auxiliary process $(\widetilde{R}_t)_{t\geq0}$ with dynamics
\begin{align*}
&d\widetilde{R}_t = b_{\mathcal{K}}(\widetilde{R}_t) dt + \gamma_{\mathcal{K}}(\widetilde{R}_t) dW_t, \qquad \widetilde{R}_0 = r,\\
&d\widetilde{K}_t = \mu dt + \sigma dB_t, \qquad \qquad\qquad\quad  \widetilde{K}_0 = z,
\end{align*}
where $b_{\mathcal{K}}(r)=\kappa(\theta-r)$ and $\gamma_{\mathcal{K}}(r) = \gamma \sqrt{r}$ on $\mathcal{K}$, and are continuously extended to be constant outside $\mathcal{K}$. Notice that the uniform ellipticity condition \eqref{eq:uiK} holds for $\gamma_{\mathcal K}$ on the whole $\R$.

Since the process $(\widetilde{R}_t, \widetilde{K}_t)_{t\ge 0}$ is non degenerate over the whole $\R^2$, it admits a continuous transition density $\widetilde p(\cdot, \cdot, \cdot; r,z)$ such that, for any $t >0$
\begin{equation}
\label{stime-p}
\frac{M}{t} e^{-\lambda_0 \frac{|r-\bar r|^2+|z - \bar z|^2}{t}} \geq \widetilde{p}(t, \bar r, \bar z; r,z) \geq \frac{m}{t}e^{-\Lambda_0 \frac{|r-\bar r|^2+|z - \bar z|^2}{t}}
\end{equation}
for some constants $M >m >0$, $\Lambda_0 >\lambda_0 >0$ (see, e.g., \cite[Theorem 1]{Aronson}).
Moreover, denoting
\[
\tau_{\mathcal{K}}:= \inf\{t\geq 0:\, ({R}_t,K_t) \notin\text{Int}\,\mathcal{K}\times(\alpha,\infty)\}
\]
and
\[
\widetilde{\tau}_{\mathcal{K}}:= \inf\{t\geq 0:\, (\widetilde{R}_t,\widetilde{K}_t) \notin \text{Int}\,\mathcal{K}\times(\alpha,\infty)\},
\]
we have that
\begin{align}\label{SDEuniq}
({R}_{t\wedge \tau_{\mathcal{K}}},K_{t\wedge \tau_{\mathcal{K}}}) = (\widetilde{R}_{t \wedge \widetilde{\tau}_{\mathcal{K}}},\widetilde{K}_{t\wedge \widetilde{\tau}_{\mathcal{K}}}), \quad \P_{r_0,z_0}\textup{-a.s.}
\end{align}
by uniqueness of the solution of the SDE (recall that the reflected process $K$ is just a Brownian motion with drift away from the reflection point $\alpha$).

Now, let $\mathcal{R}'$ be a (half) cone with vertex in $(r_0,z_0)$, whose closure is contained in $\text{Int}\,\mathcal{R}\cup (r_0,z_0)$,
and denote by $\widehat{\sigma}'_{\mathcal{R}}$ and $\widetilde{\sigma}'_{\mathcal{R}}$ the corresponding entry times of $(R,K)$ and $(\widetilde R,\widetilde K)$, respectively, {\color{black}{into the interior of $\mathcal{R}'$.}}
Notice that this additional cone is needed in the argument that follows because $(t_0,z_0)$ may lie on a horizontal/vertical stretch of the boundary $\partial_{\overline{\mathcal O}}\cC$, in which case $(\partial_{\overline{\mathcal O}}\cC\cap\partial\mathcal R)\setminus(r_0,z_0)\neq\varnothing$ whereas $(\partial_{\overline{\mathcal O}}\cC\cap\partial\mathcal R')\setminus(r_0,z_0)=\varnothing$ always holds.
Fixing $t>0$ we then have, {\color{black}{using first that
$\text{Int}\,\mathcal{R}\subseteq \text{Int}\,\mathcal{S}$,
\begin{align}
\label{eq:estimsigma}
&\P_{r_0,z_0}(\widehat{\sigma}_* \leq t)  \geq \P_{r_0,z_0}(\widehat{\sigma}'_{\mathcal{R}} \leq t) \geq \P_{r_0,z_0}(\widehat{\sigma}'_{\mathcal{R}} \leq t, \tau_{\mathcal{K}}>t) \\
& = \P_{r_0,z_0}(\widetilde{\sigma}'_{\mathcal{R}} \leq t, \widetilde{\tau}_{\mathcal{K}}>t)=
\P_{r_0,z_0}(\widetilde{\sigma}'_{\mathcal{R}} \leq t) - \P_{r_0,z_0}(\widetilde{\sigma}'_{\mathcal{R}} \leq t,\widetilde{\tau}_{\mathcal{K}}\leq t)
\notag \\
&\ge
\P_{r_0,z_0}(\widetilde{\sigma}'_{\mathcal{R}} \leq t) - \P_{r_0,z_0}(\widetilde{\tau}_{\mathcal{K}}\leq t),\notag
\end{align}
}}
where the first equality holds by \eqref{SDEuniq}.
Thanks to \eqref{stime-p}
\begin{align}
\label{eq:estimsigma-2}
& \P_{r_0,z_0}(\widetilde{\sigma}'_{\mathcal{R}} \leq t) = \int_{\mathcal{R}'} \widetilde{p}(t, r_0, z_0 ; r,z) dr dz \geq \int_{\mathcal{R}'} \frac{m}{t}e^{-\Lambda_0 \frac{|r-r_0|^2+|z -z_0|^2}{t}}dr dz.
\end{align}
Using the fact that  the change of variable  $s=\frac{r-r_0}{\sqrt {t}}$, $\zeta=\frac{z-z_0}{\sqrt {t}}$ maps the cone $\mathcal{R}'$ into a cone $\mathcal R'_0$ with the same aperture but vertex in $(0,0)$, we get
\begin{align*}
\P_{r_0,z_0}(\widetilde{\sigma}'_{\mathcal{R}} \leq t) & \ge \int_{\mathcal{R}'_0} m \,e^{-\Lambda_0 (s^2 +\zeta^2)}ds d\zeta=: q >0.
\end{align*}
Letting $t \rightarrow 0$ we obtain $\P_{r_0,z_0}(\widetilde{\sigma}'_{\mathcal{R}} = 0)  \ge  q >0$ and therefore, by \eqref{eq:estimsigma}, also that
$\P_{r_0,z_0}(\widehat{\sigma}_* = 0) \ge  q >0$ upon noting that $\P_{r_0,z_0}(\widetilde{\tau}_{\mathcal{K}}\leq t) \rightarrow 0$ as $t \rightarrow 0$.

Since $\{\widehat{\sigma}_* = 0\}$ is measurable with respect to the trivial $\sigma$-algebra $\mathcal F_0^{K, W}$, by the Blumenthal's 0-1 Law we obtain $\P_{r_0,z_0}(\widehat{\sigma}_* = 0)=1$, which completes the proof.
\end{proof}

%
%
%
\begin{lemma}
\label{lem:bound}
{Fix $q\in\mathbb N$}. There is a constant $c>0$ such that, for all $\mathbb{F}^{K,W}$-stopping times $\tau$, and any $(r,z)\in\R_+\times[\alpha,+\infty)$, it holds
\begin{align}
\E\left[e^{\lambda S_\tau-\int_0^\tau\rho(R^r_t)dt}\int_0^\tau e^{-\frac{k}{2}t}
\left[1+(R^r_t)^{1+q}\right]
\sqrt{R_t}\, dt\right]\le c.
\end{align}
Moreover the family
\[
\left\{e^{\lambda S_\tau-\int_0^\tau\rho(R^r_t)dt}\int_0^\tau e^{-\frac{k}{2}t}
\left[1+(R^r_t)^{1+q}\right]
\sqrt{R_t}\, dt,\:\:\tau\ge 0\right\}
\]
is uniformly integrable.
\end{lemma}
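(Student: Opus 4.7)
The argument combines Fubini, the supermartingale property of Section \ref{sec:basicU}, the independence of $B$ and $W$, and explicit CIR Laplace transforms.

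First, by Fubini (the integrand is nonnegative) I write
\[
I:=\E\!\left[e^{\lambda S_\tau-\int_0^\tau\rho(R^r_t)dt}\!\int_0^\tau\! e^{-kt/2}[1+(R^r_t)^{1+q}]\sqrt{R^r_t}\,dt\right]=\int_0^\infty\! e^{-kt/2}J_r(t)\,dt,
\]
with $J_r(t):=\E[\mathds{1}_{\{t<\tau\}}e^{\lambda S_\tau-\int_0^\tau\rho(R^r_s)ds}(1+(R^r_t)^{1+q})\sqrt{R^r_t}]$. The factor $(1+(R^r_t)^{1+q})\sqrt{R^r_t}\mathds{1}_{\{t<\tau\}}$ is $\cF^{K,W}_t$-measurable, so I pull it out and condition on $\cF^{K,W}_t$ inside the exponential. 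Combining the supermartingale $\Lambda^\alpha_t=e^{\lambda S_t-\int_0^t\rho(R_s)ds}U(R_t,K^\alpha_t)$ from \eqref{eq:supm} (with $z=\alpha$) with the two-sided bound $1\le U\le h_0$ of Proposition \ref{prop:U-bound}, optional sampling yields
\[
\E\!\left[\mathds{1}_{\{t<\tau\}}e^{\lambda S_\tau-\int_0^\tau\rho(R^r_s)ds}\,\Big|\,\cF^{K,W}_t\right]\le h_0\,e^{\lambda S_t-\int_0^t\rho(R^r_s)ds},
\]
so $J_r(t)\le h_0\,\E[(1+(R^r_t)^{1+q})\sqrt{R^r_t}\,e^{\lambda S_t-\int_0^t\rho(R^r_s)ds}]$.

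Second, since $S_t$ is a functional of $B$ while $R^r$ is a functional of the independent $W$, independence factorises the expectation as $J_r(t)\le h_0\,f(t)\,\psi_r(t)$, where $f(t):=\E[e^{\lambda S_t}]$ and $\psi_r(t):=\E[(1+(R^r_t)^{1+q})\sqrt{R^r_t}\,e^{-\int_0^t\rho(R^r_s)ds}]$. The computation \eqref{tonum}--\eqref{estimateBM} in the proof of Proposition \ref{prop:U-bound} gives $f(t)\le 1+C_1 t+C_2\sqrt t$. For $\psi_r$ I invoke Assumption \ref{ass:rho}-(ii) to bound it by $e^{-c_1 t}\E[(1+(R^r_t)^{1+q})\sqrt{R^r_t}\,e^{-c_2\int_0^t R^r_s ds}]$, and then compute the remaining expectation by differentiating in $y$, at $y=0$, the joint CIR Laplace transform $\E[e^{-yR^r_t-c_2\int_0^t R^r_s ds}]=e^{-\widetilde A(t;y)-r\widetilde G(t;y)}$ (an extension of \eqref{Laplace}--\eqref{GA} that solves the same Riccati system with initial datum $y$ instead of $0$). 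Together with the standard uniform-in-$t$ CIR moment bounds $\E[(R^r_t)^m]\le C_m(1+r^m)$, this yields an estimate for which $\int_0^\infty e^{-kt/2}f(t)\psi_r(t)\,dt$ is finite and produces the constant $c$.

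For the uniform integrability claim I repeat the whole chain at level $p>1$ close to $1$: the supermartingale step still applies (using $U^p\le h_0^p$), the polynomial estimate on $\E[e^{p\lambda S_t}]$ follows by the same path as \eqref{tonum}--\eqref{estimateBM} after a drift shift, and the CIR moments $\E[(R^r_t)^{pm}]$ together with the joint Laplace transform $\E[e^{-pc_2\int_0^t R^r_s ds}]$ remain finite and integrable in $t$ against $e^{-kpt/2}$. A uniform $L^p$-bound then implies uniform integrability via the de la Vall\'ee-Poussin criterion. The main delicate point is the balance between the polynomial growth of $(R^r_t)^{q+3/2}$ and the decay produced by the discount in Step~2; the factor $e^{-kt/2}$ present in the integrand is crucial, since it matches the mean-reversion rate of $\sqrt{R^r_t}$ behind both the CIR Laplace decay $A_{c_2}(t)\sim(k\theta/\gamma^2)(\eta_{c_2}-k)t$ and the comparison estimate $\sqrt{R^{r_1}_t}-\sqrt{R^{r_2}_t}\le e^{-kt/2}(\sqrt{r_1}-\sqrt{r_2})$ that makes this lemma useful in the subsequent $C^1$ argument.
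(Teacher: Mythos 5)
Your proof of the first claim (the uniform bound) takes a genuinely different route from the paper's, and it is essentially correct. The paper integrates by parts in $dS$ to dominate the random variable pointwise by $A+B$ (with the integrals extended to $[0,\infty)$) and then estimates $\E[A]$ and $\E[B]$; you instead apply Fubini to swap $\int_0^\tau e^{-kt/2}\,dt$ with the outer expectation and then dispose of the awkward dependence on $\tau$ by optional sampling of the nonnegative supermartingale $\Lambda^\alpha_t=e^{\lambda S_t-\int_0^t\rho(R_s)\,ds}\,U(R_t,K^\alpha_t)$, sandwiched between $1\le U\le h_0$. That is a clean observation that exploits the optimal-stopping structure already established in Section \ref{sec:basicU} and avoids the integration by parts entirely; you do need to invoke optional sampling for a nonnegative right-continuous supermartingale at a possibly infinite stopping time (using the a.s.\ limit as terminal value, cf.\ Remark \ref{rem:S}), but that is standard. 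The subsequent factorization $J_r(t)\le h_0 f(t)\psi_r(t)$ via independence, the polynomial bound on $f(t)=\E[e^{\lambda S_t}]$, and the decay of $\psi_r(t)$ under Assumption \ref{ass:rho}-(ii) all go through; the differentiation-of-Laplace-transform device for $\psi_r$ is more awkward than necessary (the half-integer power $\sqrt{R_t}$ does not come from integer differentiation), and a Cauchy--Schwarz/H\"older split as in the paper's treatment of $A$, combined with the CIR moment bound \eqref{bDu}, would be cleaner — but that is a presentational point, not an error.

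The uniform-integrability argument, however, contains a genuine gap. You propose an $L^p$ bound for some $p>1$ plus de la Vall\'ee-Poussin, but neither of the two ingredients you list actually works as stated. First, the claimed ``polynomial estimate on $\E[e^{p\lambda S_t}]$'' is false for $p>1$: since $S_t\ge Y_t=-\mu t+\sigma B_t$, one has $\E[e^{p\lambda S_t}]\ge \E[e^{p\lambda Y_t}]=e^{p\lambda\mu(p-1)t}$, which grows \emph{exponentially} (the cancellation $\lambda z-\tfrac{(z+\mu t)^2}{2\sigma^2 t}=-\tfrac{(z-\mu t)^2}{2\sigma^2 t}$ exploited in \eqref{tonum} is special to $\lambda=2\mu/\sigma^2$ and does not survive the replacement $\lambda\rightsquigarrow p\lambda$). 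Taking $p$ very close to $1$ does make this rate small and one could conceivably absorb it into $e^{-kpt/2}$, but this must be argued, not asserted. Second, and more fundamentally, the assertion that ``the supermartingale step still applies (using $U^p\le h_0^p$)'' has no justification: a nonnegative supermartingale raised to a power $p>1$ is neither a super- nor a sub-martingale in general, so optional sampling cannot be applied to $(\Lambda^\alpha)^p$, and with it the key conditioning bound disappears. The paper sidesteps this entirely: its integration by parts produces a \emph{single} random variable $A+B$, independent of $\tau$, with $\E[A+B]<\infty$, and domination by a fixed integrable random variable is already sufficient for uniform integrability. Your Fubini-based route to the first claim does not naturally yield such a pointwise dominator, which is precisely why you were driven to the $L^p$ detour; to rescue the UI claim within your framework you would need either to switch back to the paper's integration-by-parts domination, or to find a supermartingale adapted to the exponent $p\lambda$ — neither of which the current write-up supplies.
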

\begin{proof}
Using that $S$ is of finite variation we integrate by parts
to get a first, convenient, upper bound
\begin{align*}
&e^{\lambda S_\tau-\int_0^\tau\rho(R^r_t)dt}\int_0^\tau e^{-\frac{k}{2}t}
\left[1+(R^r_t)^{1+q}\right]
\sqrt{R^r_t}dt\\
&\le\!\!
\lambda\int_0^\tau\!\! e^{\lambda S_t-\int_0^t\rho(R^r_s)ds}\left(\int_0^t\! e^{-\frac{k}{2}s}
\left[1+(R^r_s)^{1+q}\right]
\sqrt{R^r_s}ds\right)dS_t \\
& \hspace{1.5cm}
+ \int_0^\tau\!\! e^{\lambda S_t-\int_0^t\rho(R^r_s)ds-\frac{k}{2}t}
\left[1+(R^r_t)^{1+q}\right]
\sqrt{R^r_t}dt\\
&\le\!\!
\lambda\int_0^\infty\!\! e^{\lambda S_t-\int_0^t\rho(R^r_s)ds}\left(\int_0^t\! e^{-\frac{k}{2}s}
\left[1+(R^r_s)^{1+q}\right]
\sqrt{R^r_s}ds\right)dS_t\\
& \hspace{1.5cm} +\!\!\int_0^\infty\!\! e^{\lambda S_t-\int_0^t\rho(R^r_s)ds-\frac{k}{2}t}
\left[1+(R^r_t)^{1+q}\right]
\sqrt{R^r_t}dt\\
&=:A+B.
\end{align*}
Hence, to prove both claims of this lemma it is enough to show that $\E[A]+\E[B]<+\infty$.

We start by proving that $\E[B]<+\infty$. Using that $\rho\ge 0$ (see Assumption \ref{ass:rho}), that $\sqrt{r}\le 1+r$, Fubini's theorem and independence of $S_t$ and $R_t$ we obtain
{\color{black}{
\begin{align*}
\E[B]\le c\int_0^\infty\E\left[
e^{\lambda S_t-\frac{k}{4}t}\right]
e^{-\frac{k}{4}t}\E\left[1+ R^r_t + (R^r_t)^{1+q} + (R^r_t)^{2+q}\right]dt
\end{align*}
}}for some constant $c>0$, which will vary from line to line.
Observe now that (recall \eqref{eq:Sp})
$$
\lambda S_t-\frac{k}{4}t\le
\lambda\sigma \sup_{0\le s \le t} \left(B_s-\frac{\mu}{\sigma}s-\frac{k}{4 \lambda\sigma}s\right)
\le\lambda\sigma S^p, \qquad \hbox{with $p=\frac{\mu}{\sigma}+\frac{k}{4 \lambda\sigma}$.}
$$
Since $\P(S^p_\infty>x)=\exp(-2px)$ for $p>0$ (see Remark \ref{rem:S}), as in \eqref{eq:new} we easily get $\E\left[\exp(\lambda S_t-kt/4 )\right]\le c'$ for some $c'>0$.
Hence
{\color{black}{
\begin{align}\label{eq:b0}
\E[B]\le c\int_0^\infty e^{-\frac{k}{4}t}\E\left[1+ R^r_t + (R^r_t)^{1+q} + (R^r_t)^{2+q}\right]dt.
\end{align}
Now we recall \cite[Thm.~2.3]{Du}, which states that, for any $\zeta \in \mathbb{N}$, there is a constant $C_{\zeta}\!>\!0$, only depending on $\zeta$ and the coefficients of the SDE \eqref{eq:R}, such that
\begin{align}\label{bDu}
\E\left[(R^r_t)^{\zeta}\right]\le C_{\zeta},\quad\text{for all $t\ge 0$}.
\end{align}
Using the latter bound in \eqref{eq:b0} for $\zeta=\{1,1+q,2+q\}$ we get $\E[B]<+\infty$.}}

Next we show that $\E[A]<+\infty$. We only provide full details in the case $\rho(r)\ge c_2 r$ (see Assumption \ref{ass:rho}), since the case $\rho(r)\ge c_1$ is easier and can be dealt with in the same way. Below we use $\E[A]=\E[\E(A|{\cF^B_\infty})]$ and independence of $R$ from ${\cF^B_\infty}$. Then, recalling that $\sqrt{r}\le 1+r$, by Fubini's theorem we obtain {\color{black}{
\begin{align}\label{eq:b1}
\E[A]\le& c\,\E\left[\int_0^\infty e^{\lambda S_t}\E\left(e^{-c_2\int_0^t R^r_s ds}\int_0^t(1+ R^r_s + (R^r_s)^{1+q} + (R^r_s)^{2+q})e^{-\frac{k}{2}s}ds\big|{\cF^B_\infty}\right)dS_t\right]\\
\le & c\,\E\left[\int_0^\infty e^{\lambda S_t}\E\left(e^{-c_2\int_0^t R^r_s ds}\int_0^t(1 + R^r_s + (R^r_s)^{1+q} + (R^r_s)^{2+q})e^{-\frac{k}{2}s}ds\right)dS_t\right]\notag
\end{align}
}}for some constant $c>0$, which will vary from line to line. Repeated use of H\"older inequality and \eqref{Laplace} give {\color{black}{
\begin{align*}
&\E\left(e^{-c_2\int_0^t R^r_s ds}\int_0^t(1 + R^r_s + (R^r_s)^{1+q} + (R^r_s)^{2+q})e^{-\frac{k}{2}s}ds\right)\\
&\le \E\left(e^{- 2 c_2\int_0^t R^r_s ds}\right)^{\frac{1}{2}}\E\left[\left(\int_0^t(1 + R^r_s + (R^r_s)^{1+q} + (R^r_s)^{2+q})e^{-\frac{k}{2}s}ds\right)^2\right]^{\frac{1}{2}}\\
&\le e^{- \frac{1}{2} A_{2 c_2}(t)-\frac{r}{2}G_{2c_2}(t)}\E\left[\int_0^t e^{-\frac{k}{2}s}ds \int_0^t(1 + R^r_s + (R^r_s)^{1+q} + (R^r_s)^{2+q})^2e^{-\frac{k}{2}s}ds \right]^{\frac{1}{2}}\\
&\le C'_{q}\,\, e^{- \frac{1}{2} A_{2 c_2}(t)-\frac{r}{2}G_{2c_2}(t)},
\end{align*}
where the final inequality follows from \eqref{bDu}, for $\zeta=\{1,1+q,2+q\}$, and with some $C'_q>0$.}}

Plugging the last expression above in \eqref{eq:b1} gives
\begin{align*}
\E[A]\le&c\,\E\left[\int_0^\infty e^{\lambda S_t} e^{- \frac{1}{2} A_{2 c_2}(t)-\frac{r}{2}G_{2c_2}(t)}dS_t\right].
\end{align*}
The latter can be treated exactly by the same methods that we used to estimate \eqref{eq:Ub2}, hence $\E[A]<+\infty$.
\end{proof}

The methodology that we adopt to prove $C^1$ regularity of the value function was developed in \cite{DeAPe17} for general multi-dimensional, finite-time and infinite-time horizon, optimal stopping problems. However, due to the square root in the diffusion coefficient of the CIR dynamics, some of the integrability conditions required in \cite{DeAPe17} seem difficult to verify directly. So  in the proof of Proposition \ref{prop:C1} below we adapt the method to our setting.

\begin{proposition}{\bf ($C^1$ regularity of $U$)}
\label{prop:C1}
One has that $U\in C^1(\mathcal{O})$. Moreover
\begin{align}
\label{eq:Uz}
U_z(r,z)=-\lambda\E_{r,z}\left[\mathds{1}_{\{S_{\tau_*}>z-\alpha\}}e^{\lambda (S_{\tau_*}-(z-\alpha))-\int_0^{\tau_*}\rho(R_t)dt}\right]
\end{align}
for all $(r,z)\in \mathcal{O}$.
\end{proposition}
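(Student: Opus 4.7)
The function $U$ is already $C^2$ away from the free boundary $\partial_{\overline{\mathcal{O}}}\cC=\{(r,b(r)):r>0\}$: on $\mathrm{Int}\,\cS$ it is identically $1$, and on $\mathrm{Int}\,\cC$ Corollary \ref{cor:fbp} gives $U\in C^2$ with $\cL U-\rho U=0$. My plan therefore has three steps: (i) derive \eqref{eq:Uz} at interior points of $\cC$; (ii) use that formula to show $U_z$ extends continuously to $0$ across $\partial_{\overline{\mathcal{O}}}\cC$, matching the zero derivative inherited from $\mathrm{Int}\,\cS$; (iii) run the analogous program for $U_r$, where the genuine technical difficulty lives.

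For step (i) I would adapt the ``envelope'' sandwich from \cite{DeAPe17}. Fixing $(r,z)\in\mathrm{Int}\,\cC$ and setting $\tau_0:=\tau_*(r,z)$ and $\tau_h:=\tau_*(r,z+h)$, the sub-optimality of each stopping time for the value at the other point yields
\[
\E\!\left[e^{-\int_0^{\tau_0}\!\rho(R^r_s)ds}\,\tfrac{\delta^h(\tau_0)}{h}\right] \,\le\, \tfrac{U(r,z+h)-U(r,z)}{h} \,\le\, \E\!\left[e^{-\int_0^{\tau_h}\!\rho(R^r_s)ds}\,\tfrac{\delta^h(\tau_h)}{h}\right],
\]
with $\delta^h(\tau):=e^{\lambda(S_\tau-(z+h-\alpha))^+}-e^{\lambda(S_\tau-(z-\alpha))^+}$. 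The $1$-Lipschitz character of $x\mapsto x^+$ gives $|\delta^h(\tau)/h|\le \lambda e^{\lambda S_\tau}$, so Proposition \ref{prop:U-bound} provides an integrable envelope. The monotonicity of $b$, together with the deterministic bound $0\le K^{z+h}_t-K^z_t\le h$ and the relative openness of $\cC$, forces $\tau_h\uparrow\tau_0$ a.s.\ as $h\downarrow 0$, and dominated convergence pushes both sides of the sandwich to the right-hand side of \eqref{eq:Uz}. Continuity of $U_z$ across the free boundary (step (ii)) is then almost immediate from \eqref{eq:Uz}: for $(r_n,z_n)\in\cC$ converging to $(r_0,z_0)\in\partial_{\overline{\mathcal{O}}}\cC$, Lemma \ref{lem:cont-taustar} yields $\tau_*(r_n,z_n)\to 0$ a.s., hence $S_{\tau_*(r_n,z_n)}\to 0$; since $b(r_0)>\alpha$ by Lemma \ref{lem:bnew}, the indicator $\mathds{1}_{\{S_{\tau_*(r_n,z_n)}>z_n-\alpha\}}$ vanishes a.s.\ in the limit, and dominated convergence gives $U_z(r_n,z_n)\to 0$, matching $U_z\equiv 0$ on $\mathrm{Int}\,\cS$.

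Step (iii) is the main obstacle. The analogous sandwich for the $r$-derivative requires controlling $\rho(R^{r+h}_s)-\rho(R^r_s)$ uniformly in $s\in[0,\tau]$. This is precisely what Assumption \ref{ass:rho}-(iii) was designed for: it reduces the estimate to $c_3(1+(R^{r+h}_s)^q)\,|\sqrt{R^{r+h}_s}-\sqrt{R^r_s}|$, and a Yamada--Watanabe-type argument applied to the CIR SDE --- feasible thanks to the $\sqrt{R}$ diffusion coefficient --- furnishes the needed $L^1$ control of the square-root increment in terms of $h$. Combining this with Lemma \ref{lem:bound}, whose somewhat intricate statement is tailor-made to control the product of the exponential martingale with $\int_0^\tau e^{-kt/2}(1+(R^r_t)^{1+q})\sqrt{R^r_t}\,dt$, supplies the uniform integrability required for dominated convergence in the sandwich. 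This produces a candidate representation for $U_r$, and continuity across the free boundary is then obtained exactly as in step (ii). The crux of the whole argument is therefore the interplay between the CIR-specific square-root regularity and the exponential discount weight, which is what dictates the peculiar form of Lemma \ref{lem:bound}.
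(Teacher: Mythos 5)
Your proposal follows the same architecture as the paper's proof: derive the representation \eqref{eq:Uz} for $U_z$ via an envelope/sandwich argument in $\mathrm{Int}\,\cC$, extend continuity across $\partial_{\overline{\mathcal{O}}}\cC$ using Lemma \ref{lem:cont-taustar} and dominated convergence, and for $U_r$ exploit Assumption \ref{ass:rho}-(iii) together with the SDE for $A=\sqrt{R}$ (Tanaka/Yamada--Watanabe) and Lemma \ref{lem:bound}. One small imprecision: for $U_r$ you speak of a ``candidate representation'', but in fact the argument only yields a one-sided lower bound (a pathwise bound $\sqrt{R^{r+\eps}_s}-\sqrt{R^r_s}\le(\sqrt{r+\eps}-\sqrt{r})e^{-ks/2}$ is used, not an $L^1$ estimate), paired with the trivial upper bound $U_r\le 0$ from monotonicity; this is enough since both sides vanish as $\tau_*\to 0$, but it is a sandwich of bounds, not an exact formula.
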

\begin{proof}
The proof is organized in two steps.

\emph{Step 1.} We start by noticing that \eqref{eq:Uz} trivially holds in the interior of $\cS$ with $U_z=0$. Further, we know that $U_z$ is continuous in $\rm Int\,\cC$, so that if we can prove \eqref{eq:Uz} in $\rm Int\,\cC$, then Lemma \ref{lem:cont-taustar} and the use of dominated convergence will also imply continuity of $U_z$ across $\partial_{\overline{\mathcal{O}}} \cC$. Finally, to show that \eqref{eq:Uz} holds in $\rm Int\,\cC$ we can repeat the same steps as in the proof of \cite[Thm.~5.3]{DeAE17}, upon replacing the discount factor therein by $\int_0^{\tau_*} \rho(R_s) ds$. We omit further details in the interest of brevity.

\emph{Step 2.} Here we prove that $U_r\in C(\mathcal{O})$. We know that $U_r$ is continuous separately in Int$\,\cC$ and Int$\,\cS$. Then, it suffices to prove continuity across the boundary $\partial_{\overline{\mathcal{O}}} \cC$. We start finding bounds on $U_r$.

Fix $(r,z) \in \rm Int\,\cC$, $\varepsilon \in (0, \varepsilon_0)$, and denote $\tau_* := \tau_*(r, z)$. Recalling Lemma \ref{lem:U-monot} and optimality of $\tau_*$ for $U(r,z)$, we obtain
\begin{align}\label{ineqU1}
0 &\geq \frac{U(r+\eps,z)-U(r, z)}{\varepsilon}
\\
&\geq 	\frac{1}{\varepsilon}\E\left[e^{\lambda((z-\alpha)\vee S_{\tau_*}- (z-\alpha)) -\int_0^{\tau_*} \rho(R_s^{r+\varepsilon})ds} \left(1-e^{\int_0^{\tau_*}(\rho(R_s^{r+\eps})-\rho(R_s^{r}))ds}\right)\right]
\notag
\\
&\geq 	\frac{1}{\varepsilon}\E\left[e^{\lambda((z-\alpha)\vee S_{\tau_*}- (z-\alpha)) -\int_0^{\tau_*} \rho(R_s^{r})ds} \left(1-e^{\int_0^{\tau_*}
c_3(1+(R^{r+\eps}_s)^q)({\color{black}{\sqrt{R^{r+\eps}_s}-\sqrt{R^r_s}) ds} }}\right)\right],\notag
\end{align}
where in the last inequality we have used
Assumption \ref{ass:rho}, (i) and (iii), and the fact that
$r\mapsto R^r$ is nondecreasing.

Next, we notice that by Tanaka formula and Yamada-Watanabe's theorem, the process $A:=\sqrt{R}$ is the unique solution to
\[
 d A_t = \left[\left(\frac{k \theta}{2}- \frac{\gamma^2}{8}\right) \frac{1}{A_t} - \frac{k}{2}A_t\right]dt + \frac{\gamma}{2} d W_t,\quad A_0=\sqrt{R_0}.
\]
We then have
\begin{align*}
 d (A_t \, e^{\frac{k}{2}t}) &= e^{\frac{k}{2} t} \, dA_t + A_t  \,\frac{k}{2}e^{\frac{k}{2}t} \, dt\\
 &= e^{\frac{k}{2} t} \left[\left(\frac{k \theta}{2}- \frac{\gamma^2}{8}\right) \frac{1}{A_t} - \frac{k}{2}A_t\right]dt + e^{\frac{k}{2} t}\frac{\gamma}{2} d W_t + A_t  \,\frac{k}{2}e^{\frac{k}{2}t} \, dt\\
 &= e^{\frac{k}{2} t} \left[\left(\frac{k \theta}{2}- \frac{\gamma^2}{8}\right) \frac{1}{A_t}\right]dt + e^{\frac{k}{2} t}\frac{\gamma}{2} d W_t,
 \end{align*}
 which gives in the integral form
 \begin{align*}
A_s \, e^{\frac{k}{2}s}&= A_0 + \left(\frac{k \theta}{2}- \frac{\gamma^2}{8}\right) \int_0^s e^{\frac{k}{2} t} \frac{1}{A_t}dt + \int_0^s e^{\frac{k}{2} t}\frac{\gamma}{2} d W_t.
 \end{align*}
 Hence, using the above formula, we obtain
 \begin{align*}
&\left(\sqrt{R^{r+\eps}_s}- \sqrt{R^r_s}\right)\, e^{\frac{k}{2}s}\notag\\
&=\sqrt{r+\eps}- \sqrt{r} - \left(\frac{k \theta}{2}- \frac{\gamma^2}{8}\right) \int_0^s e^{\frac{k}{2} t} \frac{\sqrt{R^{r+\eps}_t} - \sqrt{R^r_t}}{\sqrt{R^{r+\eps}_t}\sqrt{R^r_t}} dt\\
&\leq \sqrt{r+\eps}- \sqrt{r},
\end{align*}
where the inequality follows from $R^r \leq R^{r+\eps}$, upon recalling that $2k \theta \ge \gamma^2$. Therefore,
\begin{align}\label{DeltaS}
\left(\sqrt{R^{r+\eps}_s}- \sqrt{R^r_s}\right)&\leq (\sqrt{r+\eps}- \sqrt{r}) e^{-\frac{k}{2} s}
\end{align}

{\color{black}{Hence, substituting \eqref{DeltaS} in the last integral of \eqref{ineqU1} and recalling $\varepsilon \leq \varepsilon_0$ we get
\begin{align*}
&\int_0^{\tau_*}
c_3(1+(R^{r+\eps}_s)^q)(\sqrt{R^{r+\eps}_s}-\sqrt{R^r_s})ds
\\
&
\le \left(\sqrt{r+\eps}-\sqrt{r}\right)\int_0^{\tau_*} e^{-\frac{k}{2}s}c_3(1+(R^{r+\eps_0}_s)^q)ds.
\end{align*}
Plugging this expression in \eqref{ineqU1} and using that
\begin{align*}
1-e^{(\sqrt{r+\eps}-\sqrt{r})C}=&-\eps C \int_0^1\frac{1}{2\sqrt{r+\eps u}}e^{(\sqrt{r+\eps u}-\sqrt{r})C}du\\
\ge& -\eps C e^{(\sqrt{r+\eps_0}-\sqrt{r})C} \int_0^1\frac{1}{2\sqrt{r+\eps u}}du\\
=& - C e^{(\sqrt{r+\eps_0}-\sqrt{r})C}(\sqrt{r+\eps }-\sqrt{r}),
\end{align*}
for any $C\ge 0$ independent of $\eps$, we continue with the chain of inequalities
\begin{align*}
0 &\geq \frac{U(r+\eps,z)-U(r, z)}{\varepsilon} \\
&\ge-\frac{(\sqrt{r+\eps }-\sqrt{r})}{\eps}\\
&\quad\cdot\E\bigg[e^{\lambda((z-\alpha)\vee S_{\tau_*}- (z-\alpha)) -\int_0^{\tau_*} \rho(R_s^{r})ds}\int_0^{\tau_*}\! e^{-\frac{k}{2}s}
c_3(1+(R^{r+\eps_0}_s)^q)
ds\notag\\
&\qquad\qquad\qquad\cdot \exp\left(\left(\sqrt{r+\eps_0}-\sqrt{r}\right)\int_0^{\tau_*} e^{-\frac{k}{2}s}c_3\left(1+(R^{r+\eps_0}_s)^q\right)
ds\right)\bigg].\notag
\end{align*}
Now we let $\eps\to 0$ first, and then we also let $\eps_0\to0$. Thanks to monotone convergence we obtain
\begin{align}\label{ineqU3}
0 &\geq U_r(r,z) \\
&\ge -\frac{1}{2\sqrt{r}}\E\left[e^{\lambda((z-\alpha)\vee S_{\tau_*}- (z-\alpha)) -\int_0^{\tau_*} \rho(R_s^{r})ds} \int_0^{\tau_*}\!\! 
c_3(1+(R^{r}_s)^q)
  e^{-\frac{k}{2} s} ds\right].\notag
\end{align}
}}
We notice that the right-hand side above is bounded by a constant, thanks to Lemma \ref{lem:bound}.

Now, fix $(r_0,z_0)\in\partial_{\overline{\mathcal{O}}} \cC$ and take a sequence ${\rm{Int}}\,\cC \ni {\color{black}{(r_n,z_n)\to(r_0,z_0)}}$, as $n\to \infty$. Using \eqref{ineqU3} with $(r_n,z_n)$ in place of $(r,z)$, recalling that $\tau_*(r_n,z_n)\to 0$ by Lemma \ref{lem:cont-taustar}, and using dominated convergence (justified by the second claim of Lemma \ref{lem:bound}), we get
\[
0 \geq\limsup_{n\to \infty} U_r(r_n,z_n)\ge \liminf_{n\to \infty} U_r(r_n,z_n)\ge 0.
\]
Since the boundary point was arbitrary we conclude that $U_r$ is continuous across $\partial_{\overline{\mathcal{O}}} \cC$.
\end{proof}

An immediate consequence of the above proposition is the following.
\begin{corollary}\label{cor:elb}
For all $r\in\R_+$, we have
\begin{align}
U_z(r,\alpha+)=-\lambda \,U(r,\alpha).
\end{align}
\end{corollary}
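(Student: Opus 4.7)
The plan is to pass to the limit $z \downarrow \alpha$ in the representation of $U_z$ provided by Proposition \ref{prop:C1} and conclude by dominated convergence. The key ingredients are (a) the almost-sure convergence $\tau_*(r,z) \to \tau_*(r,\alpha)$ as $z \downarrow \alpha$, and (b) the strict positivity $\tau_*(r,\alpha)>0$ a.s.

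For (a), observe that $K^z_t - K^\alpha_t = [(z-\alpha)\vee S_t] - S_t \in [0, z-\alpha]$, so $K^z \to K^\alpha$ uniformly on $[0,\infty)$ and $K^z\ge K^\alpha$. Since the stopping set $\cS$ is upward-closed in $z$ (by Lemma \ref{lem:U-monot}) and closed in $\R^2$ (by continuity of $U$, Proposition \ref{prop:Ucont}), the monotonicity gives $\tau_*(r,z)\le \tau_*(r,\alpha)$. Conversely, for any $T<\tau_*(r,\alpha)$ the path $\{(R^r_t, K^\alpha_t): t \in [0,T]\}$ is a compact subset of $\cC$ and hence has positive distance $\delta(\omega,T)>0$ from $\cS$. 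For $z-\alpha<\delta(\omega,T)$, uniform closeness forces $(R^r_t, K^z_t) \in \cC$ for all $t \in [0,T]$, whence $\tau_*(r,z) \ge T$. Letting $T \uparrow \tau_*(r,\alpha)$ establishes (a).

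For (b), Lemma \ref{lem:bnew} gives $b(r)>\alpha$, so $(r,\alpha) \in \cC$ and $U(r,\alpha)>1$. By Blumenthal's 0-1 law the event $\{\tau_*(r,\alpha)=0\}$ has probability $0$ or $1$; if the probability were $1$, the corresponding value would equal $\E[e^{0}]=1$, contradicting $U(r,\alpha)>1$. Hence $\tau_*(r,\alpha)>0$ a.s. Since $S_t>0$ a.s.\ for every $t>0$ (another Blumenthal argument applied to $B$), we also obtain $S_{\tau_*(r,\alpha)}>0$ a.s.

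Combining (a), (b) with pathwise continuity, the integrand in the formula from Proposition \ref{prop:C1} converges $\P$-a.s.\ to $e^{\lambda S_{\tau_*(r,\alpha)}-\int_0^{\tau_*(r,\alpha)}\rho(R^r_t)\,dt}$ (in particular $\mathds{1}_{\{S_{\tau_*(r,z)}>z-\alpha\}}\to 1$ a.s.) and is dominated by $\sup_{t\ge 0}e^{\lambda S_t - \int_0^t \rho(R^r_s)\,ds}$, which is integrable by \eqref{eq:sup}. Dominated convergence then yields
\[
U_z(r,\alpha+) = -\lambda\,\E\!\left[e^{\lambda S_{\tau_*(r,\alpha)}-\int_0^{\tau_*(r,\alpha)}\rho(R^r_t)\,dt}\right] = -\lambda\,U(r,\alpha),
\]
where the second equality follows by the definition of $U$ at $z=\alpha$. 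The main technical obstacle lies in (a): turning uniform convergence of paths into convergence of hitting times requires carefully exploiting both the monotonicity in $z$ and closedness of $\cS$.
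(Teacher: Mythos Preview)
Your proof is correct and follows essentially the same approach as the paper: both pass to the limit in the representation \eqref{eq:Uz}, establish $\tau_*(r,z)\to\tau_*(r,\alpha)$ a.s.\ via monotonicity in $z$ combined with a pathwise argument (the paper uses the Lipschitz estimate \eqref{lip-z} for $U$, you use closedness of $\cS$ and a compactness/distance argument---these are equivalent), invoke $b(r)>\alpha$ from Lemma \ref{lem:bnew} to get $\tau_*(r,\alpha)>0$ a.s., and conclude by dominated convergence using \eqref{eq:sup}.
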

\begin{proof}
Fix $r\ge 0$ and let $z_n\downarrow \alpha$ as $n\to\infty$. Then, if
\begin{equation}\label{tauconv}
	\tau_*^n:=\tau_*(r,z_n)\to \tau_*^\alpha=\tau_*(r,\alpha) \,\,\,\textup{as} \,\,n\to \infty,\quad  \P\textup{-a.s.},
\end{equation}
it suffices to take limits in \eqref{eq:Uz}. {Indeed}, by dominated convergence (recall \eqref{eq:sup}) we obtain
\[
U_z(r,\alpha+)=-\lambda\E\left[e^{\lambda S_{\tau^\alpha_*}-\int_0^{\tau_*^\alpha}\rho(R^r_t)dt}\right]=-\lambda \,U(r,\alpha),
\]
where, in order to remove the indicator function in the limit of \eqref{eq:Uz}, we have also used that $\P(S_{\tau_*^\alpha}>0)=1$, being $\P(\tau_*^\alpha>0)=1$ since $b(r)>\alpha$ by Lemma \ref{lem:bnew}.
So it only remains to prove convergence of the stopping times in \eqref{tauconv}.

The sequence $(K^{z_n})_{n\ge 1}$ is decreasing and therefore the sequence of stopping times $(\tau_*^n)_{n\ge 1}$ is increasing with $\tau^n_*\le \tau_*^\alpha$ for all $n\ge 1$. Hence, $\tau^n_*\uparrow \tau^\infty\le \tau^\alpha_*$, $\P$-a.s., for some stopping time $\tau^\infty$. Now we show that $\tau^\infty=\tau^\alpha_*$ as needed, using an argument similar to those used in \cite[Lem.~4.17]{Ch-DeA} and \cite[Lem.~1.2]{Men} but under different conditions.

Recall that $(t,r,z)\mapsto (R^r_t(\omega),K^z_t(\omega))$ is continuous for all $\omega\in\Omega\setminus N$ and some universal
null set $N$ by Kolmogorov-Chentsov continuity theorem. Fix $\omega\in\Omega\setminus N$.
Let $\delta>0$ be such that $\tau^\alpha_*(\omega)>\delta$, then by continuity of paths there exists $c_\delta>0$ such that
\[
\inf_{0\le t\le \delta}\Big(U(R^r_t(\omega),K^\alpha_t(\omega))-1\Big)\ge c_\delta.
\]
{\color{black}{Thanks to the explicit dynamics of $(K_t)_{t\ge 0}$ in \eqref{K} we find $K^{z_n}_t - K^{\alpha}_t = (z_n-\alpha - S_t)^+ \leq (z_n-\alpha)$. {The latter and} \eqref{lip-z} give
\begin{align*}
&\sup_{0\le t\le \delta}\Big|U(R^r_t(\omega),K^\alpha_t(\omega))
-U(R^r_t(\omega),K^{z_n}_t(\omega))\Big|\\
&\le
h_0
\sup_{0\le t\le \delta} \left(e^{-\lambda(K^\alpha_t(\omega)-\alpha)}
-e^{-\lambda(K^{z_n}_t(\omega)-\alpha)}
\right)\\
&\le
 \lambda h_0 \sup_{0\le t\le \delta}(K^{z_n}_t(\omega)-K^\alpha_t(\omega))
\le
 \lambda h_0 (z_n-\alpha).
\end{align*}
}}
Then there is $n_{\delta,\omega}\ge 1$ such that
\[
\inf_{0\le t\le \delta}\Big(U(R^r_t(\omega),K^{z_n}_t(\omega))-1\Big)\ge \frac{c_\delta}{2}
\]
for all $n\ge n_{\delta,\omega}$. Hence $\lim_{n\to\infty}\tau^n_*(\omega)>\delta$ and, since $\delta$ was arbitrary
\[
\lim_{n\to\infty}\tau^n_*(\omega)\ge \tau^\alpha_*(\omega).
\]
Recalling that $\omega\in\Omega\setminus N$ was also arbitrary, we conclude.
\end{proof}

{We close this section by proving continuity of the optimal boundary (Theorem \ref{thm:bC}), its boundedness and its asymptotic limit as $r\to\infty$ (Proposition \ref{prop:bC}).}
{\color{black}{
{It is worth noticing that for} the continuity of the boundary, we cannot use \cite[Thm.\ 10]{P19}. The second condition in Eq.\ (3.31) in the statement of that theorem fails in our case as $\frac{d \rho(r)}{dz}=0$.

\begin{theorem}\label{thm:bC}
{Consider the map $b:\R_+ \to [\alpha,+\infty]$} defined in \eqref{eq:b}. Then $r\mapsto b(r)$ is continuous.
\end{theorem}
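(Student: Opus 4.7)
The plan is to establish left-continuity of $r\mapsto b(r)$ (the right-continuity being part of Lemma \ref{lem:bnew}) by contradiction, combining Hopf's lemma applied to $v:=U_z$ with the convexity of $U$ in the $z$-variable. Suppose there is $r_0>0$ with $b(r_0-)>b(r_0)$, and pick $z_1<z^*<z_2$ in the gap $(b(r_0),b(r_0-))$. The monotonicity of $b$ gives some $\delta\in(0,r_0)$ such that the rectangle $R_\delta:=(r_0-\delta,r_0)\times(z_1,z_2)$ lies in $\cC$, while the right-continuity of $b$ at $r_0$, together with $b(r_0)<z_1$, implies $b(r)<z_1$ for $r$ slightly larger than $r_0$. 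Consequently, for any $\bar z\in(b(r_0),z^*)$ the point $(r_0,\bar z)$ admits a right-neighbourhood lying in $\mathrm{Int}\,\cS$, where $U\equiv 1$ and hence $U_r\equiv 0$. Continuity of $U_r$ on $\cO$ (Proposition \ref{prop:C1}) then forces $U_r(r_0,\bar z)=0$, and integrating $U_r(\cdot,\bar z)$ from $r_0-h$ to $r_0$ yields the Taylor-type bound
\[
U(r_0-h,\bar z)-1=o(h)\qquad (h\downarrow 0).
\]

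Next I work with $v:=U_z$. Standard interior regularity for linear elliptic PDEs upgrades Corollary \ref{cor:fbp} to $U\in C^\infty(\mathrm{Int}\,\cC)$; since neither the coefficients of $\cL$ nor $\rho$ depend on $z$, differentiating $\cL U-\rho(r)U=0$ in $z$ gives $\cL v-\rho(r)v=0$ in $\mathrm{Int}\,\cC$. Formula \eqref{eq:Uz} shows $v<0$ throughout $\cC$, while $v=0$ on $\cS$, so $v$ attains its maximum $0$ on $\overline{R_\delta}$ along the entire right edge $\{r_0\}\times(z_1,z_2)$. On $R_\delta$ the operator $\cL-\rho I$ is uniformly elliptic (as $r\ge r_0-\delta>0$) with bounded coefficients and with zeroth-order coefficient $-\rho\le 0$; moreover, the interior-ball condition is trivially satisfied at $(r_0,z^*)$ since the boundary is flat there. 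Applying Hopf's lemma to $v$ at $(r_0,z^*)$ yields a constant $c>0$ such that
\[
U_z(r_0-h,z^*)\le -c\,h\qquad \text{for all sufficiently small } h>0.
\]
Since $z\mapsto U(r,z)$ is convex (Proposition \ref{prop:Ucont}), $U_z(r_0-h,\cdot)$ is nondecreasing, and the previous inequality propagates to $U_z(r_0-h,z)\le -c\,h$ for every $z\le z^*$, bypassing any need to track uniformity of the Hopf constant along the boundary segment.

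Now fix $\bar z\in(b(r_0),z^*)$ and integrate from $\bar z$ to $z^*$:
\[
U(r_0-h,z^*)-U(r_0-h,\bar z)=\int_{\bar z}^{z^*}U_z(r_0-h,\zeta)\,d\zeta\le -c\,h\,(z^*-\bar z).
\]
Since $U(r_0-h,z^*)\ge 1$, this gives $U(r_0-h,\bar z)\ge 1+c\,h\,(z^*-\bar z)$. Combining with the earlier bound $U(r_0-h,\bar z)=1+o(h)$, dividing by $h$ and letting $h\downarrow 0$ produces $c(z^*-\bar z)\le 0$, which contradicts $c>0$ and $z^*>\bar z$. Hence $b$ is also left-continuous, and in view of Lemma \ref{lem:bnew} the claim follows.

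The main obstacle, in my view, is the choice of the right auxiliary function to which Hopf applies: the natural candidates $U-1$ or $1-U$ are a subsolution with boundary minimum, respectively a supersolution with boundary maximum, neither of which fits the standard Hopf template. Using $v=U_z$ instead is convenient because it vanishes on $\cS$ and solves the homogeneous equation on $\cC$, turning the boundary-regularity question into a clean maximum-on-boundary application of Hopf; then the convexity of $U$ in $z$ does the extra work needed to compare the resulting one-point gradient estimate with the Taylor-type bound coming from $U_r(r_0,\bar z)=0$.
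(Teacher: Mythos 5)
Your proposal is correct, but it takes a genuinely different route from the paper's proof. Both arguments proceed by contradiction from a hypothesized jump $b(r_0-)>b(r_0)$, both construct a rectangle in $\mathcal{C}$ spanning the gap, and both hinge on the $C^1$ regularity of $U$ (Proposition \ref{prop:C1}) and on $v:=U_z<0$ in $\cC$ coming from formula \eqref{eq:Uz}. From there the paper adapts a probabilistic argument (from \cite{DeA2}): it applies Dynkin's formula to $u=U_z$ along a reflected CIR process $\xi^\eps$ and a drifted Brownian motion $\zeta$, then integrates against compactly supported test functions in $z$ precisely in order to avoid relying on continuity of $U_{zr}$ at the boundary segment $\{r_0\}\times(z_1,z_2)$. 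You instead apply Hopf's boundary-point lemma to $v=U_z$, which attains a strict boundary maximum $v(r_0,z^*)=0$ over $R_\delta$ and solves the homogeneous elliptic equation $\cL v-\rho v=0$ in $\mathrm{Int}\,\cC$; Hopf delivers $U_z(r_0-h,z^*)\le -ch$, convexity of $z\mapsto U(r,z)$ propagates this bound to all $z\le z^*$, and comparison with the Taylor bound $U(r_0-h,\bar z)=1+o(h)$ (a direct consequence of $U_r(r_0,\bar z)=0$, itself forced by the $C^1$ smooth fit across $\partial_{\overline{\mathcal{O}}}\cC$) yields the contradiction. Your route is shorter and more classical, and the role of Hopf's lemma is the right one: it requires only interior $C^2$ regularity of $v$ plus continuity up to the boundary, so the very difficulty that pushes the paper into a test-function workaround (possible failure of continuity of $U_{zr}$ up to $\partial_{\overline{\mathcal{O}}}\cC$) simply does not arise. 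One small imprecision: the claim $U\in C^\infty(\mathrm{Int}\,\cC)$ is not justified under Assumption \ref{ass:rho} (where $\rho$ is only continuous), but you only need $v=U_z\in C^{2}_{loc}(\mathrm{Int}\,\cC)\cap C(\mathcal{O})$, which follows from interior Schauder estimates because Assumption \ref{ass:rho}-(iii) makes $\rho$ locally H\"older (indeed locally Lipschitz away from $r=0$); the paper invokes an analogous interior regularity statement for $u=U_z$ citing Friedman, so this is a shared and harmless reliance on classical PDE theory rather than a gap.
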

\begin{proof}
We suitably adapt the proof of \cite[Thm.\ 5.2]{DeA2} {which holds in a parabolic set-up}. We already know that $r \mapsto b(r)$ is nonincreasing and right-continuous by \eqref{eq:b-rc2} in Lemma \ref{lem:bnew}. It thus remains to prove that $r\mapsto b(r)$ is left-continuous. We argue by contradiction.
	
	Assume thus that there exists $r_0>0$ such that $b(r_0-):= \lim_{r \rightarrow r_0}b(r) > b(r_0)$. Then there also exist  $z_1, z_2$ satisfying $b(r_0)< z_1< z_2<b(r_0-)$ and $r_1 < r_0$ such that
	$$
	\Sigma:= (r_1, r_0) \times (z_1, z_2) \subset \cC, \quad \{r_0\} \times (z_1, z_2) \subset {\partial_{\overline{\mathcal{O}}}\cC}.
	$$
Now, by Proposition  \ref{prop:C1}, we know that $U \in C^1(\mathcal O)$ and that \eqref{eq:Uz} holds.
Since {$\P_{r,z}(\tau_*<+\infty,\, S_{\tau_*}>z-\alpha)>0$ for any $(r,z)\in\cC$ and $U_z$ is uniformly continuous} in any compact subset of $\mathcal{C}$, then
formula \eqref{eq:Uz} implies that there exists $\varepsilon_0 >0$ such that
\begin{equation}\label{est_Uz}
	U_z \leq - \varepsilon_0 \quad \textup{on}\,\,\,\,\partial \Sigma \cap \Big\{r \leq \frac{r_1 + r_0}{2}\Big\}=:\partial \Sigma_0.
	\end{equation}
	Moreover, by uniform continuity on any compact set,  for any $\varepsilon >0$, there exists $\delta_\varepsilon >0$ such that  $\delta_\varepsilon \rightarrow 0$ as $\varepsilon \rightarrow 0$ and
	\begin{equation}\label{est_supUrUz}
	\sup_{[r_0 - \delta_\varepsilon, r_0]\times [z_1, z_2]}(|U_r(r,z)| + |U_z(r,z)|)\leq \varepsilon.
	\end{equation}
	In particular,
	\begin{align}\label{est_Uz2}
	U_z(r_0 - \delta_\varepsilon, z) \geq -\varepsilon.
	\end{align}

Let us now set $u:=U_z$. Classical interior regularity results for PDEs (see, e.g., \cite[Thm.\ 10, Ch.\ 3, Sec.\ 5]{Friedman}) guarantee that $u \in C^2(\Sigma) \cap C(\overline \Sigma)$.
 By differentiating the PDE for $U$ given in Corollary \ref{cor:fbp}  and taking into account \eqref{est_Uz}, we get
\begin{equation}\label{PDEu}
\begin{aligned}
&\cL u(r,z)-\rho(r)u(r,z)=0,  &(r,z)\in \Sigma,\\
&u(r,z)=0, & (r,z)\in \partial \Sigma \cap {\partial_{\overline{\mathcal O}} \cC},\\
&u(r,z)\leq 0, &(r,z)\in \overline\Sigma,\\
&u(r,z)\leq -\varepsilon_0, &(r,z)\in \partial \Sigma_0.
\end{aligned}
\end{equation}

On the interval $(r_1, r_0 - \delta_\varepsilon]$ we consider a process that is equal to $(R_t)_{t \geq 0}$ away from $r_0  -\delta_\varepsilon$, it is reflected (downwards) at $r_0 - \delta_\varepsilon$, and it gets absorbed {on the portion of the boundary $\partial\Sigma\setminus\partial_{\overline{\mathcal{O}}}\cC$}. To this end,  we introduce  a  process $\xi^\varepsilon$ with dynamics
\begin{align}\label{eq:xi}
	d \xi^\varepsilon_t = k(\theta - \xi^\varepsilon_t) dt + \gamma \sqrt{\xi^\varepsilon_t} d W_t  - {d A_t^\varepsilon}, \qquad \xi^\varepsilon_0= r_0 - \delta_\varepsilon,
\end{align}
where $A^\varepsilon$ is an increasing and continuous process with $A_0^\varepsilon=0$ such that
\begin{align}\label{eq:A}
\xi^\varepsilon_t \leq r_0 - \delta_\varepsilon \quad \textup{and}\quad d A^\varepsilon_t = 1_{\{\xi^\varepsilon_t = r_0 - \delta_\varepsilon\}} d A^\varepsilon_t \quad \text{for all $t \geq 0$}.
\end{align}
The existence of $\xi^\varepsilon$ follows from standard results on reflecting diffusions, but can also be constructed as a time-change of a scaled reflected Brownian motion, see e.g. \cite{LionsSznitman} or \cite[Sec. 12, Chapter I]{Bass} for more details. Let
\begin{equation}\label{zeta}
\zeta_t := Z_t^0 = z + \mu t + \sigma B_t, \quad \text{for $z \in (z_1, z_2)$},
\end{equation}
and set
\begin{equation}\label{tau_sigma_eps}
\tau_\Sigma^\varepsilon := \inf\{t \geq 0: \,\,(\zeta_t, \xi^\varepsilon_t) {\in \partial\Sigma\setminus\partial_{\overline{\mathcal{O}}}\cC}\}.
\end{equation}
Then, the {process $(\xi^\varepsilon_{t \wedge \tau^\varepsilon_\Sigma},\zeta_{t\wedge \tau^\varepsilon_\Sigma})_{t\geq 0}$ evolves in the rectangle $(r_1,r_0-\delta_\eps]\times(z_1,z_2)$, it is reflected horizontally (inward) at each time $\xi^\eps$ hits $r_0-\delta_\eps$ and it is absorbed upon reaching the portion of boundary $\partial\Sigma\setminus\partial_{\overline{\mathcal{O}}}\cC$. Notice also that $\E[\tau^\eps_\Sigma]<\infty$ since it is dominated by the exit time of $\zeta$ from the bounded interval $[z_1,z_2]$}.

Let us now apply Dynkin's formula to $e^{-\int_0^\cdot \rho(\xi^\varepsilon_u) du} u(\xi^\varepsilon_\cdot, \zeta_\cdot)$ on the (random) time interval $[0, \tau_\Sigma^\varepsilon]$ {and use the first equation in \eqref{PDEu}}:
\begin{align}\label{Dynkin_u}
\E\Big[e^{-\int_0^{\tau^{\varepsilon}_\Sigma}\rho(\xi^\varepsilon_u)du} u(\xi^{\varepsilon}_{\tau^{\varepsilon}_\Sigma},\zeta_{\tau^{\varepsilon}_\Sigma})\Big] &= u(r_0 - \delta_\varepsilon,z) - \E\Big[\int_0^{\tau^{\varepsilon}_\Sigma}e^{-\int_0^{t}\rho(\xi^\varepsilon_u)du}u_r(\xi^{\varepsilon}_t,\zeta_t)\,dA^\varepsilon_t\Big]\notag\\
 &= u(r_0 - \delta_\varepsilon,z) - \E\Big[\int_0^{\tau^{\varepsilon}_\Sigma}e^{-\int_0^{t}\rho(\xi^\varepsilon_u)du}u_r(r_0 - \delta_\varepsilon,\zeta_t)\,dA^\varepsilon_t\Big],
\end{align}
where in the second equality we used \eqref{eq:A}.
{From the final condition in \eqref{PDEu} and using that $\rho \ge 0$ is bounded on $\overline \Sigma$}, on the left-hand side of \eqref{Dynkin_u} we have
\begin{align}\label{Dynkin_u_left}
\E\Big[e^{-\int_0^{\tau^{\varepsilon}_\Sigma}\rho(\xi^\varepsilon_u)du} u(\xi^{\varepsilon}_{\tau^{\varepsilon}_\Sigma},\zeta_{\tau^{\varepsilon}_\Sigma})\Big] &\leq - \varepsilon_0 \, \E\Big[e^{-\int_0^{\tau^{\varepsilon}_\Sigma}\rho(\xi^\varepsilon_u)du} 1_{\{(\xi^{\varepsilon}_{\tau^{\varepsilon}_\Sigma},\zeta_{\tau^{\varepsilon}_\Sigma})\in \partial \Sigma_0\}}\Big]\\
&\leq - \varepsilon_0\,C\,\P((\xi^{\varepsilon}_{\tau^{\varepsilon}_\Sigma},\zeta_{\tau^{\varepsilon}_\Sigma})\in \partial \Sigma_0),\notag
\end{align}
{for some constant $C>0$ only depending on $\Sigma$}.
{Thanks to \eqref{est_Uz2}, on} the right-hand side of \eqref{Dynkin_u} we get
\begin{align}\label{Dynkin_u_right}
 &u(r_0 - \delta_\varepsilon,z) - \E\Big[\int_0^{\tau^{\varepsilon}_\Sigma}e^{-\int_0^{t}\rho(\xi^\varepsilon_u)du}u_r(r_0 - \delta_\varepsilon,\zeta_t)\,d A^\varepsilon_t\Big]\\
 & \geq - \varepsilon - \E\Big[\int_0^{\tau^{\varepsilon}_\Sigma}e^{-\int_0^{t}\rho(\xi^\varepsilon_u)du}U_{zr}(r_0 - \delta_\varepsilon,\zeta_t)\,d A^\varepsilon_t\Big].\notag
\end{align}
Collecting \eqref{Dynkin_u_left}-\eqref{Dynkin_u_right}, we obtain
\begin{align}\label{ineq_final}
- \varepsilon_0 \,C\, \P((\xi^{\varepsilon}_{\tau^{\varepsilon}_\Sigma},\zeta_{\tau^{\varepsilon}_\Sigma})\in \partial \Sigma_0)\geq - \varepsilon - \E\Big[\int_0^{\tau^{\varepsilon}_\Sigma}e^{-\int_0^{t}\rho(\xi^\varepsilon_u)du}U_{zr}(r_0 - \delta_\varepsilon,\zeta_t)\,dA^\varepsilon_t\Big].
\end{align}

{Next we want to take limits in \eqref{ineq_final} as $\varepsilon \rightarrow 0$. In order to avoid potential difficulties with the continuity of $U_{zr}$ at the boundary $\{r_0\}\times(z_1,z_2)\subset\partial_{\overline{\mathcal O}}\cC$}, we adopt an approach using test functions. Let us take $\varphi \in C^\infty_c((z_1, z_2))$, $\varphi \geq 0$. {Thanks to \eqref{tau_sigma_eps} we can write  $\tau_\Sigma^\varepsilon= \tau_1(z) \wedge \tau_2(z) \wedge \eta^\varepsilon$, where
\begin{align}
\eta^\varepsilon&:=\inf \{t \geq 0:\,\,\xi^\varepsilon_t \leq r_1\},\label{etavarepsilon}\\
\tau_1(z) &:= \inf \{t \geq 0:\,\,\zeta_t \leq z_1\},\qquad\tau_2(z):=\inf \{t \geq 0:\,\,\zeta_t \geq z_2\}.\notag
\end{align}
and notice that $\eta^\eps$ is independent of the initial condition $z$ for the process $\zeta_t$}.
{Multiplying \eqref{ineq_final} by $\varphi(z)$, integrating over $(z_1,z_2)$ and using Fubini's theorem, we get}

\begin{align}\label{ineq_testfunctions_2}
&- \varepsilon_0 \,C\!\! \int_{z_1}^{z_2}\!\!\varphi(z)\P \Big((\xi^{\varepsilon}_{\tau^{\varepsilon}_\Sigma},\zeta_{\tau^{\varepsilon}_\Sigma})\in \partial \Sigma_0\Big)dz\\
&\geq\! - \varepsilon\! -\E\bigg[\!\int_0^{\eta^{\varepsilon}}\!\!e^{-\int_0^{t}\rho(\xi^\varepsilon_u)du} \bigg(\!\int_{z_1}^{z_2}\!\!1_{\{t < (\tau_1\wedge \tau_2)(z)\}}\,U_{zr}(r_0\! -\! \delta_\varepsilon,z\! +\! \mu t\! +\! B_t)\,\varphi(z)\,dz\!\bigg)dA^\varepsilon_t\bigg].\notag
\end{align}

{The mapping} $z \mapsto \tau_1(z, \omega)$ (resp. $z \mapsto \tau_2(z, \omega)$) is increasing and continuous {for $\P$-a.e.-$\omega$} (resp.\ decreasing, continuous). {Monotonicity is by pathwise comparisons whereas continuity is a known result for one dimensional diffusions (it may also be deduced by arguments analogous to those in Lemma \ref{lem:reg-b}).}
It follows that $z \mapsto \tau_1(z) \wedge \tau_2(z)$ is $\P$-a.s.\ continuous and it changes its monotonicity at most once. In particular, for any $\omega \in \Omega \setminus N$ {with $\P(N)=0$}, there exist $\underline z(t,\omega)$ and $\bar z(t, \omega)$ satisfying $z_1 < \underline z(t,\omega) < \bar z(t, \omega) < z_2$ and such that
\begin{align}\label{barz}
\{z \in (z_1, z_2) :\,\,\tau_1(z, \omega) \wedge \tau_2(z, \omega)>t\}=(\underline z(t,\omega), \bar z(t, \omega)).
\end{align}
Plugging \eqref{barz} into \eqref{ineq_testfunctions_2} we obtain
\begin{align*}
&- \varepsilon_0 \, C\,\int_{z_1}^{z_2}\varphi(z)\P\Big((\xi^{\varepsilon}_{\tau^{\varepsilon}_\Sigma},\zeta_{\tau^{\varepsilon}_\Sigma})\in \partial \Sigma_0\Big)dz\\
&\geq - \varepsilon -\E\Big[\int_0^{\eta^{\varepsilon}}e^{-\int_0^{t}\rho(\xi^\varepsilon_u)du} \Big(\int_{\underline z(t, \omega)}^{\bar z(t, \omega)}\,U_{zr}(r_0 - \delta_\varepsilon,z + \mu t + B_t)\,\varphi(z)\,dz\Big)dA^\varepsilon_t\Big].\notag
\end{align*}
Integrating by parts, recalling \eqref{est_supUrUz} and using that $(\underline z(t,\omega), \bar z(t, \omega)) \subset (z_1, z_2)$, we get
\begin{align*}
&\Big|\int_{\underline z(t, \omega)}^{\bar z(t, \omega)}\,U_{zr}(r_0 - \delta_\varepsilon,z + \mu t + B_t)\,\varphi(z)\,dz \Big|\\
&=\Big |U_{r}(r_0 - \delta_\varepsilon,z + \mu t + B_t)\,\varphi(z)\Big|_{\underline z(t, \omega)}^{\bar z(t, \omega)}-\int_{\underline z(t, \omega)}^{\bar z(t, \omega)}\,U_{r}(r_0 - \delta_\varepsilon,z + \mu t + B_t)\,\varphi'(z)\,dz\Big|\\
&\leq \varepsilon (z_2 - z_1)(||\varphi||_{\infty} + ||\varphi'||_{\infty}),
\end{align*}
where $||\cdot||_{\infty}$ denotes the supremum norm on $(z_1, z_2)$. {Since this bound is deterministic an independent of $z$, when we plug it back into the integral with respect to $dA^\varepsilon_t$ we obtain}
\begin{align*}
&- \varepsilon_0 \, C\,\int_{z_1}^{z_2}\varphi(z)\P\Big((\xi^{\varepsilon}_{\tau^{\varepsilon}_\Sigma},\zeta_{\tau^{\varepsilon}_\Sigma})\in \partial \Sigma_0\Big)dz\geq - \varepsilon\Big(1 + (z_2 - z_1)(||\varphi||_{\infty} + ||\varphi'||_{\infty})\E[A_{\eta^\varepsilon}^\varepsilon]\Big).\notag
\end{align*}
From the integral form of $\xi^\varepsilon$ we obtain
\begin{align*}
\E[A_{\eta^\varepsilon}^\varepsilon]  &= \E\Big[r_0 - \delta_\varepsilon  +\int_0^{\eta^\varepsilon} k(\theta - \xi^\varepsilon_s) ds 
 \Big]- \E[\xi^\varepsilon_{\eta^\varepsilon}]\leq r_0 + k\theta\E[\eta^\varepsilon] - r_1.
\end{align*}
{By construction, $\xi^\eps\le R^{r_0}$ for any $\eps>0$.
Then $\E[\eta^\eps]\le \E[\eta]$, where $\eta:=\inf \{t \geq 0:\,\, R^{r_0}_t\leq r_1\}$,
and the expectation of the latter hitting time is finite since the CIR process is positively recurrent {\color{black}{(cf.\ Section 12 in Chapter II of \cite{BorSal})}}. Then we have $\E[A^\eps_{\eta^\eps}]\le C_1$, for a constant $C_1>0$ independent of $\eps$.}

Finally, we get
\begin{align*}
\varepsilon_0 \, C\,\int_{z_1}^{z_2}\varphi(z)\P\Big((\xi^{\varepsilon}_{\tau^{\varepsilon}_\Sigma},\zeta_{\tau^{\varepsilon}_\Sigma})\in \partial \Sigma_0\Big)dz\leq  \varepsilon \Big(1+ C_1(z_2 - z_1)(||\varphi||_{\infty} + ||\varphi'||_{\infty})\Big).
\end{align*}
Then, taking limits as $\varepsilon$ goes to zero, the previous inequality yields
\begin{align}\label{ineq_testfunctions_lim}
 \limsup_{\varepsilon \rightarrow 0} \int_{z_1}^{z_2}\varphi(z)\P\Big((\xi^{\varepsilon}_{\tau^{\varepsilon}_\Sigma},\zeta_{\tau^{\varepsilon}_\Sigma})\in \partial \Sigma_0\Big)dz\leq  0.
\end{align}
{We now show that the above inequality leads to a contradiction. N}otice that
$$
\P\Big((\xi^{\varepsilon}_{\tau^{\varepsilon}_\Sigma},\zeta_{\tau^{\varepsilon}_\Sigma})\in \partial \Sigma_0\Big) \geq \P\Big(\eta^\varepsilon < (\tau_1 \wedge \tau_2)(z)\Big)
$$
{and, letting $\mu(dt;z)$ denote the (well-known) law of $(\tau_1\wedge \tau_2)(z)$, we have
$$
\P\Big(\eta^\varepsilon < (\tau_1 \wedge \tau_2)(z)\Big) = \int_0^\infty \P(\eta^\varepsilon < t)\, \mu(dt;z),
$$
by independence of $\eta^\eps$ and $\tau_1\wedge\tau_2$.}

{Since $\xi^\eps\le R^{r_0}$ by pathwise comparison, then
$$
\P(\eta^\varepsilon < t)\ge\P(\eta < t),\qquad\text{for all $\eps>0$}.
$$
with $\eta$ introduced above.
Therefore we have}
$$
\liminf_{\varepsilon \rightarrow 0} \P\Big(\eta^\varepsilon < (\tau_1 \wedge \tau_2)(z)\Big)\color{black}\geq \int_0^\infty \P(\eta < t)\, \mu(dt;z):=f(z) >0,\quad \forall z \in (z_1, z_2).
$$
Then, from \eqref{ineq_testfunctions_lim} and Fatou's lemma we reach a contradiction. {Thus $r\mapsto b(r)$ is continuous.}
\end{proof}
}}

\begin{proposition}
\label{prop:bC}
One has:
\begin{itemize}
\item[(i)] $b(r) < + \infty$ for all $r>0$;
\item[(ii)] {\color{black}{if $\rho(r) \geq c_1$ for some $c_1 >0$, then $b(r) \leq z^{\star}_{c_1}$ for all $r\geq 0$, where $z^{\star}_{c_1} \in (\alpha, \infty)$ is the free boundary of the optimal stopping problem \eqref{eq:U2} with $\rho(r)\equiv c_1$;}}
\item[(iii)] if $\rho(r) \geq c_2 r$ for some $c_2 >0$, then $\lim_{r\uparrow \infty}b(r)=\alpha$.
\end{itemize}
\end{proposition}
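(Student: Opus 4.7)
I would tackle the three items in the order (ii), (i), (iii), since (ii) yields a one-dimensional benchmark that feeds into the arguments for the others.

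For (ii), the plan is a pathwise domination by a one-dimensional constant-discount problem. Since $\rho(r) \geq c_1 > 0$ gives $\int_0^\tau \rho(R^r_s) ds \geq c_1 \tau$, we obtain
\[
U(r,z) \leq \tilde U(z) := \sup_{\tau \geq 0} \E\left[e^{\lambda\,[(z-\alpha)\vee S_\tau - (z-\alpha)] - c_1 \tau}\right],
\]
which is precisely the value of problem \eqref{eq:U2} with $\rho \equiv c_1$ and is independent of $r$. Being one-dimensional it admits a closed-form solution: on $[\alpha, z^{\star}_{c_1})$ one writes $\tilde U(z) = A\,e^{\theta_-(z-\alpha)} + B\,e^{\theta_+(z-\alpha)}$ with $\theta_\pm = (-\mu \pm \sqrt{\mu^2 + 2c_1\sigma^2})/\sigma^2$, and the smooth-fit conditions at $z^{\star}_{c_1}$ together with the boundary condition $\tilde U'(\alpha+) = -\lambda\,\tilde U(\alpha)$ (which follows from Corollary \ref{cor:elb} applied to this simpler problem) uniquely pin down $A$, $B$ and a finite threshold $z^{\star}_{c_1}\in(\alpha,\infty)$; above it, $\tilde U \equiv 1$. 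Combining $U \leq \tilde U$ with $U \geq 1$ yields $U(r,z)=1$ on $\{z \geq z^{\star}_{c_1}\}$, i.e., $z \in \cS_r$, hence $b(r) \leq z^{\star}_{c_1}$ for every $r \geq 0$.

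For (i), the case $c_1>0$ is immediate from (ii); otherwise $c_1=0$ and $c_2>0$, and I fix $r>0$. An integration by parts on $e^{\lambda X_\tau - \int_0^\tau \rho(R_s)ds}$ (with $X_t := (z-\alpha)\vee S_t - (z-\alpha)$), along the lines of \eqref{eq:Ub1}, followed by the strong Markov property of $(Y,R)$ at $\sigma:=\inf\{t\geq 0: Y_t = z-\alpha\}$ (noting $K_\sigma=\alpha$) leads to
\[
U(r,z) - 1 \leq (h_0-1)\,\E_r\bigl[\mathds{1}_{\sigma<\infty}\,e^{-c_2\int_0^{\sigma} R_u du}\bigr].
\]
By independence of $B$ and $W$ and the CIR Laplace transform \eqref{Laplace}, the right-hand side vanishes as $z\to\infty$; this alone only shows $U(r,z)\to 1$. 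To upgrade to $U(r,z)=1$ I would localize at $\sigma_{r/2}:=\inf\{t: R^r_t \leq r/2\}$: on $[0,\sigma_{r/2}]$ one has $\rho(R^r_s) \geq c_2 r/2 > 0$, so the supersolution argument developed for (iii) (applied with constant rate $c_2 r/2$), combined with a dynamic-programming step at $\sigma_{r/2}$, forces $U(r,z)\leq 1$ and hence equality for $z$ above a sufficiently large (finite) threshold depending on $r$.

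For (iii), the monotonicity from Lemma \ref{lem:bnew} ensures the limit $b_\infty:=\lim_{r\to\infty}b(r)\in[\alpha,+\infty]$ exists; the plan is to show $b_\infty\leq\alpha$. For each $M>0$ let $\tilde U_M$ denote the constant-rate value from (ii) with $c_1$ replaced by $c_2 M$; the identity $\theta_+ + \theta_- = -\lambda$ (direct from $\lambda = 2\mu/\sigma^2$) leads to an explicit asymptotic $z^{\star}_{c_2 M}-\alpha \sim \mu/(c_2 M)$, so $z^{\star}_{c_2 M}\downarrow\alpha$ as $M\to\infty$. On the region $\{r\geq M\}$ the bound $\rho(r)\geq c_2 M$ makes $\tilde U_M$ a supersolution of the variational inequality for $U$ (it solves the corresponding PDE on its continuation region and $(\mathcal L - \rho)\tilde U_M\leq 0$ on $\{r\geq M\}$). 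Applying It\^o/Dynkin to $e^{\lambda X_t - \int_0^t\rho(R_s)ds}\tilde U_M(K_t)$ on $[0,\tau\wedge \sigma_M]$ with $\sigma_M:=\inf\{t: R^r_t\leq M\}$, and using $\tilde U_M'(\alpha+) = -\lambda \tilde U_M(\alpha)$ to handle the local-time term of $K$ at the reflection point, one obtains
\[
U(r,z) \leq \tilde U_M(z) + C\,\E_r\bigl[e^{-c_2 M\sigma_M}\bigl(1+e^{\lambda S_{\sigma_M}}\bigr)\bigr].
\]
The correction term vanishes as $r\to\infty$ by the CIR Laplace transform, since $\sigma_M\to\infty$ in distribution. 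Therefore, for $z>z^{\star}_{c_2 M}$ and $r$ sufficiently large, $U(r,z)\leq 1$, hence $U(r,z)=1$ and $b(r)\leq z^{\star}_{c_2 M}$; letting $M\to\infty$ gives $b_\infty\leq\alpha$ and closes the argument. The main technical obstacle is the rigorous It\^o expansion accommodating the local time of $K$ at $\alpha$ and the piecewise $C^2$ nature of $\tilde U_M$ across $z^{\star}_{c_2 M}$, which I would handle by a standard mollification of $\tilde U_M$.
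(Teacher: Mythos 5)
Your item (ii) is essentially the paper's argument: dominate $U(r,z)$ by the constant-rate one-dimensional value $\tilde U(z)$ and invoke finiteness of its free boundary. The paper does not write out the explicit $A,B,\theta_\pm$ computation, it simply appeals to the finite free boundary of the 1D constant-rate problem, but your route reaches the same conclusion and is sound.

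Your plan for (i) when $c_1=0$ and for (iii), however, has a genuine gap. In both you aim to show $U(r,z)=1$ by a supersolution/localization estimate of the form
\begin{equation*}
U(r,z)\ \le\ \tilde U_M(z)\ +\ C\,\E_r\!\left[e^{-c_2 M\sigma_M}\bigl(1+e^{\lambda S_{\sigma_M}}\bigr)\right],
\end{equation*}
where the correction term accounts for the continuation value after $R$ drops below $M$. For $z$ beyond the constant-rate free boundary this gives $U(r,z)\le 1+\varepsilon_r$ with $\varepsilon_r\to 0$ as $r\to\infty$. But for every \emph{fixed} $r$ the correction term is strictly positive (the continuation value after $\sigma_M$ cannot be discarded, and $\E_r[e^{-c_2M\sigma_M}]>0$), so you only obtain $U(r,z)<1+\varepsilon$, never $U(r,z)\le 1$. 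Convergence of $U(r,z)$ to $1$ does not force membership of $(r,z)$ in the stopping set, which is the statement $b(r)<\infty$, resp.\ $b(r)\to\alpha$, actually requires. The same issue sinks the $\sigma_{r/2}$-localization in part (i).

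The paper sidesteps this by proving (i) and (iii) \emph{by contradiction}, applying the (super)martingale property of $\Lambda$ at a stopping time that keeps the process inside the \emph{assumed} continuation region. For (i): if $b(r_0)=+\infty$, then for $r<r_0$, the hitting time $\hat\tau=\inf\{t:R^r_t\geq r_0\}$ satisfies $\hat\tau\leq\tau_*$, and $1<U(r,z)=\E[e^{\lambda X_{\hat\tau}-\int_0^{\hat\tau}\rho(R^r)ds}\,U(R^r_{\hat\tau},K^z_{\hat\tau})]$; letting $z\to\infty$ (using the preliminary bound $U(r,z)\le 1+h_0 e^{-\lambda(z-\alpha)}$, which you also derive) gives $1\le\E[e^{-\int_0^{\hat\tau}\rho(R^r_s)ds}]<1$, a contradiction. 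For (iii): if $b_\infty>\alpha$, pick a strip $(z_1,z_2)$ inside $\cC$ and let $\hat\sigma$ be the exit time of $K$ from the strip; then $1<U(r,z)\le h_0\,\E[e^{\lambda S_{\hat\sigma}-A_{c_2}(\hat\sigma)-rG_{c_2}(\hat\sigma)}]$, which tends to $0$ as $r\to\infty$ since $G_{c_2}(\hat\sigma)>0$ a.s.\ — again a contradiction. The key difference is that in the contradiction argument you only need to show $U(r,z)<1$ at a single point of the hypothetical continuation set, and for that a bound that decays to zero suffices; you never need the exact identity $U=1$. You should replace your upgrade steps in (i) and (iii) with this contradiction scheme.
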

\begin{proof}
We prove each item separately.
\vspace{0.25cm}

(i) Suppose that there exists $r_0>0$ such that $b(r_0)= + \infty$. Then, by monotonicity, $b(r) = + \infty$ for all $r\in [0,r_0)$. Then take $r \in [0,r_0)$ and set $\widehat{\tau}:=\inf\{t\geq0: R_t ^r \geq r_0\}$, $\P$-a.s. Clearly, $\widehat{\tau} \leq \tau_*$ $\P_{r,z}$-a.s.\ for all $z\geq \alpha$, and therefore the superharmonic property property of the value $U$ (cf.\ \eqref{eq:supm} and \eqref{eq:m}) implies that
\begin{align}
\label{limit-a}
1 <& U(r,z) = \E\left[ e^{\lambda\,\left((z-\alpha) \vee S_{\widehat{\tau}}-(z-\alpha)\right) - \int_0^{\widehat{\tau}} \rho(R^r_s) \,ds} U(R^r_{\widehat{\tau}},K^z_{\widehat{\tau}})\right] \\
\leq& \E\left[ \mathds{1}_{\{S_{\widehat{\tau}} \geq z-\alpha\}}e^{\lambda\,\left(S_{\widehat{\tau}}-(z-\alpha)\right)-\int_0^{\hat \tau}\rho(R^r_s)ds} h_0\right]\notag\\
& + \E\left[ \mathds{1}_{\{S_{\widehat{\tau}} < z-\alpha\}}e^{-\int_0^{\widehat{\tau}} \rho(R^r_s) \,ds} U(R^r_{\widehat{\tau}},K^z_{\widehat{\tau}})\right]\notag\\
\leq & e^{-\lambda\,(z-\alpha)} \E\left[e^{\lambda\,S_{\widehat{\tau}}-\int_0^{\hat \tau}\rho(R^r_s)ds} h_0\right] + \E\left[e^{-\int_0^{\widehat{\tau}} \rho(R^r_s) \,ds} U(R^r_{\widehat{\tau}},K^z_{\widehat{\tau}})\right].\notag
\end{align}
By noticing that $\widehat{\tau}$ does not depend on $z$, recalling \eqref{eq:sup}, and taking limits as $z \uparrow \infty$
we obtain
$$
\lim_{z \rightarrow \infty} e^{-\lambda\,(z-\alpha)} \E\left[e^{\lambda\,S_{\widehat{\tau}} -\int_0^{\hat \tau}\rho(R^r_s)ds} h_0\right]=0.
$$
On the other hand, for any $r \in  [0,\,r_0]$ we have
\begin{align*}
1 <
U(r,z)  &= \sup_{\tau \geq 0} \E\left[ e^{\lambda\,\left((z-\alpha) \vee S_\tau-(z-\alpha)\right) - \int_0^\tau \rho(R^r_s) \,ds}\right]\\
& \leq \sup_{\tau \geq 0} \E\left[ \mathds{1}_{\{S_{\tau} \geq z-\alpha\}} \, e^{\lambda\,\left(S_\tau-(z-\alpha)\right) - \int_0^\tau \rho(R^r_s) \,ds}\right]+ \sup_{\tau \geq 0} \E\left[ \mathds{1}_{\{S_{\tau} < z-\alpha\}} \, \right]\\
& \leq e^{-\lambda(z-\alpha)} \sup_{\tau \geq 0} \E\left[ e^{\lambda\,S_\tau - \int_0^\tau \rho(R^r_s) \,ds}\right] + 1\\
& \leq h_0 \, e^{-\lambda(z-\alpha)}  + 1.
\end{align*}
It follows that $\lim_{z \rightarrow +\infty} U(r,z) =1$ for any $r \in  [0,\,r_0]$.
Recalling that $\lim_{z \rightarrow +\infty} K^z_t = +\infty$ a.s., and noticing that the CIR process is positively recurrent,   this in turn yields
$$
\lim_{z \rightarrow \infty} U(R^r_{\widehat{\tau}},K^z_{\widehat{\tau}}) =1 \,\,\textup{a.s.}
$$
Thus, applying the Lebesgue dominated convergence theorem in \eqref{limit-a}, we get
$$
1 \leq  \E\left[e^{-\int_0^{\widehat{\tau}} \rho(R^r_s) \,ds} \right].
$$
Being
 $\P_r(\widehat{\tau}>0)>0$ for any $r \in [0,r_0)$,
we reach a contradiction.

\vspace{0.25cm}

(ii) Assume that $\rho(r) \geq c_1$ for some $c_1 >0$. Because
$$U(r,z) \leq \sup_{\tau\geq0} \E\left[ e^{\lambda\,\left((z-\alpha) \vee S_{\widehat{\tau}}-(z-\alpha)\right) - c_1 \tau}\right]=:v(z;c_1),$$
one has for any $r\geq0$ that
$$\{z > \alpha:\, z \geq b(r)\} = \{z> \alpha:\, U(r,z) = 1\} \supseteq \{z> \alpha:\, v(z;c_1) = 1\}.$$
Notice now that $v(z;c_1) \le e^{\lambda(z-\alpha)}\overline v$ for some constant $\overline v>0$ for all $z\geq0$ (cf.\ \eqref{eq:new}), and that $\{z> \alpha:\, v(z;c_1) = 1\} = \{z> \alpha:\, z \geq z^{\star}_{c_1}\}$ for some $z^{\star}_{c_1} \in (\alpha,\infty)$. Hence we conclude that $b(r) \leq z^{\star}_{c_1}$.
\vspace{0.25cm}

(iii) Assume that $\rho(r) \geq c_2 r$ for some $c_2 >0$. To prove that $\lim_{r\uparrow \infty}b(r)=\alpha$ we argue by contradiction and we suppose that $b_{\infty}:=\lim_{r\uparrow \infty}b(r)>\alpha$. Then take $z_1,z_2$ such that $\alpha < z_1<z_2 < b_{\infty}$ and for $z \in (z_1,z_2)$ and $r \geq 0$ set $\widehat{\sigma}:=\inf\{t\geq 0: K_t \notin (z_1,z_2)\}$ $\P_z$-a.s.
Clearly, $\R_+ \times (z_1,z_2) \subset \mathcal{C}$, and therefore $\widehat{\sigma} \leq \tau_*$ $\P_{r,z}$-a.s., and this fact implies that (see \eqref{eq:m})
\begin{align}
\label{limit3}
 1 &< U(r,z) = \E\left[ e^{\lambda\,\left((z-\alpha) \vee S_{\widehat{\sigma}}-(z-\alpha)\right) - \int_0^{\widehat{\sigma}} \rho(R^r_s) \,ds} U(R^r_{\widehat{\sigma}}, K^z_{\widehat{\sigma}})\right] \nonumber \\
& \leq h_0 \E\left[ e^{\lambda\,\left((z-\alpha) \vee S_{\widehat{\sigma}}-(z-\alpha)\right) - c_2 \int_0^{\widehat{\sigma}} R^r_s \,ds}\right] \\
& = h_0 \E\left[ e^{\lambda S_{\widehat{\sigma}} - A_{c_2}(\widehat{\sigma}) - r G_{c_2}(\widehat{\sigma})}\right]. \nonumber
\end{align}
Here, \eqref{eq:U-bound} has been used for the penultimate step, while the independence of the Brownian motions $W$ and $B$ led to the last equality, together \eqref{Laplace} and \eqref{GA}.
Since the last expectation on the right-hand side of \eqref{limit3}
 can be made arbitrarily small by taking $r$ sufficiently large, we reach a contradiction and we have thus proved that $\lim_{r\uparrow \infty}b(r)=\alpha$.
%
\end{proof}


\section{Solution to the dividend problem}
\label{sec:solution}

In this section we show that we can find a couple $(v,a)$ that satisfies all the assumptions in Theorem \ref{thm:verif}, hence we obtain a full solution to problem \eqref{eq:Vbis}.

Let us define the function $v:\overline{\mathcal{O}}\rightarrow \R_+$ as follows
\begin{align}\label{eq:v}
v(r,z):=\int_\alpha^z U(r,y)dy.
\end{align}
Using Proposition \ref{prop:C1} we obtain that the functions $v_z$, $v_{zz}$, $v_r$ and $v_{zr}$ are continuous on $\mathcal{O}$.

\begin{proposition}
\label{prop:vrr}
The function $v$ has a weak derivative $v_{rr} \in L^{\infty}_{loc}(\mathcal{O})$. Moreover, we can select an element of the equivalence class of $v_{rr} \in L^{\infty}_{loc}(\mathcal{O})$ (still denoted by $v_{rr}$) such that
\begin{align}
\label{eq:vrr}
&v_{rr}(r,z)\\
&=\mathds{1}_{\{b_-(r)\ge \alpha\}}\frac{2}{\gamma^2}\left(\int_\alpha^{b_-(r)\wedge z}\Big[\rho(r)U(r,y)-\mu U_z(r,y)-k(\theta-r)U_r(r,y)\Big]dy\right)r^{-1}\notag\\
&\hspace{1.5cm} -\mathds{1}_{\{b_-(r)\ge \alpha\}}\frac{\sigma^2}{\gamma^2}\left(U_{z}(r,z\wedge b_-(r))-U_z(r,\alpha+)\right)r^{-1},\notag
\end{align}
where $b_-(\cdot):=\lim_{\varepsilon \downarrow 0}b(\cdot-\varepsilon)$.
\end{proposition}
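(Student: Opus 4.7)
The plan is to compute $v_{rr}$ by differentiating twice the integral representation $v(r,z)=\int_\alpha^z U(r,y)\,dy$, exploiting the global $C^1$ regularity of $U$ (Proposition \ref{prop:C1}) together with the PDE solved by $U$ on $\mathrm{Int}\,\mathcal{C}$ (Corollary \ref{cor:fbp}). Since $U(r,\cdot)$ is nonincreasing and identically one on $[b(r),+\infty)$, one has $U_r(r,y)=0$ for $y\ge b(r)$, hence $v_r(r,z)=\int_\alpha^{z\wedge b(r)}U_r(r,y)\,dy$. On $\mathrm{Int}\,\mathcal{C}$ the equation $\mathcal{L}U=\rho\,U$ can be solved for the highest-order $r$-derivative,
\[
U_{rr}(r,y)=\frac{2}{\gamma^2 r}\Big[\rho(r)U(r,y)-\mu U_z(r,y)-k(\theta-r)U_r(r,y)-\frac{\sigma^2}{2}U_{zz}(r,y)\Big],
\]
while $U_{rr}\equiv 0$ on $\mathrm{Int}\,\mathcal{S}$.

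The key step is to show that for each fixed $y>\alpha$ the map $r\mapsto U_r(r,y)$ is absolutely continuous, with weak derivative almost everywhere equal to the expression above. Monotonicity of $b$ (Lemma \ref{lem:bnew}) splits $(0,\infty)$ by at most a single transition point $r_y$ into two sub-intervals along which $(r,y)$ belongs to $\mathrm{Int}\,\mathcal{C}$ and $\mathrm{Int}\,\mathcal{S}$ respectively; $U_r(\cdot,y)$ is classically $C^1$ on each, and the $C^1$ smooth-fit of $U$ (Proposition \ref{prop:C1}) guarantees continuity across $r_y$. An application of Fubini's theorem to the identity $v_r(r_2,z)-v_r(r_1,z)=\int_\alpha^z[U_r(r_2,y)-U_r(r_1,y)]\,dy$ then identifies the weak derivative of $v$ in the form
\[
v_{rr}(r,z)=\int_\alpha^{z\wedge b_-(r)}U_{rr}(r,y)\,dy,
\]
with $b_-$ harmlessly covering the null-measure set $\{y=b(r)\}$ (and coinciding with $b$ thanks to Theorem \ref{thm:bC}). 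Substituting the PDE expression for $U_{rr}$ and integrating the $U_{zz}$-contribution via the fundamental theorem of calculus, justified because $U_z\in C(\mathcal{O})$ and $(\alpha,z\wedge b_-(r))\subset\{y:(r,y)\in\mathrm{Int}\,\mathcal{C}\}$, yields exactly \eqref{eq:vrr}. The indicator $\mathds{1}_{\{b_-(r)\ge\alpha\}}$ equals one throughout $\mathcal{O}$ by Lemma \ref{lem:bnew}, and local boundedness of $v_{rr}$ follows immediately since on any compact $K\subset\mathcal{O}$ the factor $r^{-1}$ and all the continuous integrands involved are bounded.

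The chief obstacle is the jump of $U_{rr}$ across the free boundary $\partial_{\overline{\mathcal{O}}}\mathcal{C}$: from the continuation side, $U$, $U_r$, $U_z$ attain the values $1,0,0$ on the boundary (so $U_{rr}$ has the finite nonzero limit $\tfrac{2\rho(r)}{\gamma^2 r}-\tfrac{\sigma^2}{\gamma^2 r}U_{zz}(r,b(r)^-)$), whereas from the stopping side $U_{rr}\equiv 0$. Hence $U_{rr}$ is merely locally bounded, not continuous, and a direct interchange of derivative and integral in $v_r=\int U_r\,dy$ is not permitted. The smooth-fit property $U\in C^1$ from Proposition \ref{prop:C1} is precisely what restores the absolute continuity of $r\mapsto U_r(r,y)$ despite the jump of its derivative, and is therefore what legitimizes the weak-derivative computation above.
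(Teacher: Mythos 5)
Your overall strategy is the same as the paper's (differentiate $v$ twice in $r$, use $U_r\equiv 0$ on $\mathcal{S}$ to restrict the integral to $(\alpha,z\wedge b(r))$, substitute the PDE for $U_{rr}$ on $\mathrm{Int}\,\mathcal{C}$, and integrate the $U_{zz}$-contribution in $y$ to produce $U_z(r,z\wedge b(r))-U_z(r,\alpha+)$). But the route you take to justify the differentiation is pointwise, and it contains a genuine gap.

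The gap is in the paragraph where you assert that, from the continuation side, $U_{rr}$ ``has the finite nonzero limit $\tfrac{2\rho(r)}{\gamma^2 r}-\tfrac{\sigma^2}{\gamma^2 r}U_{zz}(r,b(r)^-)$'', and that the smooth-fit $C^1$ property ``restores the absolute continuity of $r\mapsto U_r(r,y)$.'' Nothing in Proposition \ref{prop:C1} or Corollary \ref{cor:fbp} gives that $U_{zz}$ has a finite limit at $\partial_{\overline{\mathcal{O}}}\mathcal{C}$; all that is known is that $U\in C^1(\mathcal{O})$, $U\in C^2$ on each of $\mathrm{Int}\,\mathcal{C}$ and $\mathrm{Int}\,\mathcal{S}$, and $z\mapsto U(r,z)$ is convex. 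From the PDE we may rewrite $\tfrac{\gamma^2 r}{2}U_{rr}=\rho U-\mu U_z-k(\theta-r)U_r-\tfrac{\sigma^2}{2}U_{zz}$, and since $U_{zz}\ge 0$ this only gives $U_{rr}$ bounded \emph{from above} near the boundary, not a finite two-sided limit. Consequently, absolute continuity of $r\mapsto U_r(r,y)$ near the transition point $r_y=g(y)$ -- which is what legitimises writing $U_r(r_2,y)-U_r(r_1,y)=\int_{r_1}^{r_2\wedge g(y)}U_{rr}(\eta,y)\,d\eta$ and then Fubini -- is not justified merely by $U\in C^1$: a continuous function whose classical derivative exists and is unbounded off a single point can still fail to be absolutely continuous. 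The step needs an explicit integrability argument, and as written your proof does not supply one.

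The gap is patchable. Using convexity ($U_{zz}\ge 0$) one has $U_{rr}(\cdot,y)\le M$ on a compact $r$-interval $[r_1,g(y))$; the fundamental theorem of calculus on $[r_1,g(y)-\delta]$ together with continuity of $U_r$ gives that $\int_{r_1}^{g(y)-\delta}U_{rr}(\eta,y)\,d\eta\to -U_r(r_1,y)$ as $\delta\downarrow 0$; then monotone convergence applied to $M-U_{rr}\ge 0$ yields $U_{rr}(\cdot,y)\in L^1$ near $g(y)$, and hence the desired absolute continuity. Note, however, that the paper avoids this issue altogether: instead of establishing pointwise absolute continuity of $U_r(\cdot,y)$, it tests $v_r(\cdot,z)$ against $\varphi\in C_c^\infty$, integrates by parts over $(0,g_\varepsilon(y))$ with $g_\varepsilon(y)=g(y)-\varepsilon$ strictly inside $\mathcal{C}$, substitutes the PDE there, handles the $U_{zz}$-term by Fubini \emph{before} letting $\varepsilon\to 0$, and then passes to the limit by dominated convergence; everything then involves only $U,U_r,U_z$, which are known to be continuous. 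That formulation does not require any bound on $U_{rr}$ (or $U_{zz}$) up to the boundary. Finally, your observation that $\mathds{1}_{\{b_-(r)\ge\alpha\}}\equiv 1$ by Lemma \ref{lem:bnew} (and that $b_-=b$ by Theorem \ref{thm:bC}) is correct and harmless, and consistent with the formula in the statement.
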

\begin{proof}
The main idea in this proof is to compute explicitly the weak derivative $v_{rr}$.

Since $v_r(\cdot,z)$ is a continuous function for all $z > \alpha$, we say that its weak derivative with respect to $r$ is a function $f\in L^1(\mathcal{O})$ such that, for any $\varphi\ge 0$ with $\varphi\in C^\infty_c(\R_+)$, it holds
\[
\int_0^\infty v_r(\eta,z)\varphi'(\eta)d\eta=-\int_0^\infty f(\eta,z)\varphi(\eta)d\eta,\quad \text{for $z\in(\alpha,+\infty)$.}
\]
We denote by $g$ the generalised, right-continuous, inverse of the decreasing function $b$ and, for future frequent use, we also define $g_\eps(\,\cdot\,):=g(\,\cdot\,)-\eps$ for $\eps>0$.

Using that $U_r$ is continuous, with $U_r(\eta,y)=0$ for $\eta\ge g(y)$, and employing Fubini's theorem we can write
\begin{align}\label{eq:c2-1}
\int_0^\infty& v_r(\eta,z)\varphi'(\eta)d\eta\\[+3pt]
=&\int_0^\infty \left(\int_\alpha^z U_r(\eta,y)dy\right)\varphi'(\eta)d\eta=\int_\alpha^z\left(\int_0^\infty U_r(\eta,y)\varphi'(\eta)d\eta\right)dy\notag\\[+3pt]
=&\int_\alpha^z\left(\int_0^{g(y)} U_r(\eta,y)\varphi'(\eta)d\eta\right)dy=\int_\alpha^z\left(\lim_{\eps\to0 }\int_0^{g_\eps(y)} U_r(\eta,y)\varphi'(\eta)d\eta\right)dy\notag\\[+3pt]
=&\lim_{\eps\to0 }\int_\alpha^z\left(\int_0^{g_\eps(y)} U_r(\eta,y)\varphi'(\eta)d\eta\right)dy,\notag
\end{align}
where in the last line we used dominated convergence. We now recall that
\begin{align*}
\tfrac{\gamma^2}{2}r U_{rr}=\rho(r)U-\tfrac{\sigma^2}{2}U_{zz}-\mu U_z-k(\theta-r)U_r
\end{align*}
in $\cC$ and that $U_{rr}$ is continuous away from $\partial\cC$. This implies that for fixed $\eps>0$ we can write (recalling that $\varphi(0)=0$)
\begin{align*}
&\hspace{-10pt}\int_0^{g_\eps(y)} U_r(\eta,y)\varphi'(\eta)d\eta=U_r(g_\eps(y),y)\varphi(g_\eps(y))-\int_0^{g_\eps(y)} U_{rr}(\eta,y)\varphi(\eta)d\eta\\[+3pt]
=&\,U_r(g_\eps(y),y)\varphi(g_\eps(y))\\[+3pt]
&-\frac{2}{\gamma^2}\left(\int_0^{g_\eps(y)}\!\!\eta^{-1}\Big[\rho(\eta)U(\eta,y)\!-\!\tfrac{\sigma^2}{2}U_{zz}(\eta,y)\!-\!\mu U_z(\eta,y)\!-\!k(\theta\!-\!\eta)U_r(\eta,y)\Big]\varphi(\eta)d\eta\right).
\end{align*}
Plugging the latter in \eqref{eq:c2-1} we find
\begin{align}\label{splittedlimit}
&\hspace{-10pt}\lim_{\eps\to0 }\int_\alpha^z\left(\int_0^{g_\eps(y)} U_r(\eta,y)\varphi'(\eta)d\eta\right)dy\\
=&\lim_{\eps\to0 }\int_\alpha^z U_r(g_\eps(y),y)\varphi(g_\eps(y))dy\notag\\
&-\!\lim_{\eps\to0 }\!\int_\alpha^z\!\!\frac{2}{\gamma^2}\!\left(\!\int_0^{g_\eps(y)}\!\eta^{-1}\Big[\rho(\eta)U(\eta,y)\!-\!\mu U_z(\eta,y)\!-\!k(\theta\!-\!\eta)U_r(\eta,y)\Big]\varphi(\eta)d\eta\!\right)\!dy\notag\\
&+\lim_{\eps\to0 }\int_\alpha^z\frac{\sigma^2}{\gamma^2}\left(\int_0^{g_\eps(y)}\eta^{-1}U_{zz}(\eta,y)\varphi(\eta)d\eta\right)dy.\notag
\end{align}
For the first two limits on the right-hand side of \eqref{splittedlimit} we can use dominated convergence and recall that $U_r(g(y),y)=0$ to get
\begin{align}\label{eq:c2-2}
&\hspace{-10pt}\int_0^\infty v_r(\eta,z)\varphi'(\eta)d\eta\\
=&-\int_\alpha^z\frac{2}{\gamma^2}\!\left(\!\int_0^{g(y)}\!\eta^{-1}\Big[\rho(\eta)U(\eta,y)\!-\!\mu U_z(\eta,y)\!-\!k(\theta\!-\!\eta)U_r(\eta,y)\Big]\varphi(\eta)d\eta\right)dy\notag\\
&+\lim_{\eps\to0 }\int_\alpha^z\frac{\sigma^2}{\gamma^2}\left(\int_0^{g_\eps(y)}\eta^{-1}U_{zz}(\eta,y)\varphi(\eta)d\eta\right)dy.\notag
\end{align}
For the remaining term on the right-hand side of \eqref{splittedlimit}, we set $b_\eps(\eta)$ as the generalized inverse of  $g_\eps(\eta)$, use Fubini's theorem and obtain
\begin{align}\label{eq:c2-3}
&\hspace{-10pt}\lim_{\eps\to0 }\int_\alpha^z\frac{\sigma^2}{\gamma^2}\left(\int_0^{g_\eps(y)}\eta^{-1}U_{zz}(\eta,y)\varphi(\eta)d\eta\right)dy\\
=&\frac{\sigma^2}{\gamma^2}\lim_{\eps\to0 }\int_0^{g_\eps(\alpha)}\left(\int_\alpha^{z\wedge b_\eps(\eta)}U_{zz}(\eta,y)dy\right)\eta^{-1}\varphi(\eta)d\eta\notag\\
=&\frac{\sigma^2}{\gamma^2}\int_0^{g(\alpha)}\left(U_{z}(\eta,z\wedge b(\eta))-U_z(\eta,\alpha+)\right)\eta^{-1}\varphi(\eta)d\eta,\notag
\end{align}
where in the last line we also used $b_\eps\to b$ and $g_\eps\to g$. Combining \eqref{eq:c2-2} and \eqref{eq:c2-3}, and using Fubini's theorem once more we find
\begin{align*}
&\hspace{-10pt}\int_0^\infty v_r(\eta,z)\varphi'(\eta)d\eta\\
=&-\int_0^{g(\alpha)}\frac{2}{\gamma^2}\left(\int_\alpha^{b(\eta)\wedge z}\Big[\rho(\eta)U(\eta,y)-\mu U_z(\eta,y)-k(\theta-\eta)U_r(\eta,y)\Big]dy\right)\eta^{-1}\varphi(\eta)d\eta\notag\\
&+\frac{\sigma^2}{\gamma^2}\int_0^{g(\alpha)}\left(U_{z}(\eta,z\wedge b(\eta))-U_z(\eta,\alpha+)\right)\eta^{-1}\varphi(\eta)d\eta=-\int_0^\infty f(\eta,z)\varphi(\eta)d\eta,\notag
\end{align*}
where, noticing that $\{\eta\le g(\alpha)\}=\{b(\eta)\ge \alpha\}$, we have defined
\begin{align*}
f(\eta,z):=&\mathds{1}_{\{b(\eta)\ge \alpha\}}\frac{2}{\gamma^2}\left(\int_\alpha^{b(\eta)\wedge z}\Big[\rho(\eta)U(\eta,y)-\mu U_z(\eta,y)-k(\theta-\eta)U_r(\eta,y)\Big]dy\right)\eta^{-1}\notag\\
&-\mathds{1}_{\{b(\eta)\ge \alpha\}}\frac{\sigma^2}{\gamma^2}\left(U_{z}(\eta,z\wedge b(\eta))-U_z(\eta,\alpha+)\right)\eta^{-1}.
\end{align*}
It follows that $v_{rr}=f$ in the weak sense.
However, it is not hard to verify that $f\in L^\infty_{loc}(\mathcal{O})$ thanks to Proposition \ref{prop:C1} and Proposition \ref{prop:bC}. Hence $v_{rr}\in L^\infty_{loc}(\mathcal{O})$, as claimed.

Finally, notice that since $r \mapsto b(r)$ is nonincreasing and right-continuous, then it has at most countably many jumps for $r \in (0,\infty)$, hence $f(r,z)= \lim_{\varepsilon \downarrow 0}f(r-\varepsilon,z)$ for a.e.\ $r \in (0,\infty)$ (here the null set depends on $z\ge \alpha$). Let also $(r^J_k)_{k\geq 1}$ be the collection of jump points of the free boundary $b$, and set
$$\mathcal{N}:= \bigcup_{k\geq 1}\big([b(r^J_k),\infty)\times\{r^J_k\}\big).$$
Then $f(r,z)= \lim_{\varepsilon \downarrow 0}f(r-\varepsilon,z)$ for $(r,z) \in \mathcal{O} \setminus \mathcal{N}$. Since $\mathcal{N}$ is a subset of $\mathcal{O}$ with null Lebesgue measure, we conclude that \eqref{eq:vrr} holds true.
\end{proof}

In order to use Theorem \ref{thm:verif} we need to show that $v_{rr}$ is continuous as well in the closure $\overline \cC$ of the continuation set $\cC$, and we accomplish that in the next proposition. We remark that global $C^2$ regularity of a solution to \eqref{HJB1} is far from being a trivial result and, in particular, we are not aware of any probabilistic proof of this fact.

\begin{proposition}
\label{prop:C2}
One has that $v_{rr}$ is continuous in $\overline{\mathcal{C}} \cap \mathcal{O}$.
\end{proposition}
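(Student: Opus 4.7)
The plan is to invoke directly the explicit formula for $v_{rr}$ derived in Proposition \ref{prop:vrr} and show that on $\overline{\mathcal{C}}\cap\mathcal{O}$ it reduces to a jointly continuous expression in $(r,z)$.

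First, I would observe that on $\overline{\mathcal{C}}\cap\mathcal{O}$ the characterization of $\cC$ via \eqref{eq:b} together with Lemma \ref{lem:bnew} gives $\alpha<z\le b(r)$. Since $r\mapsto b(r)$ is continuous by Theorem \ref{thm:bC}, we have $b_-(r)=b(r)>\alpha$, so $\mathds{1}_{\{b_-(r)\ge\alpha\}}=1$ and $b_-(r)\wedge z=z$ throughout $\overline{\mathcal{C}}\cap\mathcal{O}$. Plugging this into \eqref{eq:vrr} collapses the formula to
\begin{equation*}
v_{rr}(r,z)=\frac{2}{\gamma^2 r}\int_\alpha^{z}\bigl[\rho(r)U(r,y)-\mu U_z(r,y)-k(\theta-r)U_r(r,y)\bigr]\,dy-\frac{\sigma^2}{\gamma^2 r}\bigl(U_{z}(r,z)-U_z(r,\alpha+)\bigr),
\end{equation*}
valid for all $(r,z)\in\overline{\mathcal{C}}\cap\mathcal{O}$.

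Second, I would argue the continuity of each piece. The factor $r^{-1}$ is continuous on $(0,\infty)$. Assumption \ref{ass:rho} guarantees continuity of $\rho$, Proposition \ref{prop:Ucont} gives continuity of $U$, and Proposition \ref{prop:C1} yields joint continuity of $U_z$ and $U_r$ on $\mathcal{O}$. Hence the integrand in the first term is jointly continuous in $(r,y)$, and standard dominated-convergence arguments (the integrand is locally bounded uniformly in $(r,z)$ on compacts of $\mathcal{O}$ by Propositions \ref{prop:C1} and \ref{prop:U-bound}) ensure that the integral $\int_\alpha^{z}[\,\cdot\,]\,dy$ is continuous in $(r,z)$. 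The term $U_z(r,z)$ is continuous in $(r,z)\in\mathcal{O}$. Finally, by Corollary \ref{cor:elb} we have $U_z(r,\alpha+)=-\lambda U(r,\alpha)$, which is continuous in $r$ by the continuity of $U$ up to the boundary $\{z=\alpha\}$ established in Proposition \ref{prop:Ucont}.

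The only subtle point, and the reason the statement is non-trivial, is the passage through the free boundary: in general $v_{rr}$ may jump at points where the boundary $b$ has a vertical segment, because the indicator and the cut-off $b_-(r)\wedge z$ would then produce discontinuities. The continuity of $b$ (Theorem \ref{thm:bC}) eliminates precisely this pathology on $\overline{\mathcal{C}}\cap\mathcal{O}$, so the simplification above is rigorous and the conclusion follows immediately from the joint continuity of the remaining ingredients.
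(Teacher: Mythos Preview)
Your proof is correct and follows essentially the same approach as the paper: reduce \eqref{eq:vrr} on $\overline{\cC}\cap\mathcal{O}$ by replacing $b_-(r)\wedge z$ with $z$, and then read off continuity from the regularity of $U$, $U_z$, $U_r$ (Proposition \ref{prop:C1}) and of $\rho$. One small remark: your appeal to the continuity of $b$ (Theorem \ref{thm:bC}) is sufficient but not actually needed here---the paper simply observes that $(r,z)\in\overline{\cC}\cap\mathcal{O}$ already forces $z\le b_-(r)$, so the simplification $b_-(r)\wedge z=z$ goes through without knowing whether $b$ jumps.
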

\begin{proof}
It suffices to observe that for any $(r,z) \in \overline{\mathcal{C}} \cap \mathcal{O}$ we have $z \leq b_-(r)$. Hence
\begin{align*}
&v_{rr}(r,z)=\mathds{1}_{\{b_-(r)\ge \alpha\}}\frac{2}{\gamma^2}\left(\int_\alpha^{z}\Big[\rho(r)U(r,y)-\mu U_z(r,y)-k(\theta-r)U_r(r,y)\Big]dy\right)r^{-1}\notag\\
&\hspace{1.5cm}-\mathds{1}_{\{b_-(r)\ge \alpha\}}\frac{\sigma^2}{\gamma^2}\left(U_{z}(r,z)-U_z(r,\alpha+)\right)r^{-1},
\end{align*}
and the claimed continuity follows from Proposition \ref{prop:C1}. Notice that $\mathds{1}_{\{b_-(r)\ge \alpha\}}=1$ for all $r<r_\alpha$, where $r_\alpha:=\sup\{r>0\,:\,b_-(r)>\alpha\}$.
\end{proof}

We conclude this section by proving that indeed $V=v$ and by providing an optimal dividend strategy.
\begin{theorem}
\label{thm:verifico}
Recall $b$ from \eqref{eq:b}, $V$ from \eqref{eq:Vbis} and $v$ from \eqref{eq:v}. Then $V(r,z)=v(r,z)$ for all $(r,z)\in\overline{\mathcal{O}}$ and the process
\begin{align}\label{eq:Oc2}
D^*_t:=\sup_{0\le s\le t}\left[Z^0_s-b(R_s)\right]^+\,,\qquad t\ge0
\end{align}
is an optimal dividend strategy; i.e., for all $(r,z)\in \overline{\mathcal{O}}$ we have
\[
v(r,z)=V(r,z)=\E_{r,z}\left[\int_{0-}^{\tau_\alpha^{D^*}}e^{-\int_0^t\rho(R_t)dt}dD^*_t\right].
\]
\end{theorem}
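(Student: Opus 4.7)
The plan is to apply the verification Theorem \ref{thm:verif} to the pair $(v,b)$, where $b$ is the free boundary of the stopping problem defined in \eqref{eq:b} and $v$ is given by \eqref{eq:v}. Property (i) of Theorem \ref{thm:verif}---right-continuity (in fact continuity) and monotonicity of $b$, together with $\alpha<b(r)<\infty$ for $r>0$---is already at hand from Lemma \ref{lem:bnew}, Theorem \ref{thm:bC} and Proposition \ref{prop:bC}-(i). For property (ii), I would differentiate \eqref{eq:v} under the integral sign to obtain $v_z=U$, $v_{zz}=U_z$, $v_r(r,z)=\int_\alpha^z U_r(r,y)\,dy$ and $v_{rz}=U_r$; Propositions \ref{prop:Ucont} and \ref{prop:C1} then directly give $v\in C^1(\mathcal{O})\cap C(\overline{\mathcal{O}})$ with $v_{zz},v_{rz}\in C(\mathcal{O})$. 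The weak derivative $v_{rr}$ belongs to $L^\infty_{loc}(\mathcal{O})$ by Proposition \ref{prop:vrr}, and since $\cI=\{v_z>1\}=\{U>1\}=\cC$, its continuity on $\overline{\cI}\cap\mathcal{O}$ follows from Proposition \ref{prop:C2}.

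The bulk of the work is the verification of (iii). The conditions on $v_z$ and $v(r,\alpha)=0$ are immediate from $v_z=U$, the definitions of $\cC$ and $\cS$, and \eqref{eq:v}. For the PDE, my plan is to substitute the explicit representation of $v_{rr}$ from Proposition \ref{prop:vrr} into $\cL v-\rho v$, together with $v_z=U$, $v_{zz}=U_z$ and $v_r(r,z)=\int_\alpha^z U_r(r,y)\,dy$. In the continuation region $\{\alpha<z<b(r)\}$ the interior terms telescope (essentially reproducing $\int_\alpha^z(\cL U-\rho U)\,dy=0$), and after cancellation the only surviving contribution is a boundary term at $y=\alpha$ of the form $\tfrac{\sigma^2}{2}U_z(r,\alpha+)+\mu U(r,\alpha)$, which vanishes thanks to the elastic boundary relation $U_z(r,\alpha+)=-\lambda U(r,\alpha)$ from Corollary \ref{cor:elb} (recall $\lambda=2\mu/\sigma^2$). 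Running the analogous computation for $z>b(r)$---where $v_{zz}=0$, $v_z=1$, and $U_r\equiv 0$ beyond $b(r)$---and exploiting smooth fit $U_z(r,b(r))=0$ (which follows from $U\in C^1(\mathcal{O})$ and $U\equiv 1$ on $\cS$), I expect to obtain $\cL v-\rho v=-\rho(r)(z-b(r))\le 0$, completing (iii) a.e.\ on $\mathcal{O}$.

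Finally, the linear growth bound $v(r,z)\le h_0(z-\alpha)$ follows at once from $U\le h_0$ in Proposition \ref{prop:U-bound}. Theorem \ref{thm:verif} then yields $v=V$ on $\overline{\mathcal{O}}$ and identifies the reflecting strategy \eqref{eq:Oc} with $a=b$---which coincides with $D^*$ in \eqref{eq:Oc2}---as optimal. The main obstacle is the PDE verification, where one must simultaneously exploit the delicate representation of $v_{rr}$ from Proposition \ref{prop:vrr}, the elastic boundary behaviour at $z=\alpha$ from Corollary \ref{cor:elb}, and the smooth-fit condition at $z=b(r)$; each of these ingredients is non-trivial and hinges on the regularity analysis developed throughout Section \ref{sec:OS}.
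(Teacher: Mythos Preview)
Your proposal is correct and follows essentially the same approach as the paper's own proof: both verify the hypotheses of Theorem \ref{thm:verif} for the pair $(v,b)$, invoking the regularity results of Section \ref{sec:OS} and Propositions \ref{prop:vrr}--\ref{prop:C2} for (ii), the elastic boundary condition from Corollary \ref{cor:elb} for the PDE, and the bound from Proposition \ref{prop:U-bound} for linear growth. The only cosmetic difference is that the paper writes the PDE verification as $\int_\alpha^z(\cL-\rho)U\,dy=(\cL-\rho)v$ directly, whereas you substitute the explicit formula for $v_{rr}$; your computation in the stopping region yielding $(\cL-\rho)v=-\rho(r)(z-b(r))$ is in fact more explicit than the paper's ``repeating the same calculations''.
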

\begin{proof}
It suffices to check that $v$ of \eqref{eq:v} satisfies all the conditions in Theorem \ref{thm:verif}. The function $v$ is continuous everywhere. Moreover, by Proposition \ref{prop:C1}, $v_z$, $v_{zz}$, $v_r$ and $v_{zr}$ are continuous on $\mathcal{O}$, and, by Proposition \ref{prop:C2}, $v_{rr}$ is continuous in $\overline{\mathcal{C}} \cap \mathcal{O}$.

Since $U\geq 1$ we have that $v_z \geq 1$, with equality for $z \geq b(r)$, $r>0$. Moreover, by \eqref{eq:Uz} we see that $v_{zz}=U_z < 0$ for all $(r,z) \in \mathcal{O}$ such that $\alpha \leq z < b(r)$. Hence $v_z > 1$ for such values of $(z,r)$ because $v_z(r,b(r))=U(r,b(r))=1$. Also, $0 \leq v(r,z) \leq h_0 (z-\alpha)$ for any $(r,z) \in \mathcal{O}$ due to \eqref{eq:U-bound}.

For $r \in \R_+$ and $\alpha < z < b(r)$ we have by Corollary \ref{cor:fbp} and the dominated convergence therorem that
\begin{align*}
 0 =& \int_{\alpha}^{z} \big(\cL - \rho(r)\big)U(r,y)dy \\
 =& \frac{1}{2}\sigma^2 v_{zz}(r,z) + \mu v_z(r,z) - \big(\frac{1}{2}\sigma^2 v_{zz}(r,\alpha+) + \mu v_z(r,\alpha+)\big) \\
&  + \frac{1}{2}\gamma^2 r v_{rr}(r,z) + \kappa(\theta - r)v_r(r,z) - \rho(r)v(r,z) = \big(\cL - \rho(r)\big)v(r,z),
\end{align*}
upon observing that $\frac{1}{2}\sigma^2 v_{zz}(r,\alpha+) + \mu v_z(r,\alpha+)=0$ by Corollary \ref{cor:elb}. Repeating the same calculations for $z > b(r)$, $r >0$, we find that $\big(\cL - \rho(r)\big)v(r,z) \leq 0$. Hence, $\big(\cL - \rho(r)\big)v(r,z) \leq 0$ for a.e.\ $(r,z) \in \mathcal{O}$.

Therefore we have verified all the conditions in \eqref{HJB1}, and it thus follows that $v=V$ and $D^*\equiv D^b$ is optimal.
\end{proof}


\section{Concluding remarks}
\label{sec:finalrem}

\subsection{Some Comments on the Optimal Dividend Policy}
\label{rem:secfinal}

The optimal control from \eqref{eq:Oc2} prescribes to pay dividends in such a way to keep the surplus process below the stochastic threshold $t\mapsto b(R_t)$ at all times. In particular, the company distributes the minimum amount of dividends that prevents the current surplus level from exceeding the current optimal ceiling $b(R_t)$. Any excess of the surplus is paid as a lump sum.
Figure \ref{fig:1} below provides an illustration of the curve $r\mapsto b(r)$, of the process $(Z,R)$, and of the optimal dividend payout {(we refer to Section \ref{sec:num} for the numerical evaluation of the free boundary for some specific choices of the discount rate)}. The optimal dividends distribution is therefore of {\em barrier type} but, differently to classical models with constant discount rate and constant optimal barrier (see, e.g., Section 2.5.2 in Chapter 2 of \cite{Schmidli}), here we observe dynamic (stochastic) adjustments of the barrier. This strategy shows how the firm's manager responds to the fluctuations of the spot rate and allows to draw some economic/financial conclusions in a dynamic (random) macro-economic set-up. In particular, since the free boundary $b$ is a decreasing function, we observe that in scenarios where the interest rate tends to increase, the firm manager will pay dividends more frequently because the expected present value of future dividend payments decays. Of course this behaviour also increases the probability of an early insolvency of the firm since in our model the growth rate of the surplus process is constant and independent of the current spot rate on the market. Despite this general trend, we also observe that no matter how large the spot rate, an immediate liquidation of the firm can never be optimal (final claim in Lemma \ref{lem:bnew}). The combined uncertainty on the future moves of the spot rate and the surplus process indeed encourage gradual liquidation in light of a possible reversion of the spot rate towards lower values and/or upwards excursions of the surplus process.

\begin{figure}
  \includegraphics[width=0.9\linewidth]{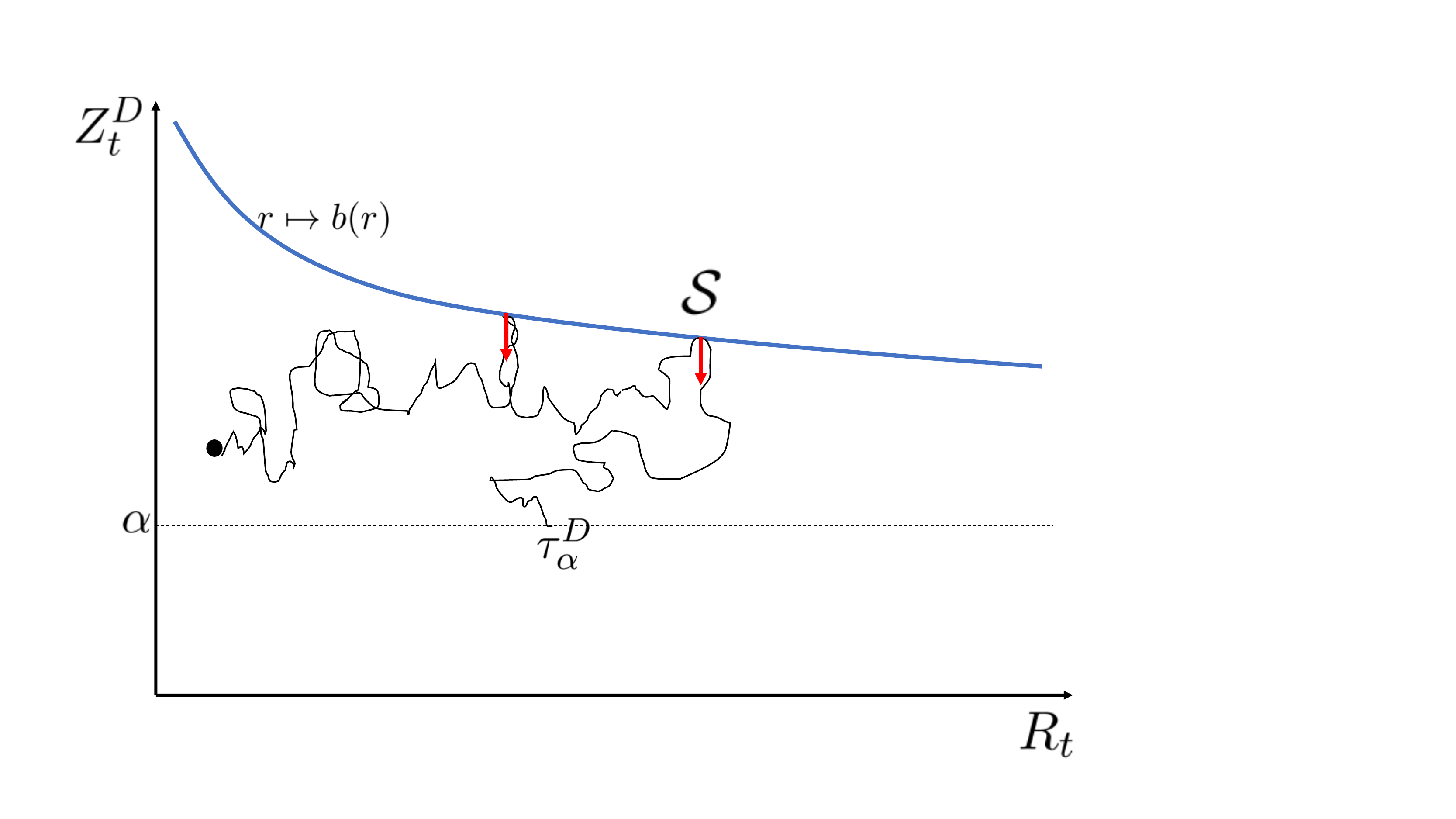}
  \caption{An illustrative drawing of the free boundary $r \mapsto b(r)$ and of the optimal dividend payout. The red arrows illustrate the vertical push that is needed to keep the surplus process below the interest-rate dependent boundary $b$. {\color{black}{In particular, the optimal dividend process defines a continuous measure
$t\mapsto dD^*_t$ on $\R_+$
which is completely singular}}
{with respect to
the Lebesgue measure.}}
  \label{fig:1}
\end{figure}

{\color{black}{
If the discount rate is such that $\rho(r) \geq \rho_0$ for some constant $\rho_0>0$ (e.g., it is of linear form $\rho(r)=\rho_0 + r$), one easily obtains from \eqref{eq:Vbis} that the value function with interest-rate dependent discount force is smaller than the one with $\rho(r) \equiv \rho_0$. However, we also see that if $\rho(r) \geq \rho_0$, then the interest-rate dependent barrier $b$ is uniformly bounded from above by the constant free boundary $z^{\star}_{\rho_0}$ arising in the problem with constant discount rate $\rho_0$ (Proposition \ref{prop:bC}-(ii)). {Continuity of the boundary $r\mapsto b(r)$ implies that} optimal lump sum payments can happen only at the initial time {with $D^*_0=(z-b(r))^+$}. It thus follows that {lump sum payments} are larger than those in the problem with constant discount rate, i.e.\ $(z-z^{\star}_{\rho_0})^+$. Moreover, according to Proposition \ref{prop:bC}-(iii), we see that in the linear case $\rho(r)=\rho_0 + r$ the size of the lump sum payments increases with the value of the interest rate (and indeed it attains its maximum when $r\uparrow \infty$, {with $D^*_0=(z-\alpha)^+$}). {This is in contrast with the case of constant interest rate $\rho(r)=\rho_0$, where $D^*_0=(z-z^\star_{\rho_0})^+$}.}}


\subsection{{\color{black}{Numerical Illustrations}}}
\label{sec:num}

{\color{black}{{In this section we outline a simple numerical method that allows to compute the free boundary $b$ via the PDE associated to the value function $U$ of the optimal stopping problem \eqref{eq:U2}. A direct study of the PDE for its value function $V$ (cf.\ \eqref{eq:Vbis}) is possible in principle but more involved because the gradient constraint $V_z\ge 1$ is harder to implement than the obstacle constraint $U\ge 1$. While the study of an optimised numerical scheme is outside the scope of our paper, the results in this section show that the connection to optimal stopping also provides useful tools for numerical solution of the original singular control problem. }

{We consider the two cases when} $\rho(r)= r_0 + r$ and $\rho(r) = \sqrt{r_0 + r}$, for $r_0=0.05$ ({notice that Assumption \ref{ass:rho} is satisfied).} The parameters' values are:
\[
\alpha=0,\quad \sigma = 1,\quad \mu = 1,\quad \theta = 0.15,\quad \kappa = 0.5,\quad \gamma=0.3,
\]
{with respect to a time unit of one year (these are for illustrative purpose only and we leave the question of calibration with real market data for future work).}

The free boundary {is} determined as {the boundary of the level} set at $1$ of the {function} $U$. {The function $U$} {is approximated numerically} by {the solution} {of a penalised PDE problem over the truncated domain $\mathcal{O}_{\textrm{Num}}:=(r_{\text{min}},r_{\text{max}}) \times (0,z_{\text{max}})$, where $r_{\text{min}}=0.005$, $r_{\text{max}}=1.1$ and $z_{\text{max}}=2.5$ are chosen arbitrarily. {\color{black}{In our experiments some care is needed for the choice of $r_{\text{min}}$ since $\{0\}$ is non-attainable for the spot rate $R$.}}

Given $\delta=0.01$ we use the software Mathematica's command \texttt{NDsolve} to solve the following penalised problem}:
\begin{align}
&\label{eq:penal}\cL U(r,z)-\rho(r)U(r,z)=\tfrac{1}{\delta}\big(1 - U(r,z)\big)^+,\qquad (r,z)\in \mathcal{O}_{\textrm{Num}}
\end{align}
{with Neumann boundary condition (cf.\ Corollary \ref{cor:elb})
\[
U_z(r,0+)=-\lambda U(r,0+),\qquad r\in(r_{\text{min}}, r_{\text{max}}),
\]
and Dirichlet conditions
\begin{align}\label{Diric}
U(r,z_{\text{max}}) =1 = U(r_{\text{max}},z)\qquad\text{and}\qquad U(0+,z) = \Big(1 - \tfrac{1}{1 + z}\Big) u(z).
\end{align}}
Here, {$u(z)=V^0_z(x)$} is the derivative of the value function {$V^0$} of the optimal dividend problem with constant interest rate $\rho(0)>0$ ({recall that in our case $\rho(0)$ equals either $r_0$ or $\sqrt{r_0}$}), {whose explicit formula can be found in (cf.\ eq.\ (3.3) in \cite{LOKKA})}.

{This system of equations can be justified as follows:
\begin{itemize}
\item[(i)] The penalisation procedure is standard when solving variational inequalities arising in optimal stopping (see, e.g., \cite{BL}). One can show that as $\delta\downarrow 0$ the solution of \eqref{eq:penal} converges to the true value function $U$ uniformly on compacts (provided of course that $U$ is sufficiently regular, as in our case). The advantage of solving \eqref{eq:penal} numerically instead of the free boundary problem in Corollary \ref{cor:fbp} is that the domain in \eqref{eq:penal} does not need to be determined as part of the solution.
\item[(ii)] The first condition in \eqref{Diric}, i.e., $U(r_{\text{max}},z)=U(r,z_{\text{max}})=1$, is justified by noticing that the optimal boundary $b(r)$ is bounded (cf.\ Proposition \ref{prop:bC}-(ii)) and converges to $\alpha=0$ as $r \uparrow \infty$ (cf.\ Proposition \ref{prop:bC}-(iii)). So, for large values of $r$ and/or $z$ we expect to be in the stopping region.
\item[(iii)] The second condition in \eqref{Diric} is the most delicate, since $\{0\}$ is not attainable by $R$ and so in theory there is no need for a boundary condition. Numerically, however, such a condition is needed. Here we use that $U(r,z) \leq u(z)$ for any $(r,z)$, and that, theoretically we expect $U(0,z)\approx u(z)$ for large values of $z$.
\end{itemize}
}

Drawings of the optimal stopping boundaries are presented in Figure \ref{fig:2}. The {boundary of the black region is the one obtained for $\rho(r)=r_0 + r$, whereas the boundary of the grey area is the one obtained for $\rho(r)=\sqrt{r_0 + r}$}. {For completeness we also plot the value function $U$ of the optimal stopping problems}.

\begin{figure}
  \includegraphics[width=0.6\linewidth]{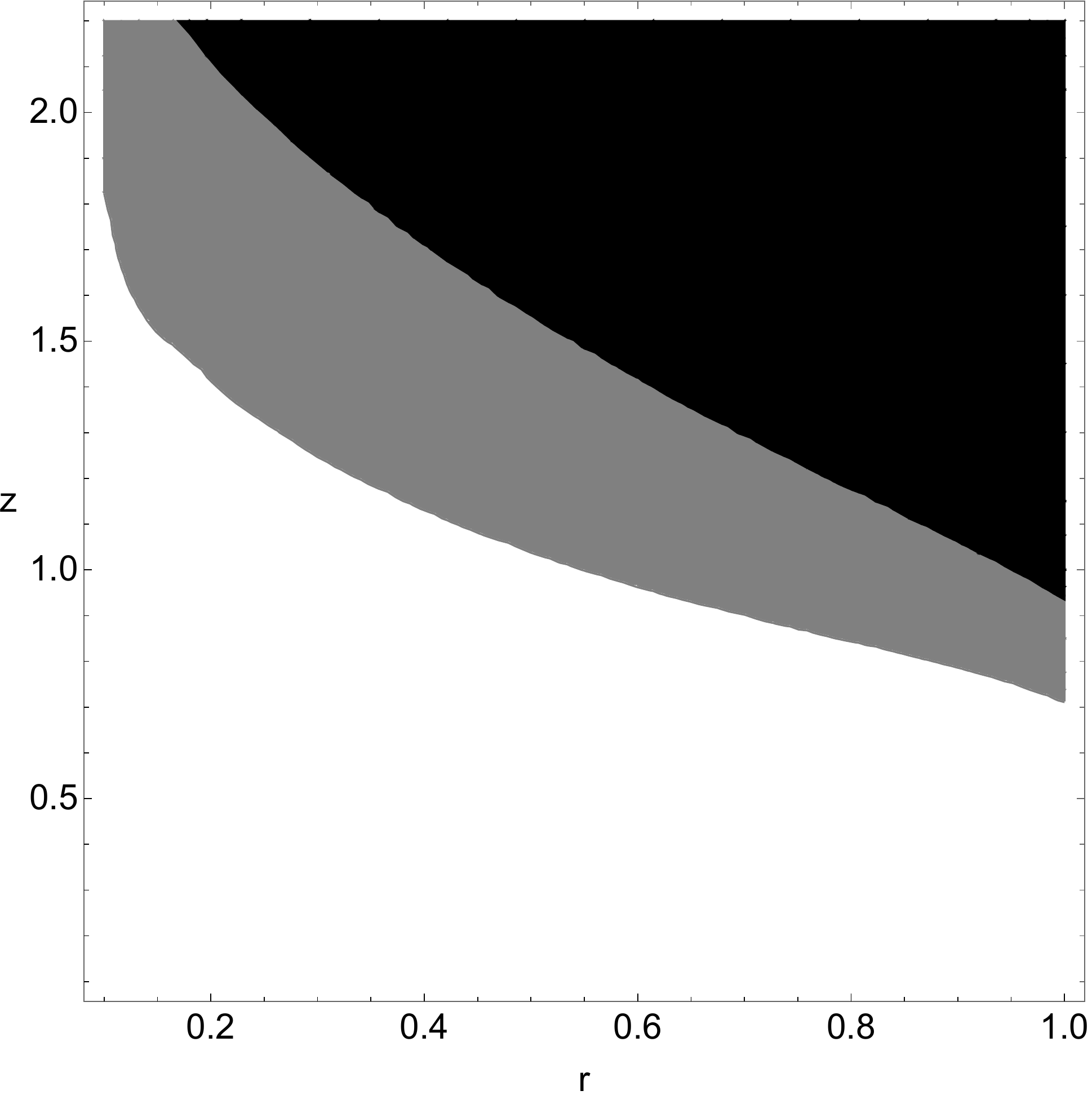}
  \caption{{Plots of the optimal stopping boundary in the case $\rho(r)=r_0 + r$ (boundary of the black area) and $\rho(r)=\sqrt{r_0 + r}$ (boundary of the grey area). Continuation regions lie below the boundaries, while} {stopping regions above the boundaries.}}
  \label{fig:2}
\end{figure}

\begin{figure}
  \includegraphics[width=0.7\linewidth]{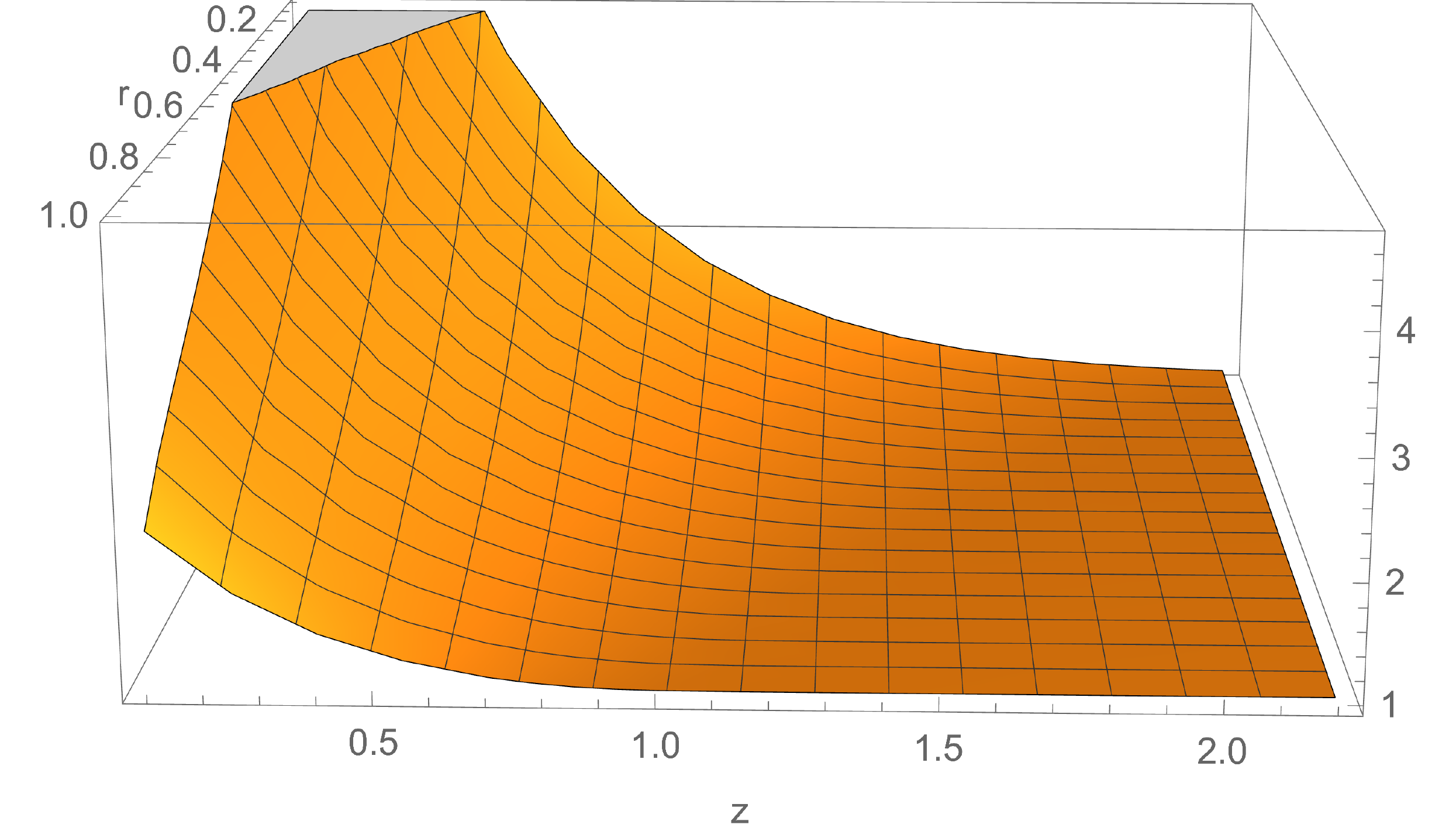}
  \caption{{Plot of $U$ in the case $\rho(r)=r_0 + r$.}}
  \label{fig:3}
\end{figure}

\begin{figure}
  \includegraphics[width=0.7\linewidth]{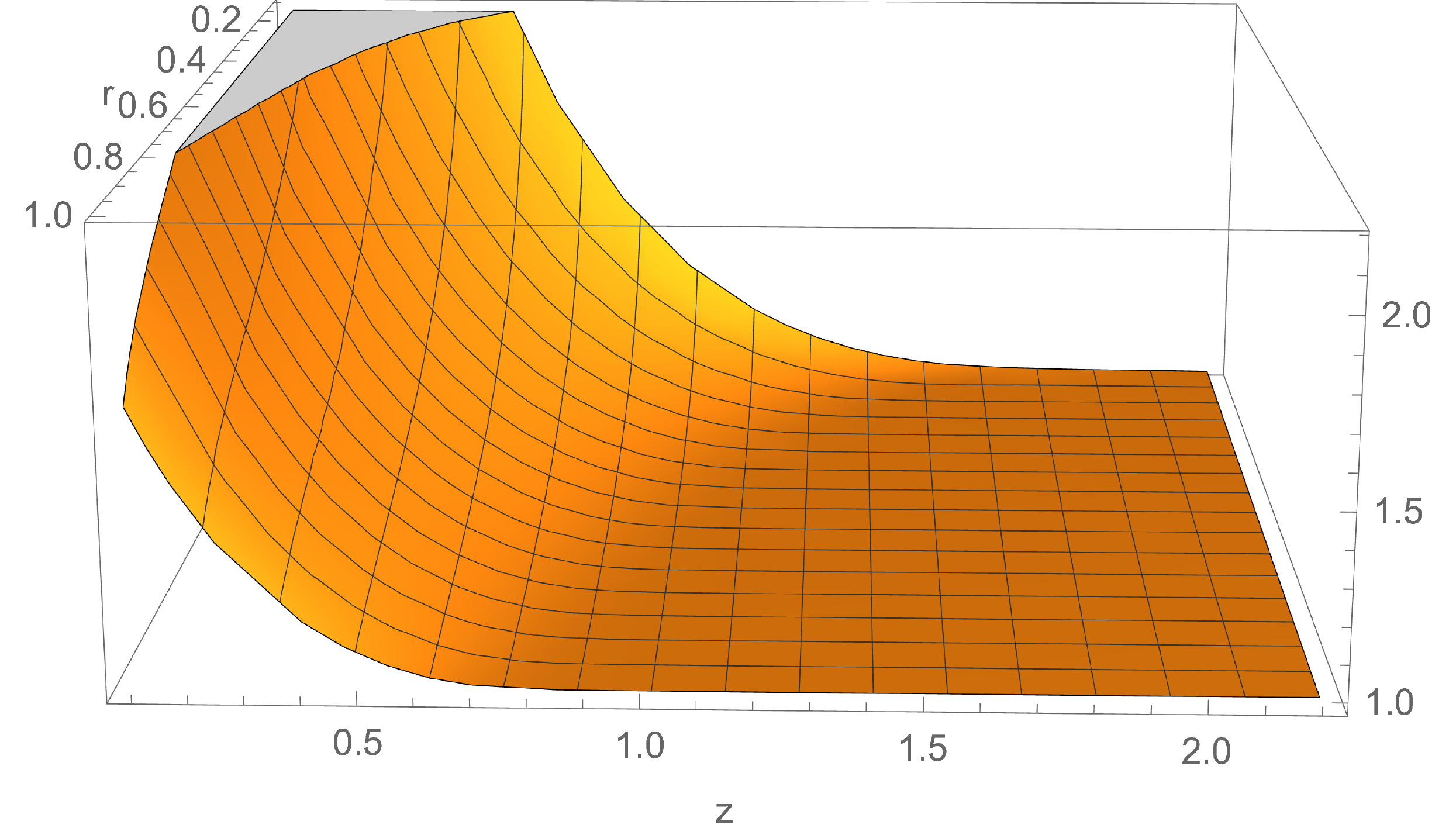}
  \caption{{Plot of $U$ in the case $\rho(r)=\sqrt{r_0 + r}$.}}
  \label{fig:4}
\end{figure}

In Figure \ref{fig:2} we observe that the optimal boundary related to $\rho(r)=\sqrt{r_0 + r}$ is smaller than that related to $\rho(r)=r_0 + r$. {Intuitively,} because of its mean-reverting behaviour, the interest rate process $R$ oscillates around $\theta=0.15$ for all times with large probability. So $\sqrt{r_0 + R_t} \approx 0.2$ (being $r_0=0.05$), with fluctuations of order $\sqrt{R_t}\approx 0.45$. As a consequence, $\sqrt{r_0 + R_t} \gtrsim r_0 + R_t$, which implies that the value function $U$ {with linear discount rate} is larger than {the one discounted with $\rho(r)=\sqrt{r_0+r}$} (see Figures \ref{fig:3} and \ref{fig:4}). This fact in turn yields the ordering between the free boundaries observed in Figure \ref{fig:2}.

From Figure \ref{fig:2} we also notice that the optimal boundary {obtained for} $\rho(r)=\sqrt{r_0 + r}$ {seems more convex than its counterpart in the case of linear discount rate}, in a right neighborhood of $r=0$. {This is due to the fact that when $r=r_{\text{min}}$ the two value functions in Figure \ref{fig:3} and \ref{fig:4} take the same value (cf.\ \eqref{Diric}) but a small increment in $r$ affects the discount rate $\rho(r)=\sqrt{r_0+r}$ more than in the linear case, hence causing a faster drop in the corresponding value function.}}} {\color{black}{We finally notice that employing eq.\ (3.6) in \cite{LOKKA}, among others, the free boundaries associated to the optimal dividend problems with constant discount rate $\rho(r)=r_0$ and $\rho(r)=\sqrt{r_0}$ can be explicitly evaluated. In particular, for our parameter choice they assume values $3.56$ and $1.98$, respectively.}}


\subsection{\color{black}On the Case of Correlated Brownian Motions}
\label{sec:corrBM}
{\color{black}{
Throughout this paper we have assumed that $W$ and $B$ are independent. {Here we provide the heuristic connection between the dividend problem and an optimal stopping problem when $W$ and $B$ are correlated (see also \cite{DeA}). The connection used in Section \ref{sec:OS} will then follow as a special case. We do remark however that the stopping problem obtained for correlated Brownian motions is structurally more involved than the one we solved in this paper. A complete study requires different tools and it is left for future work (more details are presented at the end). }

{Recall the dynamics for $(R,Z^D)$ given by \eqref{eq:R} and \eqref{eq:Z} and assume}
$\E[B_t W_t]= \beta t$, for some $\beta \in (-1,1)$. The infinitesimal generator $\cL$ of the pair $(R, Z^0)$ is then defined by its action on twice-continuously differentiable functions $f$ as
\begin{align}\label{eq:Lcorr}
\cL f :=\frac{1}{2} \,\sigma^2\, f_{zz} + \beta \sigma \gamma \sqrt{r} \,f_{rz}+ \frac{1}{2} \,\gamma^2\,r\, f_{rr} + \mu\, f_z + k(\theta- r)\, f_r,
\end{align}
and the HJB equation for the dividend problem reads as in \eqref{HJB1}, but with $\cL$ given now by \eqref{eq:Lcorr}.

Letting $z \downarrow \alpha$ in the second equation of \eqref{HJB1}, {assuming} that $[0,\,\infty) \times \{\alpha\}$ belongs to the inaction set, we get
\begin{align}\label{eq:elas}
\tfrac{1}{2} \,\sigma^2\, v_{zz}(r, \alpha +) + \mu\, v_z(r, \alpha +) + \beta \sigma \gamma \sqrt{r} \,v_{rz}(r, \alpha +)=0,
\end{align}
{using the fact that $v(r,\alpha)=0$ should imply $v_r(r,\alpha)=v_{rr}(r,\alpha)=0$ for sufficiently smooth $v$.}
{Setting $u:= v_z$ and differentiating the second equation in \eqref{HJB1}}, we find
\begin{align} \label{HJBcorr}
	\left\{
	\begin{array}{ll}
	\cL u(r,z) - \rho(r)\, u(r,z)=0,\quad &\textup{on}\,\,\{u >1\}\\[+3pt]
	u(r,z) \geq 1,\quad & {\text{a.e.}\:(r,z) \in \mathcal{O}}
	\\[+3pt]
 u_{z}(r, \alpha +) + \frac{2\beta  \gamma}{\sigma} \sqrt{r} \,u_{r}(r, \alpha +) + \frac{2\mu}{\sigma^2}\, u(r, \alpha +) =0, \quad  &r \geq 0,
	\end{array}
	\right.
	\end{align}
{where the final equation is \eqref{eq:elas}. A further condition of the form
\begin{align}\label{eq:Lineq}
\cL u(r,z) - \rho(r)\, u(r,z)\leq 0,\quad  {\text{a.e.}\:(r,z) \in \mathcal{O}}
\end{align}
should appear in variational problems related to optimal stopping. While this cannot be derived directly from \eqref{HJB1}, we may equally} expect that the variational problem {for $u$ be} related to the optimal stopping problem
\begin{align}
\label{eq:Ucorr}
\widehat{U}(r,z) := \sup_{\tau \geq 0} \E_{r,z}\left[ e^{\frac{2 \mu}{\sigma^2}\,\ell^\alpha_\tau - \int_0^\tau \rho(\widehat{R}_s) \,ds}\right],\quad (r,z)\in\overline{\mathcal{O}},
\end{align}
{where (recall \eqref{K}),
\[
K^z_t=z-Y_t+\ell^\alpha_t,\quad Y_t=-\mu t+\sigma B_t\quad\text{and}\quad \ell^\alpha_t:=(z-\alpha)\vee S_t-(z-\alpha),
\]
so that $(K_t)_{t\ge 0}$ is a Brownian motion with drift $\mu$ and diffusion $\sigma$, starting at $z\geq\alpha$ and reflected at $\alpha$ (see \cite{P06});}
{instead, the dynamics of the process $\widehat{R}$ reads}
\begin{align*}
d \widehat{R}_t = k(\theta - \widehat{R}_t) dt + \gamma \sqrt{\widehat{R}_t}\,dW_t + \frac{2 \beta \gamma}{\sigma}\sqrt{\widehat{R}_t}\,d\ell^\alpha_t, \quad \widehat{R}_{0}=r,
\end{align*}
{where $W$ is also a Brownian motion and $\E[B_t W_t]=\beta t$ as before.}

In order to {clarify why we expect $u=\widehat U$},
assume $u \in C^2(\bar {\mathcal O})$ be a solution to \eqref{HJBcorr} {with the additional condition \eqref{eq:Lineq}}.
Applying Dynkin's formula to
\[
e^{\frac{2 \mu}{\sigma^2}\,\ell^\alpha_t - \int_0^t \rho(\widehat{R}_s) \,ds} u(\widehat{R}_t, Z_t)
\]
on the {random} interval $[0,\tau]$,
{we get
\begin{align*}
	u(r,z) =&\,\E_{r,z}\left[ e^{\frac{2 \mu}{\sigma^2}\,\ell^\alpha_\tau - \int_0^\tau \rho(\widehat{R}_s) \,ds}u(\widehat{R}_\tau, Z_\tau)-\int_0^\tau e^{\frac{2 \mu}{\sigma^2}\,\ell^\alpha_t - \int_0^t \rho(\widehat{R}_s) \,ds}\big(\cL u-\rho u\big)(\widehat R_t,K_t)dt\right]\\
&+\E_{r,z}\left[\int_0^\tau e^{\frac{2 \mu}{\sigma^2}\,\ell^\alpha_t - \int_0^t \rho(\widehat{R}_s) \,ds}\Gamma(\widehat R_t,K_t)d\ell^\alpha_t\right],		
\end{align*}
where $\Gamma(r,z):=u_{z}(r, z) + \frac{2\beta  \gamma}{\sigma} \sqrt{r} \,u_{r}(r, z) + \frac{2\mu}{\sigma^2}\, u(r, z)$. Then, using that $\cL u-\rho u\le 0$ and that $\Gamma(\widehat R_t,K_t)d\ell^\alpha_t=\Gamma(\widehat R_t,\alpha)d\ell^\alpha_t=0$ we obtain}
\begin{align*}
u(r,z)
&\geq \E_{r,z}\left[ e^{\frac{2 \mu}{\sigma^2}\,\ell^\alpha_\tau - \int_0^\tau \rho(\widehat{R}_s) \,ds}u(\widehat{R}_t, Z_t)\right]\\
&\geq \E_{r,z}\left[ e^{\frac{2 \mu}{\sigma^2}\,\ell^\alpha_\tau - \int_0^\tau \rho(\widehat{R}_s) \,ds}\right]
\end{align*}
for any  stopping time $\tau$. Therefore, $u \geq \widehat{U}$. Finally, by the second and fourth formula in \eqref{HJBcorr}, the above inequalities become equalities if we choose
\[
\tau=\inf\{t\ge 0: u(\widehat R_t,K_t)=1\}
\]
{and provided that $\P_{r,z}(\tau<\infty)=1$ and suitable transversality conditions hold. Thus $\widehat U=u$.}

{If $\beta=0$ we fall back into our original setting from Section \ref{sec:OS}, where $\widehat R=R$ and $R$ is independent of $K$ (then also $\widehat U=U$ as in \eqref{eq:U2}). Such independence of the two processes is useful to establish integrability and monotonicity properties of the value function $U$, which instead are no longer guaranteed when $\beta\neq 0$ (the main difficulty is due to $\ell^\alpha$ appearing also in the dynamics of of the discount rate). Therefore, a study of the problem in full generality requires different methods to the one we use in this paper and it is left for future work.}
}}



\medskip

\indent \textbf{Acknowledgments.}
 {\color{black}{We wish to thank two anonymous referees for their pertinent and useful comments on a first version of this work.}} Tiziano De Angelis gratefully acknowledges partial supported by EPSRC grant EP/R021201/1. Financial support by the German Research Foundation (DFG) through the Collaborative Research Centre 1283 is gratefully acknowledged by Giorgio Ferrari. Elena Bandini gratefully acknowledges the partial support of INdAM - GNAMPA Project 2018. Elena Bandini and Fausto Gozzi gratefully acknowledge the partial support of PRIN 2015/2016 ``Deterministic and Stochastic Evolution Equations''. This work was initiated during a visit of T.\ De Angelis and G.\ Ferrari at LUISS Guido Carli. Both authors are grateful for the hospitality and the financial support offered by LUISS Guido Carli.


\end{document}